\newtheorem{theorem}{Theorem}[section]
\newtheorem{lemma}[theorem]{Lemma}
\newtheorem{proposition}[theorem]{Proposition}
\theoremstyle{definition}
\newtheorem{example}[theorem]{Example}
\newcommand{\ep}{\varepsilon}
\newcommand{\N}{\mathbb{N}}
\newcommand{\R}{\mathbb{R}}
\newcommand{\Z}{\mathbb{Z}}
\newcommand{\T}{\mathcal{T}}
\newcommand{\oph}{ \operatorname{Op_h} }
\newcommand{\cci}{C_c^\infty}
\newcommand{\lapm}{\Delta_{g}}
\newcommand{\wfs}{\operatorname{WF_h}}
\newcommand{\ellp}{\operatorname{ell}_h}
\newcommand{\esssupp}{\operatorname{ess-supp}}
\newcommand{\gh}{\Gamma_H}
\newcommand{\ssm}{{S^*M}}
\newcommand{\ct}{\tilde{\chi}}
\newcommand{\nua}{\nu^A}
\newcommand{\mua}{\mu^A}
\newcommand{\laa}{\lambda^A}
\newcommand{\sa}{{\Sigma^A}}
\newcommand{\hprho}{|\partial_{\bar{\xi}_1}p(\rho_0)|}
\newcommand{\xb}{\bar{x}}
\newcommand{\xib}{\bar{\xi}}
\newcommand{\xt}{\tilde{x}}
\newcommand{\xh}{\hat{x}}
\newcommand{\xit}{\tilde{\xi}}
\newcommand{\supp}{\operatorname{supp}}
\newcommand{\vol}{\operatorname{vol}}
\newcommand{\re}{\operatorname{Re}}
\definecolor{plum}{rgb}{1.0, 0.0, 1.0}
\definecolor{purple}{RGB}{176, 38, 255} 
\begin{document}
\title{On the Growth of Generalized Fourier Coefficients of Restricted Eigenfunctions}

\author{Madelyne M. Brown}

\address{Department of Mathematics, University of North Carolina at Chapel Hill, Chapel Hill, North Carolina, 27599}
\email{madelyne@live.unc.edu}

\begin{abstract} Let $(M,g)$ be a smooth, compact, Riemannian manifold and $\{\phi_h\}$ a sequence of  $L^2$-normalized Laplace eigenfunctions on $M$. For a smooth submanifold $H\subset M$, we consider the growth of the restricted eigenfunctions $\phi_h|_H$ by testing them against a sequence of functions $\{\psi_h\}$ on $H$ whose wavefront set avoids $S^*H$. That is, we study what we call the generalized Fourier coefficients: $\langle \phi_h,\psi_h\rangle_{L^2(H)}$. We give an explicit bound on these coefficients depending on how the defect measures for the two collections of functions $\phi_h$ and $\psi_h$ relate. This allows us to get a little$-o$ improvement whenever the collection of recurrent directions over the wavefront set of $\psi_h$ is small.
To obtain our estimates, we utilize geodesic beam techniques.
\end{abstract}

\maketitle

\section{Introduction and Main Results}
On a smooth, compact, $n$-dimensional Riemannian manifold $(M,g)$, we consider a sequence of $L^2$-normalized Laplace eigenfunctions $\{\phi_h\}$ satisfying
\begin{equation} \label{eqn:efcn}
(-h^2 \lapm-1) \phi_h=0 \qquad \text{and} \qquad \| \phi_h\|_{L^2(M)}=1.
\end{equation}
From a quantum mechanics perspective, we can think of $\phi_h(x)$ as the wave function for a free quantum particle with fixed energy $h^{-2}$. Thus $|\phi_h(x)|^2$ gives the probability density for finding the quantum particle at $x\in M$. Understanding how these high-energy particles behave, corresponding to sending $h\to 0^+$, is a well-studied problem in mathematical physics. We are particularly interested in exploring how $\phi_h$, on average, concentrates and grows on our manifold. 

In this article, we study the generalized Fourier coefficients of $\phi_h$ restricted to a smooth, closed submanifold $H$. The Fourier expansion allows one to express $\phi_h |_H$ in terms of any complete orthonormal basis of $L^2(H)$. It is well known Laplace eigenfunctions on $H$ can be used to build such a basis of $L^2(H)$. Particularly, there exists such an orthonormal basis consisting of eigenfunctions on $H$, $\{\psi_{h_j}\}_{j\in\N}$, which satisfy
\[
-h_j^2\Delta_{g_{_H}} \psi_{h_j}=E(h_j) \psi_{h_j}  
\]
where $g_{_H}$ is the Riemannian metric on $H$ induced by $g$. Thus we can express
\begin{equation} \label{eqn:four} \phi_h|_H=\sum_{j\in\N} \langle \phi_h|_H,\psi_{h_j} \rangle_{L^2(H)} \psi_{h_j} =  \sum_{j\in\N} \left( \int_H  \phi_h \overline{\psi}_{h_j} d\sigma_H \right) \psi_{h_j} \end{equation}
where $d\sigma_H$ is the volume measure on $H$ induced by the metric $g_{_H}$. We study the Fourier coefficients in (\ref{eqn:four}), $\langle \phi_h,\psi_{h_j} \rangle_{L^2(H)}$ to gain an understanding of the restricted eigenfunctions $\phi_h|_H$.  To extract more information we instead study the growth of $|\langle \phi_h, \psi_h\rangle_{L^2(H)}|$ where $\{\psi_h\}$ is any collection of functions on $H$. We will call these the generalized Fourier coefficients. 

\subsection{Summary of Existing Results }
The growth of averages and weighted averages of eigenfunctions over a submanifold $H$ has been widely studied. Much work has been done in the case where $H$ is a smooth, closed curve, $\gamma$, and $(M,g)$ is a surface. Good \cite{good} and Hejhal \cite{Hej} showed for $\gamma$ a periodic geodesic and $(M,g)$ a  hyperbolic surface that there is a $C>0$ such that as $h\to 0^+$
\begin{equation} \label{eqn:good} \left| \int_\gamma \phi_h d\sigma_\gamma  \right| \leq C. \end{equation}
The integral in $(\ref{eqn:good})$ is typically called a period integral. Further, for $\gamma$ a unit length geodesic, Chen and Sogge  \cite{CS} showed that $\left| \int_\gamma \phi_h d\sigma_\gamma \right|\leq C \|\phi_h\|_{L^2(M)}$.  Without needing to make any global assumptions on the surface $M$ or curve $\gamma$, Xi \cite{Xi} proved for $0\leq \alpha h<c<1$ that 
\begin{equation}\label{eqn:xi}
 \left|   \int_\gamma \phi_h(\gamma(t))   e^{-i \alpha t} dt \right| \leq C |\gamma|
 \end{equation}
where $|\gamma|$ is the length of $\gamma$  

More generally, for $M$ an $n$-dimensional manifold and $H$ a submanifold of codimension $k$, Zelditch \cite{Zel} proved the sharp bound
\begin{equation}\label{eqn:stdr}
\left| \int_H \phi_h d\sigma_H  \right| =O(h^{\frac{1-k}{2}}) 
\end{equation}
which generalizes (\ref{eqn:good}). This bound has since been improved under various assumptions on $M$ and $H$ by Canzani, Galkowski, Sogge, Toth, Wyman, Xi, and Zhang \cite{CGT,IEA,CG,ECVGB,SXZ,Wy17,Wy18,Wy19a,Wy19b,Xi}. Particularly in \cite{CG}, Canzani and Galkowski show for a weight $w\in C^\infty(H)$ that
\[
 \limsup_{h\to 0^+} h^{\frac{k-1}{2}} \left| \int_H  \phi_h w \, d\sigma_H \right| \leq C_{n,k} \int_{SN^*H}  |w| \sqrt{f |H_pR_H|^{-1}} d\sigma_{SN^*H},
 \]
where $SN^*H$ is the unit conormal bundle of $H$, $H_p R_H$ is a function measuring how fast geodesics flow out of the submanifold, and $f$ is related to the defect measure of $\phi_h$.  They actually prove a stronger result for $\{\phi_h\}$ quasimodes of a wide class of semiclassical operators. To obtain their estimates, they develop a new technique that involves localizing $\phi_h$ near a family of geodesics emanating from points in $H$. Using this framework, they improve many existing results without needing global geometric conditions on their manifold. 

Under various assumptions the standard restriction bound (\ref{eqn:stdr}) has been logarithmically improved.  In \cite{SXZ}, Sogge, Xi, and Zhang study weighted period integrals on geodesics and show that there is a $C>0$ such that
\[
\left| \int_\gamma  \phi_h \, w \, d\sigma_\gamma \right| \leq C (\log (1/h))^{-1/2}, \qquad h\ll 1,
\]
for $M$ a hyperbolic surface, $\gamma$ a geodesic, and $w\in C_0^\infty$. Wyman extends this to the case where $M$ is a surface with nonpositive curvature in \cite{Wy19a} and further extends this to $k-$codimensional submanifolds in \cite{Wy18}. There he shows for manifolds with negative sectional curvature that
\begin{equation}\label{eqn:log}
\left| \int_H \phi_h d\sigma_H \right| =O\left(\frac{h^\frac{1-k}{2}}{\sqrt{\log(1/h)}} \right).
\end{equation}
In \cite{IEA}, Canzani and Galkowski give conditions on $(H,M)$ for which (\ref{eqn:log}) holds.

In this work, we allow the "weight" $w$ to be $h$-dependent. We will utilize Canzani and Galkowski's technique to obtain our results.

\subsection{Statement of Results} 
Let $H\subset M$ be a closed, embedded submanifold of codimension $k$. Let $\{\psi_h\}$ be a collection of $L^2$-normalized functions on $H$, 
\begin{equation}\label{eqn:psi}
\|\psi_h\|_{L^2(H)}=1, 
\end{equation}
and let $A=\wfs(\psi_h) \subset T^*H$ (see \cite[pg. 188]{zworski} for definition of the semiclassical wavefront set, denoted $\wfs$). We will use the coordinates $(x',\xi')$ in $T^*H$.

We assume $\{\psi_h\}$ has defect measure $\nu$  (see \cite[pg. 100]{zworski} for definition of a defect measure). Note that $\supp \nu \subset A$. Further, assume 
\begin{equation}\label{eqn:psi2}
\wfs(\psi_h) =A\Subset B^*H
\end{equation}
where $B^*H$ denotes the coball bundle in $T^*H$. Using the coordinates on $T^*H$ we can also write this as $A \Subset \{(x',\xi'):|\xi'|_{g_{_H}}<1\}$ where $g_{_H}$ is the metric induced by $g$ on $H$. We define 
\begin{equation}\label{eqn:sa}
\sa:=\{\rho\in S^*_H M:\pi_{_{T^*H}}\rho\in A\}\subseteq T^*M
\end{equation}
where $S^*_H M$ denotes the cosphere bundle with footprints in $H$ and $\pi_{_{T^*H}}$ is the projection from $T^*M$ onto $T^*H$.

We use the defect measure $\nu$ to define a measure $\nua$ on $\sa$. Essentially $\nua$ is an extension of the defect measure $\nu$ to $\sa$. We later define $\nua$ more explicitly in (\ref{eqn:nua})

In what follows we denote the recurrent set of $\sa$ by $\mathcal{R}_A$ (see Section \ref{sec:rec} for explicit definition).  Roughly, the recurrent set of $\sa$ is the collection of points $\rho\in\sa$ which, under the geodesic flow, return to $\sa$ infinitely often and eventually get arbitrarily close to the initial point $\rho$.

\begin{theorem}\label{thrm:rec}
Let $\{\phi_h\}$ be a sequence of Laplace eigenfunctions on $M$ satisfying (\ref{eqn:efcn}). Let $H\subset M$ be a closed, embedded, smooth submanifold of codimention $k$, and let $\{\psi_h\}\subset L^2(H)$ be a sequence of $L^2-$normalized functions on $H$ with defect measure $\nu$, satisfying $\wfs(\psi_h)=:A\Subset B^*H$. If $\nua(\mathcal{R}_A)=0$, where $\nua$ is defined in (\ref{eqn:nua}), then
\begin{equation}\label{eqn:thrmrec}
|\langle \phi_h, \psi_h \rangle_{L^2(H)}|=o(h^{\frac{1-k}{2}}), \qquad h\to 0^+.
\end{equation}
\end{theorem}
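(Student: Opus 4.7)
The plan is to apply the Canzani--Galkowski geodesic-beam framework, decomposing $\phi_h$ into contributions from tubes about geodesics emanating from $\sa$. Each tube will contribute a factor of size $h^{(1-k)/2}$ weighted by its defect-measure mass, and the non-recurrence hypothesis will force these weights to be collectively small. The first step is microlocal localization: since $\wfs(\psi_h)=A\Subset B^*H$, pick $\chi\in C_c^\infty(T^*H)$ with $\chi\equiv 1$ on $A$ and $\supp\chi\Subset B^*H$, so that
\[
\langle\phi_h,\psi_h\rangle_{L^2(H)}=\langle\phi_h,\oph(\chi)^*\psi_h\rangle_{L^2(H)}+O(h^\infty).
\]
Writing the right-hand side as an integral on $M$ against a distribution supported on $H$ and lifting $\chi$ through $\pi_{T^*H}$ shows that only the mass of $\phi_h$ in an arbitrarily small neighborhood of $\sa\subset S^*M$ contributes.

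Next, for $\delta\in(0,1/2)$ and a propagation time $T>0$, cover $\sa$ by a finite family of tubes $\{\mathcal{T}_j(T)\}$ of radius $h^{1/2-\delta}$ about geodesic arcs $\{\varphi_t(\rho_j):|t|\leq T\}$, where $\{\rho_j\}\subset\sa$ is maximal $h^{1/2-\delta}$-separated. A microlocal partition of unity subordinate to this cover produces $\phi_h=\sum_j\phi_h^{(j)}+O_{L^2}(h^\infty)$ with $\phi_h^{(j)}$ essentially supported in $\mathcal{T}_j(T)$. The Canzani--Galkowski tube-by-tube restriction computation (stationary phase in the $k$ conormal directions), combined with Cauchy--Schwarz against the microlocalized $\psi_h$, yields
\[
\bigl|\langle\phi_h,\psi_h\rangle_{L^2(H)}\bigr|\leq C\,h^{(1-k)/2}\Bigl(\sum_j\mu(\mathcal{T}_j(T))\,\nua(U_j)\Bigr)^{1/2},
\]
where $\mu$ is the defect measure of $\phi_h$ and $U_j$ is a neighborhood of $\rho_j$ in $\sa$. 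Each tube thus contributes through its $\mu$-mass, weighted by the measure $\nua$ coming from $\psi_h$.

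The non-recurrence hypothesis is what keeps this weighted sum small. Given $\varepsilon>0$, choose an open $V_\varepsilon\supset\mathcal{R}_A$ with $\nua(V_\varepsilon)<\varepsilon$. Compactness of $\sa\setminus V_\varepsilon$ together with pointwise non-recurrence at each of its points yields a uniform time $T(\varepsilon)>0$ such that the forward and backward trajectories $\varphi_t(\rho)$, $\rho\in\sa\setminus V_\varepsilon$, $T_0\leq|t|\leq T(\varepsilon)$, all avoid a fixed neighborhood of $\sa$. For tubes based at such $\rho_j$ the propagated copies over $[0,T(\varepsilon)]$ are pairwise disjoint, so geodesic-flow invariance of $\mu$ and the bound $\mu(S^*M)\leq 1$ give $\mu(\mathcal{T}_j(T(\varepsilon)))\leq CT(\varepsilon)^{-1}$, so that their total contribution is bounded by $CT(\varepsilon)^{-1/2}\,h^{(1-k)/2}$. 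Tubes based in $V_\varepsilon$ contribute at most $C\sqrt{\nua(V_\varepsilon)}\,h^{(1-k)/2}\leq C\sqrt{\varepsilon}\,h^{(1-k)/2}$ directly from the weighted bound above. Dividing by $h^{(1-k)/2}$ and sending $h\to 0^+$ then $\varepsilon\to 0^+$ (forcing $T(\varepsilon)\to\infty$) yields \eqref{eqn:thrmrec}.

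The main obstacle is the third step: turning the qualitative condition $\nua(\mathcal{R}_A)=0$ into uniform quantitative tube-return disjointness, and then extracting the $T(\varepsilon)^{-1}$ decay from flow invariance of $\mu$. This requires coordinating the beam width $h^{1/2-\delta}$ with the propagation time $T(\varepsilon)$ so that Egorov remainders stay small while $T(\varepsilon)\to\infty$, typically forcing $T(\varepsilon)\ll\log(1/h)$ in the final optimization; additionally, the weighted tube-by-tube sum in the second step must be split into its $V_\varepsilon$ and $\sa\setminus V_\varepsilon$ parts in a manner compatible with the bounded-overlap structure of the cover so that the $\nua$-weighting really localizes the bad contribution to $V_\varepsilon$.
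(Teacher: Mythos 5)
Your proposal takes a genuinely different route from the paper's, and unfortunately the key dynamical step contains a gap that would sink the argument.

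The paper proves Theorem \ref{thrm:rec} as a short corollary of Theorem \ref{thrm:6} together with the Poincar\'e Recurrence Theorem. Concretely: pass to a subsequence with defect measure $\mu$, apply Theorem \ref{thrm:6} to bound $\limsup h^{(k-1)/2}|\langle\phi_h,\psi_h\rangle|$ by $(\int_{\sa}(1-|\xi'|^2)^{(k-2)/2} f\,d\nua)^{1/2}$, split the integral over $\mathcal{R}_A$ and $\sa\setminus\mathcal{R}_A$, use $\nua(\mathcal{R}_A)=0$ and the mutual singularity of $\nua$ and $\laa$ to reduce to $\mua(\sa\setminus\mathcal{R}_A)$, and then invoke Lemma \ref{lem:Ra}, which uses that $\mu$ is a flow-invariant probability measure (this is where the quasimode hypothesis enters) so that the Poincar\'e Recurrence Theorem applied to $(S^*M,\mu,\varphi_t)$ forces $\mua(\sa\setminus\mathcal{R}_A)=0$. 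The tube decomposition and the parameter $T$ enter only in proving Proposition \ref{prop}, where $T\to 0^+$ (not $T\to\infty$) is used to recover $\mua$; no long-time propagation is required at any stage.

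Your plan instead tries to propagate tubes for a long time $T(\varepsilon)$ and extract decay from flow invariance directly, which is a mechanism closer to the non-recurrent arguments of \cite[Theorem 2]{CG} or \cite{IEA}. The key problem is in your third step: the claim that compactness of $\sa\setminus V_\varepsilon$ plus pointwise non-recurrence gives a uniform $T(\varepsilon)$ such that $\varphi_t(\rho)$ for $\rho\in\sa\setminus V_\varepsilon$ and $T_0\leq|t|\leq T(\varepsilon)$ avoids a neighborhood of $\sa$ is false. Recurrence (Section \ref{sec:rec}) requires the Poincar\'e returns $\eta^{\pm k}(\rho)$ both to exist for all $k$ and to accumulate at $\rho$ itself. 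A non-recurrent $\rho$ may still return to $\sa$ finitely or even infinitely many times (just not accumulating at $\rho$), so its trajectory can certainly re-enter a neighborhood of $\sa$ within $[T_0,T(\varepsilon)]$; nor does non-recurrence preclude the trajectory passing near $\rho$ at times when it is off $\sa$, so tube self-intersection is also not ruled out. Without disjointness of the propagated tube pieces, flow invariance does not yield $\mu(\mathcal{T}_j(T))\lesssim T^{-1}$. Additionally, your step 2 bound against $\mu(\mathcal{T}_j(T))$ for $h$-dependent tube radius $h^{1/2-\delta}$ is not meaningful for a defect measure (a weak-$*$ limit), and the tube restriction estimate (Lemma \ref{lem:13}/Lemma \ref{lem}) in this paper is stated only for $T<T_0$ small; using large $T(\varepsilon)$ would require an Egorov-type propagation of the cutoffs that the paper does not develop. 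The paper sidesteps all of this by extracting the dynamical content at the level of the measure $\mua$ via Poincar\'e recurrence rather than at the level of tube geometry.
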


To the best of our knowledge, the Fourier coefficients of restricted eigenfunctions have not been studied under dynamic assumptions before. The most comparable result, \cite[Theorem 2]{CG} due to Canzani and Galkowski, gives conditions on the recurrent set of $SN^*H$ for which the period integral $\int_H \phi_h d\sigma_H$ is $o(h^\frac{1-k}{2})$ as $h\to 0^+$. If we take the collection $\psi_h\equiv 1$ we recover their result (see Example \ref{ex:yj6}). 
In Examples \ref{ex:torus2} and \ref{ex:torus} we demonstrate how Theorem \ref{thrm:rec} can be used in two different ways: to study the generalized Fourier coefficients and to understand the size of the recurrent set.

Next, instead of taking $\{\phi_h\}$ to be exact Laplace eigenfunctions, we further generalize by considering quasimodes of the form
\begin{equation}\label{eqn:quasi}
(-h^2\lapm-1) \phi_h=o_{L^2(M)}(h) \,\text{ as }\, h\to 0^+ \qquad \text{and} \qquad \| \phi_h\|_{L^2(M)}=1.
\end{equation}
We also assume $\phi_h$ is compactly microlocalized. That is, there exists a cutoff $\chi\in \cci(T^*M)$ such that
\[
(1-\oph(\chi))\phi_h = O_{C^\infty}(h^\infty).
\]
Further, let $\mu$ be a defect measure for $\phi_h$. We note that $\mu$ is supported in $S^*M$. Similar to \cite[Lemma 6 \&  Remark 3]{CGT} we use $\mu$ to define a measure on $\sa$, $\mua$, by
\begin{equation}\label{eqn:mua}
\mua(\Omega):= \lim_{T\to 0^+} \frac{1}{2T} \mu \left( \bigcup_{|t|\leq T} \varphi_t(\Omega) \right) \qquad \text{for} \, \Omega \subseteq \sa \,\, \text{ Borel}.
\end{equation}
The following theorem gives our main estimate for controlling generalized Fourier coefficients of quasimodes. Theorem \ref{thrm:rec} then follows as a corollary.

\begin{theorem}\label{thrm:6}
Let $\{\phi_h\}$ be a sequence of compactly microlocalized quasimodes on $M$ satisfying (\ref{eqn:quasi}) with defect measure $\mu$. Let $H\subset M$ be a closed, embedded, smooth submanifold of codimention $k$, and let $\{\psi_h\}\subset L^2(H)$ be a sequence of  $L^2-$normalized functions on $H$ with defect measure $\nu$, satisfying $\wfs(\psi_h)=:A\Subset B^*H$.   Further, suppose we have a Radon-Nikodym decomposition of the form
\[
\mua=f\nua+\laa
\]
where $\nua\perp \laa$ and $f\in L^1(\sa,\nua).$
Then there exists a constant $C_{n,k}>0$ depending only on $n$ and $k$ such that 
\begin{equation}\label{eqn:thrm6}
\limsup_{h\to 0^+} h^{\frac{k-1}{2}} \left| \langle \phi_h,\psi_h \rangle_{L^2(H)} \right| \leq C_{n,k} \left(  \int_{\sa}  (1-|\xi'|^2_{g_{_H}(x')})^{\frac{k-2}{2}} f  d \nua \right)^{1/2}.
\end{equation}
\end{theorem}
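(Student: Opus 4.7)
The plan is to adapt the geodesic beam machinery of Canzani--Galkowski to the bilinear pairing $\langle\phi_h,\psi_h\rangle_{L^2(H)}$. The starting observation is that since $\wfs(\psi_h)=A\Subset B^*H$, the distribution $\psi_h\,d\sigma_H$ on $M$ has wavefront set contained in $\sa$, so only the part of $\phi_h$ microlocalized near $\sa$ contributes to the pairing. Concretely, pick $\chi^H\in C_c^\infty(T^*H)$ with $\chi^H\equiv 1$ near $A$ and $\supp\chi^H\Subset B^*H$, so that $\psi_h=\oph(\chi^H)\psi_h+O_{L^2}(h^\infty)$. Fix small parameters $\delta>0$ and $T>0$. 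Cover $\sa$ by Borel sets $\{B_j\}$ of diameter $\leq\delta$ with bounded overlap, pick $\rho_j\in B_j$, and build two partitions of unity: a pseudodifferential partition $\{\oph(\chi_j)\}$ on $T^*M$ whose symbols are supported in geodesic tubes of length $T$ and cross-section $\sim\delta$ around $\rho_j$, and a matching partition $\{\oph(\chi_j^H)\}$ on $T^*H$ whose symbols are supported near $\pi_{T^*H}(B_j)$. Writing $(x'_j,\xi'_j):=\pi_{T^*H}(\rho_j)$, so that the conormal component of $\rho_j$ has length $(1-|\xi'_j|^2_{g_{_H}})^{1/2}$, the decomposition gives
\[
\langle\phi_h,\psi_h\rangle_{L^2(H)}=\sum_j\langle\oph(\chi_j)\phi_h,\oph(\chi_j^H)\psi_h\rangle_{L^2(H)}+O(h^\infty).
\]

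The second step is to estimate each piece via a tube-localized restriction bound. The key input, which follows from the Canzani--Galkowski refinement of Burq--G\'erard--Tzvetkov, is that for $u$ microlocalized in the tube around $\rho_j$,
\[
\|\,(\oph(\chi_j)u)|_H\,\|_{L^2(H)}\leq C_{n,k}\,h^{(1-k)/2}\,T^{-1/2}\,(1-|\xi'_j|^2_{g_{_H}})^{(k-2)/4}\,\|\oph(\chi_j)u\|_{L^2(M)}+O(h^\infty),
\]
where the $T^{-1/2}$ reflects the flow-spread of the tube and the geometric weight $(1-|\xi'_j|^2)^{(k-2)/4}$ is the size of the conormal fiber at $\rho_j$. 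Applied term by term and combined with Cauchy--Schwarz on $L^2(H)$, this gives
\[
|\langle\phi_h,\psi_h\rangle_{L^2(H)}|\leq C_{n,k}\,h^{(1-k)/2}T^{-1/2}\sum_j(1-|\xi'_j|^2)^{(k-2)/4}\|\oph(\chi_j)\phi_h\|_{L^2(M)}\|\oph(\chi_j^H)\psi_h\|_{L^2(H)}+o(1).
\]

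The third step is to pass to defect measures and apply Cauchy--Schwarz a second time, in $\ell^2$ against the weights $\nua(B_j)$. As $h\to 0^+$, one has $\|\oph(\chi_j)\phi_h\|_{L^2(M)}^2\to\mu(|\chi_j|^2)$ and $\|\oph(\chi_j^H)\psi_h\|_{L^2(H)}^2\to\nu(|\chi_j^H|^2)$; the tube-averaging definition (\ref{eqn:mua}) of $\mua$ together with flow-invariance of $\mu$ yields $\mu(|\chi_j|^2)\sim T\cdot\mua(B_j)$, while the construction of $\nua$ as the extension of $\nu$ to $\sa$ gives $\nu(|\chi_j^H|^2)\sim\nua(B_j)$. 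The factors of $T$ exactly cancel $T^{-1/2}$. Writing $w_j:=(1-|\xi'_j|^2)^{(k-2)/4}$ and applying Cauchy--Schwarz with the decomposition
\[
\sum_j w_j\sqrt{\mua(B_j)}\sqrt{\nua(B_j)}=\sum_j w_j\sqrt{\mua(B_j)/\nua(B_j)}\,\nua(B_j)\leq\Bigl(\sum_j w_j^2\tfrac{\mua(B_j)}{\nua(B_j)}\nua(B_j)\Bigr)^{1/2}\Bigl(\sum_j\nua(B_j)\Bigr)^{1/2},
\]
the Radon--Nikodym decomposition $\mua=f\nua+\laa$ with $\laa\perp\nua$ forces the singular part to drop out in the refinement limit $\delta\to 0$ (on any $B_j$ intersecting only $\supp\laa\setminus\supp\nua$, the factor $\nua(B_j)$ vanishes), so the first sum converges to $\int_\sa(1-|\xi'|^2_{g_{_H}})^{(k-2)/2}f\,d\nua$ while the second converges to $\nua(\sa)=\nu(A)=1$.

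The main obstacle is the careful management of the three simultaneous limits $h\to 0^+$, $\delta\to 0$, $T\to\infty$ (or $T$ large fixed), making sure that the pseudodifferential remainders in the geodesic beam restriction estimate, the errors in approximating $\mu(|\chi_j|^2)\approx T\mua(B_j)$ and $\nu(|\chi_j^H|^2)\approx\nua(B_j)$, and the Radon--Nikodym limit in the last Cauchy--Schwarz all go to zero in a compatible order. The argument that the singular part $\laa$ does not contribute in the last step is essentially the standard fact that $\laa\perp\nua$ implies any refinement of $\sa$ can be arranged so that $\laa(B_j)\,\mathbf{1}_{\{\nua(B_j)>0\}}\to 0$; combined with dominated convergence against the $\nua$-integrable density $f$, this yields the stated bound.
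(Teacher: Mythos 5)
Your outline follows the same broad strategy as the paper's proof (cover $\sa$ by geodesic tubes, pair a tube restriction estimate with the $\psi_h$--side localization, apply Cauchy--Schwarz, pass to defect measures, shrink the tube width), which the paper organizes into Lemma \ref{lem:oh}, Lemma \ref{lem}, Proposition \ref{prop}, and then the proof of Theorem \ref{thrm:6}. However, there are two genuine gaps and a couple of smaller imprecisions.

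The most serious gap is the treatment of the singular part $\laa$. You claim that in the last Cauchy--Schwarz step, dividing and multiplying by $\nua(B_j)$ and sending $\delta\to 0$ makes $\laa$ ``drop out in the refinement limit,'' because on cells where $\nua(B_j)=0$ the factor $\nua(B_j)$ kills the term. This argument does not survive scrutiny: mutual singularity gives a Borel set $V$ with $\laa(V)=0$ and $\nua(\sa\setminus V)=0$, but a covering cell $B_j$ may well intersect both $V$ and its complement, in which case $\nua(B_j)>0$ and $\laa(B_j)>0$ simultaneously. For instance, if $\laa$ has an atom at a point $\rho_0$ lying in the closure of $\supp\nua$, then \emph{every} cell containing $\rho_0$ carries $\laa(B_j)\geq\laa(\{\rho_0\})>0$ while also having positive $\nua$-mass, and the contribution does not vanish as $\delta\to 0$. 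Relatedly, your first Cauchy--Schwarz factor $\sum_j w_j^2\tfrac{\mua(B_j)}{\nua(B_j)}\nua(B_j)=\sum_j w_j^2\mua(B_j)\mathbf{1}_{\{\nua(B_j)>0\}}$ does \emph{not} converge to $\int w^2 f\,d\nua$ because the $\laa$-mass on such cells persists. This is precisely why the paper first fixes $\delta>0$, uses regularity of the mutually singular Radon measures to produce $K_\delta\subset U_\delta$ with $\nua(U_\delta)\leq\delta$ and $\laa(\sa\setminus K_\delta)\leq\delta$, builds a flow-invariant cutoff $\kappa_\delta$ supported in (the flowout of) $U_\delta$ and equal to one on $K_\delta$, and splits $\langle\phi_h,\psi_h\rangle$ into $\oph(\kappa_\delta)$ and $\oph(1-\kappa_\delta)$ pieces \emph{before} the tube covering. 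On the $\kappa_\delta$ piece the estimate of Proposition \ref{prop} gives $C\delta^{1/2}$ because the $\nua$-mass of $\supp\kappa_\delta|_\sa$ is $\leq\delta$; on the $(1-\kappa_\delta)$ piece the $\laa$-integral is $\leq\delta$ because one is localized away from $K_\delta$. Without some version of this maneuver the singular part cannot be discarded.

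The second gap is the tube restriction bound you cite. You posit
\[
\|(\oph(\chi_j)u)|_H\|_{L^2(H)}\leq C\,h^{(1-k)/2}\,T^{-1/2}\,(1-|\xi'_j|^2_{g_{_H}})^{(k-2)/4}\,\|\oph(\chi_j)u\|_{L^2(M)}+O(h^\infty),
\]
but the estimate actually available (the paper's Lemma \ref{lem}) reads
\[
\limsup_{h\to 0^+} h^{k-1}\|\oph(\chi)\phi_h\|^2_{L^2(H)}\leq\frac{C_k\,R^{k-1}}{2T\sqrt{1-|\xi'_0|^2_{g_{_H}}}}\int_{T^*M}|\chi|^2\,d\mu,
\]
i.e.\ exponent $-1/4$ on $(1-|\xi'_0|^2)$ (not $(k-2)/4$) together with an essential cross-section factor $R^{(k-1)/2}$. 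That $R^{(k-1)/2}$ is exactly what combines with the normalization factor $(1-|\xi'_j|^2)^{(k-1)/2}$ built into the definition \eqref{eqn:nua} of $\nua$: one has $\nua(U_j)\gtrsim\nu(B_A(\rho'_j,R_j))\,R_j^{k-1}(1-|\xi'_j|^2)^{-(k-1)/2}$ (see \eqref{eqn:cover}), which is how the exponent $(k-2)/2$ in the final bound arises. Your bookkeeping absorbs this discrepancy into the also-imprecise claim ``$\nu(|\chi_j^H|^2)\sim\nua(B_j)$,'' which equates a measure of a ball in $T^*H$ with a measure of a small rectangle in $\sa$ whose fiber dimension contributes an extra $R_j^{k-1}(1-|\xi'_j|^2)^{-(k-1)/2}$. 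The two errors cancel numerically, but each step as stated is false; a correct argument must keep track of $R^{(k-1)/2}$ in the restriction lemma and of the fiber-size factor when relating $\nu$ to $\nua$.

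Two smaller points: the limit on $T$ should be $T\to 0^+$, not $T\to\infty$, since $\mua$ is defined in \eqref{eqn:mua} by $\lim_{T\to 0^+}(2T)^{-1}\mu\bigl(\bigcup_{|t|\leq T}\varphi_t(\cdot)\bigr)$, so $\mu(|\chi_j|^2)\approx 2T\,\mua(B_j)$ holds for \emph{small} $T$; and the decomposition $\langle\phi_h,\psi_h\rangle=\sum_j\langle\oph(\chi_j)\phi_h,\oph(\chi_j^H)\psi_h\rangle+O(h^\infty)$ does not hold with independent double projections---what one actually needs (the paper's Lemma \ref{lem:oh}) is a one-sided insertion $\gh\oph(\chi_j\chi)\phi_h=\oph(\theta_j)\gh\oph(\chi_j\chi)\phi_h+O(h^\infty)$ valid because $\chi_j$ and $\theta_j$ are microlocally matched, so that the partition of unity is carried on the $\phi_h$ side and $\theta_j$ is merely inserted on the $\psi_h$ side.
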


This gives much more explicit control on the constant in the standard restriction bound (\ref{eqn:stdr}) which gives us more insight into when  (\ref{eqn:stdr}) can be improved upon. For example, if $f=0$ in (\ref{eqn:thrm6}) then we see that we have a little-$o$ improvement. Showing that $f=0$ under the assumptions of Theorem \ref{thrm:rec} is exactly how we obtain (\ref{eqn:thrmrec}).
In the special case where $\nua$ is a volume measure on $\sa$, we can refine the proof of the theorem \label{thrm:6} to get a finer bound as follows.

\begin{theorem}\label{thrm:rem}
Let $\{\phi_h\}$ and $\{\psi_h\}$ satisfy the hypothesis of Theorem \ref{thrm:6}. Suppose $\sa\subseteq N\subseteq T^*M$ where $N$ is a smooth submanifold of dimension $d\in\N$. Further, let $m$ be the volume measure on $N$ induced from the Liouville measure on $T^*M$.  Moreover, suppose we have
\[
\mua=f\nua+\laa \qquad \text{and} \qquad \nua=u \, m
\]
where $\nua \perp \laa$, $f\in L^1(\sa,\nua)$ and $u\in C(\sa;\R)$. 
Then there exists a constant $C_{n,k,d}>0$ depending only on $n,k$, and $d$ such that 
\begin{equation}\label{eqn:nuleb}
\limsup_{h\to 0^+} h^{\frac{k-1}{2}} \left| \langle \phi_h,\psi_h \rangle_{L^2(H)} \right| \leq C_{n,k,d}  \int_{\sa} \sqrt{ (1-|\xi'|^2_{g_{_H}(x')})^{\frac{k-2}{2}} f  }|u|\, dm.
\end{equation}

\end{theorem}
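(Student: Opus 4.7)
The plan is to revisit the proof of Theorem \ref{thrm:6} and avoid the single application of Cauchy--Schwarz used there to pull a square root outside the final integral. The extra regularity hypothesis $\nua = u\cdot m$ with $u\in C(\sa)$ and $m$ a smooth volume measure on a submanifold $N\supseteq\sa$ lets us replace that step with a direct Riemann-sum argument.

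First I would recall the geodesic beam decomposition driving the proof of Theorem \ref{thrm:6}: fix $\delta>0$ and cover $\sa$ by finitely many beam tubes $\{T_j\}$, centered at points $\rho_j\in\sa$, with $m$-diameter $O(\delta)$ and $m(T_j)\asymp\delta^d$. Attached to each tube are microlocal cutoffs $\oph(\chi_j)$ on $M$ and $\oph(\tilde{\chi}_j)$ on $H$ localizing $\phi_h$ and $\psi_h$ near $T_j$, respectively. I anticipate that the tube-local estimate underlying Theorem \ref{thrm:6} takes the form
\[
\left|\langle \oph(\chi_j)\phi_h,\,\oph(\tilde{\chi}_j)\psi_h\rangle_{L^2(H)}\right| \le C_{n,k}\,h^{\frac{1-k}{2}}\sqrt{(1-|\xi'(\rho_j)|^2_{g_{_H}})^{\frac{k-2}{2}}f(\rho_j)}\;\nua(T_j)+o(\nua(T_j)),
\]
so that the tube sum is $\sum_j\sqrt{(1-|\xi'(\rho_j)|^2)^{(k-2)/2}f(\rho_j)}\,\nua(T_j)$. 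In Theorem \ref{thrm:6} one finishes by Cauchy--Schwarz $\sum_j a_jb_j\le(\sum a_j^2)^{1/2}(\sum b_j^2)^{1/2}$, which yields $(\int f(1-|\xi'|^2)^{(k-2)/2}d\nua)^{1/2}$ up to a factor of $\nua(\sa)^{1/2}$ absorbed into the constant. My strategy is to skip this Cauchy--Schwarz step entirely.

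Under the hypothesis of Theorem \ref{thrm:rem}, continuity of $u$ and smoothness of $m$ give $\nua(T_j)=|u(\rho_j)|\,m(T_j)+o_{\delta\to 0}(m(T_j))$ uniformly in $j$. Substituting back into the per-tube estimate turns the tube sum into a Riemann sum for
\[
\int_{\sa}\sqrt{(1-|\xi'|^2_{g_{_H}(x')})^{\frac{k-2}{2}}f}\;|u|\,dm.
\]
Taking $\limsup_{h\to 0^+}$ first, then $\delta\to 0$, collects the tube contributions and produces the bound \eqref{eqn:nuleb}. Off-tube contributions vanish in the limit by the microlocal cutoff machinery already used in the proof of Theorem \ref{thrm:6}.

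The main obstacle is to confirm that the per-tube estimate in Theorem \ref{thrm:6} genuinely produces $\nua(T_j)$ rather than $\sqrt{\nua(T_j)}$. This is plausible because the pairing $\langle\oph(\chi_j)\phi_h,\oph(\tilde{\chi}_j)\psi_h\rangle$ splits by Cauchy--Schwarz at the $L^2$-level into $\|\oph(\chi_j)\phi_h\|_{L^2}\sim\sqrt{f(\rho_j)\nua(T_j)}$ (from the Radon--Nikodym derivative of $\mua$ with respect to $\nua$) and $\|\oph(\tilde{\chi}_j)\psi_h\|_{L^2}\sim\sqrt{\nua(T_j)}$ (from the defect measure of $\psi_h$), whose product is exactly $\sqrt{f(\rho_j)}\,\nua(T_j)$. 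Verifying this cleanly — together with uniformity of the $o(\nua(T_j))$ error in $j$ so that the Riemann-sum limit and the tube sum may be interchanged — is the technical crux, and should follow from the equicontinuity of the symbols in the geodesic beam construction and a standard Egorov argument.
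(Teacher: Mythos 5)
Your high-level plan—replace the outer Cauchy--Schwarz by a Riemann-sum argument over a tube cover, exploiting $\nua = u\,m$ with $u$ continuous—is exactly what the paper does. It uses a Besicovitch-type cover $\{B_j\}$ of $\sa$ by balls of radius $r$, a subordinate flow-invariant partition of unity $\theta_j$, and applies Proposition \ref{prop} tube-by-tube. The two square roots from Proposition \ref{prop}, $\nua(\supp\theta_j|_\sa)^{1/2}$ and the $L^2(\mua)$-factor, multiply to give a term that is linear in $m(B_j)$ once $\nua(B_j)\approx u(\rho_j)m(B_j)$, so the geometric mechanism you identify is correct.

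The genuine gap is in how you propose to justify the limit of the tube sum. Your per-tube estimate writes $\sqrt{f(\rho_j)}$ as if $f$ had a pointwise value at the tube centers, and you propose to control the error by ``equicontinuity of the symbols'' and an ``Egorov argument.'' But $f$ is only assumed to be in $L^1(\sa,\nua)$; it need not be continuous, bounded, or even everywhere finite, so a naive Riemann-sum with pointwise evaluations $f(\rho_j)$ is not available, and microlocal tools like Egorov's theorem do not address this: the issue is purely measure-theoretic. What one actually gets from Proposition \ref{prop} after plugging in $\nua=u\,m$ is a sum of the form
\[
\sum_{j}\Bigl(\tfrac{1}{m(B_j)}\int_{B_j}(1-|\xi'|^2_{g_{_H}})^{\frac{k-2}{2}}fu^2\,dm\Bigr)^{1/2} m(B_j),
\]
i.e.\ averages over shrinking balls, not point values. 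Passing to the limit $r\to 0$ requires (i) the Lebesgue Differentiation Theorem to identify the pointwise limit of the averages $m$-a.e., and (ii) domination of the integrand uniformly in $r$; the paper produces a dominating function via the Hardy--Littlewood maximal function $HF$ and then uses the weak-type $(1,1)$ Maximal Theorem to show $\sqrt{HF}\in L^1(\sa,m)$, so that the Dominated Convergence Theorem applies. Without this (or an equivalent) real-analysis step, the interchange of the $r\to 0$ limit with the tube sum is unjustified. You should also make explicit the mutually-singular split via $\kappa_\delta,\ 1-\kappa_\delta$ (as in the proof of Theorem \ref{thrm:6}) to dispose of the $\laa$ contribution; your sketch silently drops it.
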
 When we take $\{\psi_h\}$ to be an orthonormal collection of eigenfunctions on $H$ the estimate in Theorem \ref{thrm:6} allows us to study the growth of the generalized Fourier coefficients of restricted eigenfunctions. We note that the theorem holds in more generality than this, as the collection $\{\psi_h\}$ does not necessarily consist of eigenfunctions. To the best of our knowledge, the only existing results in this direction are due to Wyman, Xi, and Zelditch \cite{WXZ,wxyz}, where the authors obtain asymptotics for sums of the norm-squares of the generalized Fourier coefficients over the joint spectrum. If we take our collection $\{\psi_h\}$ independent of $h$, we recover the weighted averages result in \cite[Theorem 6]{CG}, which we demonstrate in Example \ref{ex:yj6}. We also show in Examples \ref{ex:xi1.4},  \ref{ex:xi1.3} that we are able to recover the results of \cite[Theorem 1.3]{Xi} and \cite[Theorem 1.4]{Xi}. A similar argument to \cite[Remark 1]{CG} could be used to show that we can use (\ref{eqn:nuleb}) with $H$ a single point to recover $L^\infty$ bounds of the generalized Fourier coefficients. Using such $L^\infty$ bounds, if in addition we take $\psi\equiv 1$, we could also recover the main result of \cite{STZ}.

\subsection{Examples}\label{sec:ex}

We next consider some examples to illustrate the use of Theorems \ref{thrm:rec},\ref{thrm:6}, and \ref{thrm:rem}. In the first two examples, we make use of Theorem \ref{thrm:rec} in two different ways. In the first, we show that the recurrent set has measure zero with respect to $\nua$, and hence we obtain a little-$o$ improvement. In the second example, we pick specific collections of $\phi_h$ and $\psi_h$ and explicitly compute the generalized Fourier coefficients. Then we use Theorem \ref{thrm:rec} to obtain information on the size of the recurrent set.   
In the later three examples we use Theorems \ref{thrm:6} and \ref{thrm:rem} to obtain bounds on the generalized Fourier coefficients in a few different settings. First, we take an explicit collection of $\psi_h$, second, we assume the collection of $\phi_h$'s are themselves restricted eigenfunctions, and third, when the collection of $\phi_h$ does not depend on the semiclassical parameter $h$.

We will use the coordinates $(x',\xb)$ with respect to $H$ such that $H=\{\bar{x}=0\}$ and work with dual coordinates $(\xi',\xib)$. In these coordinates we can write 
\[
\sa=\{(x',\bar{x},\xi',\bar{\xi}):|\bar{x}|=0, \, (x',\xi')\in A, \, |\xi|_g=1\}.
\]
Note that $\sa$ is parametrized by $(x',\xi',\xib)$ and that once $(x',\xi')$ are fixed, the remaining coordinate lives on the $k-1$ dimensional sphere of radius $\sqrt{1-|\xi'|_{g_{_H}}^2}$. We define the measure $\nua$ by
\begin{equation}\label{eqn:nua}
\int_{(x',0,\xi',\xib)\in \sa} f(x',\xi',\xib) d \nua(x',\xi',\xib) :=\int_{(x',\xi')\in A} \int_{\pi^{-1}(x',\xi')} \frac{f(x',\xi',\xib)}{c_k \big(1-|\xi'|_{g_{_H}(x')}^2  \big)^{\frac{k-1}{2}} } d \vol S^{k-1}_{\sqrt{1-|\xi'|_{g_{_H}}^2}}(\xib) d \nu(x',\xi')
\end{equation}
 where $c_k$ is such that $\nua(\sa)=\nu(A)=1$, $\pi$ is the projection of $\sa$ onto $A$, and $f$ is any integrable function on $\sa$.

\begin{example}[Extracting information from the dynamics]\label{ex:torus2}
Consider the torus $\mathbb{T}=\{(x,y)\in\R^2: (x,y) \sim (x+1,y) \sim (x,y+1) \}$ and a collection of $L^2$-normalized eigenfunctions  $\{\phi_h\}$ on $\mathbb{T}$. Furthermore, let $H=\{y=0\}$ and consider the collection of coherent states 
\[\psi_h=C(h)\exp\left(\frac{i}{2h}\Big(x-\frac{1}{2}\Big) \right) \exp\left(-\frac{1}{2h}\Big|x-\frac{1}{2}\Big|^2  \right)\]
on $H$ where $C(h)$ is such that $\|\psi_h\|_{L^2(H)}=1$. The wavefront set for $\{\psi_h\}$ is $A=\{(x,\xi):x=1/2,\xi=1/2\}$, and the defect measure is $\nu=\delta_{\{x=1/2,\xi=1/2\}}$. Therefore
\[
\sa=\{(x,y,\xi,\eta):x=1/2,\, y=0, \, \xi=1/2, \, \eta=\pm \sqrt{3}/{2} \}
\]
and $\nua$ is a point mass at both $(1/2,0,1/2,\sqrt{3}/2)$ and $(1/2,0,1/2,-\sqrt{3}/2)$ with mass $1/2$. Geodesics emanating from $\sa$ never return back to $\sa$ since their directions have irrational slopes. Therefore the recurrent set of $\sa$ is empty and hence $\nua(\mathcal{R}_A)=0$. Thus, Theorem \ref{thrm:rec} implies 
\[|\langle \phi_h,\psi_h \rangle_{L^2(H)}|=o(1)  \, \text{ as  } \, h\to 0^+.\]
\end{example}

\begin{example}[Obtaining information on the recurrent set]\label{ex:torus}
Consider the torus $\mathbb{T}$ and the collection of eigenfunctions on $\mathbb{T}$, $\phi_h=e^{\frac{i}{h} ( \frac{\sqrt{2}}{2} x +\frac{\sqrt{2}}{2} y)} $ where $h=\frac{\sqrt{2}}{4\pi n}$ and $n\in\N$. Furthermore, let $H=\{y=0\}$ and consider the collection of functions on $H$, $\psi_h= e^{\frac{i \sqrt{2}}{2 h} x} $. Then observe
\begin{equation}\label{eqn:ext}
|\langle \phi_h, \psi_h \rangle_{L^2(H)}|=\int_0^1 \phi_h\big|_{\{y=0\}} \overline{\psi}_h \,dx= \int_0^1 e^{\frac{i \sqrt{2}}{2h}x} e^{-\frac{i \sqrt{2}}{2 h} x} dx=1.
\end{equation}
 One can check that $A=\wfs\{\psi_h\}=\{(x,\xi):\xi=\sqrt{2}/2 \}$, $\nu=\delta_{\{\xi=\sqrt{2}/2\}} dx$,
\[
\sa=\{(x,y,\xi,\eta):y=0,\xi=\sqrt{2}/2,\eta=\pm \sqrt{2}/2 \}, 
\]
and $\nua=dx$, where we use $(\xi,\eta)$ to denote the dual coordinates to $(x,y)$. It is clear from (\ref{eqn:ext}) that $|\langle \phi_h, \psi_h \rangle_{L^2(H)}|\not=o(1)$  as $h\to 0^+$ and thus Theorem \ref{thrm:rec} implies $\nua(\mathcal{R}_A)>0$. 

For this example we can actually compute the recurrent set since the geometry is quite simple. Note that geodesics emanating from $\sa$ return to their starting point after time $n \sqrt{2}$, where $n\in\Z$.  Therefore every point of $\sa$ is recurrent and so $\nua(\mathcal{R}_A)=\nua(\sa)=1$.
\end{example}

\begin{example}[Reproducing {\cite[Theorem 1.4]{Xi}}]\label{ex:xi1.4}
Consider the simple case where we have a surface containing a smooth closed curve $\gamma$ parametrized by $t$. We consider
\[
 \limsup_{h\to 0^+}  \left| \int_\gamma \phi_h(\gamma(t)) e^{-i \alpha(h) t} dt \right| 
\]
for $\phi_h$ eigenfunctions, and some function $\alpha$ satisfying $0\leq \alpha(h) h <c<1$ and $\lim_{h\to 0^+} \alpha(h)h=\alpha_0$. We note that this is a semiclassical version of \cite[Theorem 1.4]{Xi}.  To apply our estimate, we need to normalize the exponential, thus we instead consider
\[
|\gamma|^{1/2}  \limsup_{h\to 0^+}  \left| \int_\gamma \phi_h(\gamma(t)) \frac{e^{-i \alpha(h) t}}{|\gamma|^{1/2}} dt \right| =: |\gamma|^{1/2}  \limsup_{h\to 0^+} |\langle \phi_h, \psi_h \rangle_{L^2(\gamma)}|
\]
We note that the collection $\{\psi_{h} \}=\{e^{i \alpha(h) t} |\gamma|^{-1/2}\}$ has a defect measure $\nu=|\gamma|^{-1}\delta_{\{\tau=\alpha_0\}} dt$ where $\tau$ is dual to $t$ and $dt$ denotes the Lebesgue measure on $\gamma$. Furthermore, the wavefront set $A=\wfs(\psi_{h})=\{\tau=\alpha_0\}$. Now, using $s$ to denote the coordinate on $M$ normal to $\gamma$ and $\sigma$ dual to $s$, we have
\[
\sa=\{(t,s,\tau,\sigma):s=0,\tau=\alpha_0,\sigma=\pm\sqrt{1-\alpha_0^2} \}
\]
which is one dimensional. Furthermore we compute $\nua=|\gamma|^{-1} dt$ where $dt$ is Lebesgue on $\sa$. 

Thus, applying Theorem \ref{thrm:rem}, we have that there is a $C>0$  such that
\[
|\gamma|^{1/2} \limsup_{h\to 0^+}  \left| \left\langle \phi_h, \psi_h \right\rangle_{L^2(\gamma)} \right|  \leq C |\gamma|^{1/2}  \int_{\sa} \sqrt{f (1-|\tau|^2)^{-1/2}} |\gamma|^{-1}  dt. 
\]
Next, using H\"{o}lder's inequality and that $\|f\|_{L^1(\sa,\nua)}\leq \mua(\sa)\leq1$ we obtain
\begin{align*}
|\gamma|^{1/2} \limsup_{h\to 0^+}  \left| \left\langle \phi_h, \psi_h \right\rangle_{L^2(\gamma)} \right|&\leq C |\gamma|^{1/2} \left( \int_\sa |f||\gamma|^{-1} dt \right)^{1/2} \left( \int_\sa (1-\alpha_0^2)^{-1/2} |\gamma|^{-1} dt\right)^{1/2} \\
&= C |\gamma|^{1/2} \| f \|^{1/2}_{L^1(\sa,\nua)} (1-\alpha_0^2)^{-1/4}  \leq \frac{C|\gamma|^{1/2}}{(1-\alpha_0^2)^{1/4}}. 
\end{align*}
Finally since $\alpha_0\leq c<1$ we have 
\begin{equation}\label{eqn:exxi}
 \limsup_{h\to 0^+}  \left| \int_\gamma \phi_h(\gamma(t)) e^{-i \alpha(h) t} dt \right| \leq \frac{C |\gamma|^{1/2}}{(1-\alpha_0^2)^{1/4}}  \leq  \frac{C |\gamma|^{1/2}}{(1-c^2)^{1/4}}.
\end{equation}
We see from (\ref{eqn:exxi}) that we are able to bound the Fourier coefficients by $C |\gamma|^{1/2}$ which differs from Xi's bound of $C|\gamma|$ stated in (\ref{eqn:xi}). This discrepancy is because our method uses $L^2$ norms, while Xi uses $L^1$ norms.  We also note that this example is a more general version of what Xi considered in \cite[Theorem 1.4]{Xi}, as we allow the weight $e^{-i\alpha(h) t}$ to depend on $h$.

\end{example}

\begin{example}[Reproducing {\cite[Theorem 1.3]{Xi}}]\label{ex:xi1.3}
As in \cite[Theorem 1.3]{Xi} we consider the case where $\phi_h$ are eigenfunctions on $M$ and $\psi_h$ are the restrictions of a eigenfunctions on $M$ to a hypersurface $H$. Let 
\[
\psi_h=\frac{\Psi_h |_{H}}{\| \Psi_h \|_{L^2(H)}} \qquad \text{where }  \Psi_h  \text{ satisfies } (-h^2 \lapm -\alpha(h)^2)\Psi_h=0 \text{ on  } M.
\]
We also assume that $0\leq \alpha(h)<c<1$ as in \cite[Theorem 1.3]{Xi} and suppose $\alpha(h)\to\alpha_0$, taking a subsequence if necessary. Since $\wfs (\Psi_{h} ) =\{|\xi|_g=\alpha_0\}$ one can see that $\wfs(\psi_{h})\subseteq \{|\xi'|_{g_H}\leq\alpha_0\}$ where we use coordinates $x=(x',\xb)$ on $M$ such that $H=\{\xb=0\}$, and dual coordinates $\xi=(\xi',\xib)$. Applying Theorem \ref{thrm:6} we have
\[
\limsup_{h \to 0^+}  \left| \langle \phi_{h}, \psi_{h} \rangle_{L^2(H)} \right|  \leq C_{n,1} \left( \int_{\sa} f (1-|\xi'|^2_{g_{_H}(x')})^{-1/2} d\nu^A \right)^{1/2}. 
\]
Furthermore, since $|\xi'|_{g_H}\leq \alpha_0$ on $\sa$, $\|f\|_{L^1(\sa,\nu^A)}\leq 1$, and $\alpha_0\leq c<1$, we obtain
\[
\limsup_{h \to 0^+}  \left| \langle \phi_{h}, \psi_{h} \rangle_{L^2(H)} \right|    \leq   C_{n,1}  \| f \|^{1/2}_{L^1(\sa,\nua)}   (1-\alpha_0^2)^{-1/4} \leq \frac{C_{n,1} }{(1-c^2)^{1/4}}.
\]
Thus, we find that for $h$ small 
\[
|\langle \phi_{h},\Psi_{h} \rangle_{L^2(H)} | \leq \frac{ C_{n,1}  \| \Psi_{h} \|_{L^2(H)}}{ (1-c^2)^{1/4}} \leq \frac{ C \left(1+ \frac{\alpha(h)}{h} \right)^{1/4}}{(1-c^2)^{1/4}} 
\]
where we use \cite[Theorem 3]{BGT} to bound $\| \Psi_{h} \|_{L^2(H)}$. In this case we recover the bound in \cite[Theorem 1.3 (1.23)]{Xi}. 
\end{example}

\begin{example}[Reproducing {\cite[Theorem 6]{CG}}]\label{ex:yj6}
We study the case where our collection $\{\psi_h\}$ does not depend on $h$. We consider 
\[
\limsup_{h\to 0^+} h^\frac{k-1}{2} | \langle \phi_h, w \rangle_{L^2(H)} | 
\]
where $\phi_h$ are compactly microlocalized quasimodes, and $w\in C^\infty(H)$ is independent of $h$. We must normalize $w$ to apply the theorem. We instead consider $\mathbf{w}=w \|w\|_{L^2(H)}^{-1}$. A short calculation shows that $\nu=\|w\|_{L^2(H)}^{-2} |w(x')|^2 \delta_{\{\xi'=0\}} dx'$ is the defect measure for $\mathbf{w}$ where we use coordinates $x=(x',\xb)$ on $M$ such that $H=\{\xb=0\}$, and dual coordinates $\xi=(\xi',\xib)$. Furthermore we observe that $A=\wfs(\mathbf{w})=N^*H$. Therefore $\sa= SN^*H$, which is $n-1$ dimensional. Next we note
\[
\nua= \|w\|_{L^2(H)}^{-2} |w(x')|^2 d\sigma_{SN^*H}
\]
where $\sigma_{SN^*H}$ is the measure on $SN^*H$ induced by the Sasaki metric on $T^*M$. Applying Theorem \ref{thrm:rem} we have
\begin{align*}
\limsup_{h\to 0^+} h^\frac{k-1}{2} | \langle \phi_h, \mathbf{w} \rangle_{L^2(H)} |   &\leq  \frac{C_{n,k,n-1}}{ \|w\|^2_{L^2(H)}}  \int_{SN^*H}  \sqrt{f (1-|\xi'|_{g_{_H}(x')}^2)^{\frac{k-2}{2} } }|w|^2  d\sigma_{SN^*H} \\
&=  \frac{C_{n,k}}{ \|w\|^2_{L^2(H)}}  \int_{SN^*H}  \sqrt{f }|w|^2  d\sigma_{SN^*H}
\end{align*}
since $\xi'=0$ on $SN^*H$. Note that in the notation of Theorem \ref{thrm:rem} we have $u=\|w\|^2_{L^2(H)}|w|^2$. In addition, since the dimension of $\sa$ is $n-1$ we just have that our constant depends on $n$ and $k$.
Thus, for the inner product with $w$, we have
\[
\limsup_{h\to 0^+} h^\frac{k-1}{2} | \langle \phi_h, w \rangle_{L^2(H)} |  \leq \frac{C_{n,k}}{\|w\|_{L^2(H)}} \int_{SN^*H} \sqrt{f} |w|^2  d\sigma_{SN^*H}=C_{n,k} \int_{SN^*H} \sqrt{f \|w\|^{-2}_{L^2(H)} |w|^2} |w| d\sigma_{SN^*H}  .
\]
The last equality matches with the bound in \cite[Theorem 6]{CG}, since under the square root is the Radon-Nikodym derivative of $\mua$ with respect to $\sigma_{SN^*H}$, which in this case is $fu=f\|w\|^{-2}_{L^2(H)}|w|^2$. 
\end{example}

\subsection{Organization of the paper} The remaining sections of our paper are organized as follows: Section 2 contains the proofs of Theorems \ref{thrm:6} and \ref{thrm:rem} assuming a key quantitative estimate given in Proposition \ref{prop}.  Section 3 contains a few of the more technical lemmas, which focus on localizing to $\sa$, needed to prove Proposition \ref{prop}. Section 3 can be omitted on a first read. Section 4 is dedicated to the proof of Proposition \ref{prop} in which the key idea is to first localize the generalized Fourier coefficients to geodesic tubes emanating from $\sa$. In Section 5 we define the recurrent set of $\sa$ and use Theorem \ref{thrm:6} to prove Theorem \ref{thrm:rec}.

\section{Proof of Theorem \ref{thrm:6} and Theorem \ref{thrm:rem}}
In this section we present the proofs of Theorems \ref{thrm:6} and \ref{thrm:rem}. We first introduce notation that will be used throughout the paper. Then we state the main estimate, Proposition \ref{prop}, which is central to the proof of Theorem \ref{thrm:6}, but we save its proof for Section \ref{sec:proofs}. Assuming the proposition, we prove Theorem \ref{thrm:6} and then modify its proof to obtain Theorem \ref{thrm:rem}.

Throughout this section we assume $\{\phi_h\}$ is a compactly microlocalized collection of quasimodes on $M$ satisfying (\ref{eqn:quasi}) with defect measure $\mu$. We also assume that the sequence of functions $\{\psi_h\}$  on $H$ have defect measure $\nu$ and satisfy (\ref{eqn:psi}) and (\ref{eqn:psi2}).

\subsection*{Acknowledgements}
The author would like to thank Yaiza Canzani and Jeffrey Galkowski for many insightful conversations throughout the course of this project and for their feedback on multiple drafts of the article. The author would also like to thank Blake Keeler for the many helpful discussions, especially at the early stages of this project. The author is grateful to the National Science Foundation for their support through the Graduate Research Fellowship Program (DGE-1650116).

\subsection{Preliminaries}
We let $P(h):=-h^2\lapm-1$ with principal symbol $p(x,\xi)=|\xi|_g^2-1$. Then we can rewrite the quasimode equation for $\phi_h$ as, $P(h) \phi_h =o_{L^2(M)}(h)$. 
Using properties of defect measures, we know that
\[\supp \mu \subseteq \{p=0\}= \{|\xi|_g^2=1\}=S^*M\subseteq T^*M,\]
so $\{\phi_h\}$ is localized near $S^*M$. Also, since $A=\wfs(\psi_h)$, we note that $\sa$, defined in (\ref{eqn:sa}), can be thought of points where $\phi_h$ are concentrated which project onto where $\psi_h$ are concentrated. Therefore, it is reasonable to expect contributions from $|\langle \phi_h,\psi_h \rangle_{L^2(H)}|$ to be small away from $\sa$. We prove this  in Lemma \ref{lem:ohinf}. 

We use $H_p$ to denote the Hamilton vector field associated to $p$ and $\varphi_t:=\exp(t H_p)$ to denote the geodesic flow. Let $\mathscr{L}\subset T^*M$ be a smooth, embedded hypersurface containing $\sa$ which is transversal to the flow, so 
\[H_p \not\in T\mathscr{L} \qquad \text{and} \qquad \sa\subset \mathscr{L}\] 
as depicted in Figure \ref{fig:tub}. For $\rho \in\mathscr{L}$ and $R>0$ define
\[
B_{\mathscr{L}}(\rho,R):=B(\rho,R)\cap \mathscr{L}.
\]
We use the geodesic flow to form tubes in $T^*M$ by flowing out of $\mathscr{L}$. For time $T>0$  and $U\subset \mathscr{L}$ we define the tube
\begin{equation}\label{eqn:tubes}
\T^T(U):=\bigcup_{|t|\leq T} \varphi_t(U).
\end{equation}
Sometimes when $U$ is a ball, we will write $\T^T_{\rho_0}(R):=\T^T(B_\mathscr{L}(\rho_0,R))$. 
For $U\subseteq \sa$  and $\ep>0$ we define 
\begin{equation}\label{eqn:thicc}
U(\ep):=\bigcup_{\rho\in U} B_{\mathscr{L}}(\rho,\ep) \subseteq \mathscr{L}
\end{equation}
which is a version of $U$ that has been thickened by $\ep$ into $\mathscr{L}$. We denote the "flowout" of $\sa$
\[
{\Lambda^T}(\ep):=\T^T(\sa(\ep))
\]
where $\sa(\ep)$ denotes the fattened version of $\sa$ defined in (\ref{eqn:thicc}).
Finally, define $\gh:C^\infty(M)\to C^\infty(H)$ which restricts functions on $M$ to $H$.
 
 \begin{figure}
\begin{center}
	\includegraphics[scale=0.5]{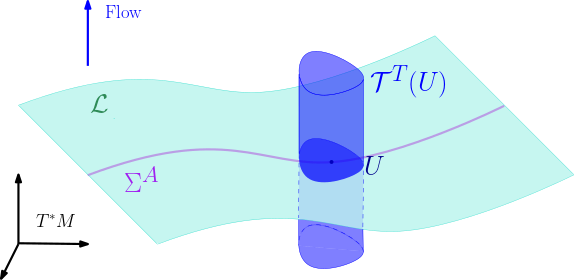}
	\end{center}
	\caption{}\label{fig:tub}
\end{figure} 

 To prove Theorem \ref{thrm:6} we begin by using a cutoff $\chi$ to localize to the respective supports of our mutually singular measures, $\nua$ and $\laa$. Thus we seek to understand how terms like $ \left| \langle \oph(\chi) \phi_h,\psi_h \rangle_{L^2(H)} \right| $ grow as $h\to 0^+$. We control such terms in the following proposition.
 
\begin{proposition} \label{prop} There exist $T_0,R_0>0$ such that for all $0<T\leq T_0$, $0<\ep\leq R_0$, and $\chi\in\cci(T^*M)$ with $H_p\chi\equiv 0$ on $\Lambda^{2T}(\ep)$, there exists a constant $C_{n,k}>0$ depending only on $n$ and $k$ such that

\[
\limsup_{h\to 0^+} h^{\frac{k-1}{2}} \left| \langle  \oph(\chi) \phi_h,  \psi_h  \rangle_{L^2(H)}  \right|  \leq C_{n,k} \left( \nua(\supp\chi|_{_{\sa}}) \int_{\sa} (1-|\xi'|^2_{g_{_H}(x')})^\frac{k-2}{2}   |\chi|^2 d\mua \right)^{1/2}.
\]
\end{proposition}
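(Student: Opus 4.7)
\medskip

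\noindent\textbf{Proof proposal.} The plan is a geodesic-beam decomposition in phase space followed by Cauchy--Schwarz separately on the $\phi_h$ and $\psi_h$ factors, after which the squared $L^2$-norms are converted into integrals against the defect measures $\mua$ and $\nua$.

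\smallskip

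First, cover $\supp\chi\cap\sa$ by finitely many balls $B_\mathscr{L}(\rho_j,\delta)\subset\mathscr{L}$ with $\rho_j\in\sa$ and $\delta\ll\min(T,\ep)$, and build a smooth partition of unity $\{\chi_j\}_{j}$ subordinate to the flowed-out tubes $\T^T(B_\mathscr{L}(\rho_j,2\delta))$. Because $H_p\chi\equiv 0$ on $\Lambda^{2T}(\ep)$, inside each tube $\chi$ depends only on the transversal variables on $\mathscr{L}$, so that $\chi\chi_j$ is, up to $O(\delta)$, equal to $\chi(\rho_j)\chi_j$ and its support lies in a thin flow-invariant tube around $\varphi_t(\rho_j)$. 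Using the technical lemmas of Section~3 (localization of $\oph(\chi)\phi_h$ to $\sa$, cf.\ the role of Lemma~\ref{lem:ohinf} announced in the text) we write
\[
\oph(\chi)\phi_h=\sum_j\oph(\chi\chi_j)\phi_h+O_{L^2}(h^\infty).
\]
On the $H$-side, pick $\tilde\beta_j\in\cci(T^*H)$ equal to $1$ near $\pi_{_{T^*H}}(B_\mathscr{L}(\rho_j,\delta))$ and supported in a slightly larger neighborhood in $A$; since $\wfs(\psi_h)\subset A$, we have $\psi_h=\sum_j\oph(\tilde\beta_j)\psi_h+O_{L^2(H)}(h^\infty)$ after choosing the $\tilde\beta_j$ to form a partition on $\pi_{_{T^*H}}(\supp\chi|_{_{\sa}})$.

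\smallskip

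Second, for each index $j$ apply Cauchy--Schwarz on $H$:
\[
\bigl|\langle\gh\oph(\chi\chi_j)\phi_h,\oph(\tilde\beta_j)\psi_h\rangle_{L^2(H)}\bigr|\leq \|\gh\oph(\chi\chi_j)\phi_h\|_{L^2(H)}\,\|\oph(\tilde\beta_j)\psi_h\|_{L^2(H)}.
\]
The $\psi_h$ factor is standard: as $h\to 0^+$, $\|\oph(\tilde\beta_j)\psi_h\|_{L^2(H)}^2\to \int|\tilde\beta_j|^2d\nu$, and summing the squares and using the definition (\ref{eqn:nua}) of $\nua$ produces $\sum_j\|\oph(\tilde\beta_j)\psi_h\|_{L^2(H)}^2\to \nua(\supp\chi|_{_{\sa}})+o_\delta(1)$, since partitioning $\pi_{_{T^*H}}(\supp\chi|_{_{\sa}})$ by projections of the balls $B_\mathscr{L}(\rho_j,\delta)$ accounts for every fibre direction in $\pi^{-1}(x',\xi')\cap\sa$ exactly once.

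\smallskip

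The core quantitative ingredient---and what I expect to be the main obstacle---is the geodesic-beam restriction estimate for the $\phi_h$ factor: for a symbol $B$ supported in a thin tube $\T^T_{\rho_j}(\delta)$ about a geodesic segment starting at $\rho_j\in\sa$,
\[
\|\gh\oph(B)\phi_h\|_{L^2(H)}^2\leq C_{n,k}\,h^{1-k}\bigl(1-|\xi'(\rho_j)|_{g_{_H}}^2\bigr)^{\frac{k-2}{2}}|B(\rho_j)|^2\,\mua\bigl(B_\mathscr{L}(\rho_j,\delta)\bigr)+o(h^{1-k}).
\]
This is the Canzani--Galkowski type restriction bound on a tube, and the factor $(1-|\xi'|^2_{g_{_H}})^{(k-2)/2}$ arises from combining the Jacobian $\hprh\sim(1-|\xi'|^2)^{-1/2}$ produced by a stationary-phase in the conormal variable $\xib_1$ with the $(k-1)$-dimensional sphere of transverse momenta parametrising the fibre of $\sa\to A$ (normalized by the $c_k(1-|\xi'|^2)^{(k-1)/2}$ factor in (\ref{eqn:nua})). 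Applying this with $B=\chi\chi_j$, so that $B(\rho_j)=\chi(\rho_j)$ up to $O(\delta)$, and $\mua(B_\mathscr{L}(\rho_j,\delta))$ being the $\mua$-mass of the tube base, summing in $j$ yields
\[
\sum_j h^{k-1}\|\gh\oph(\chi\chi_j)\phi_h\|_{L^2(H)}^2\leq C_{n,k}\int_{\sa}(1-|\xi'|_{g_{_H}}^2)^{\frac{k-2}{2}}|\chi|^2\,d\mua+o_\delta(1)
\]
since $\chi$ is constant along flow lines in $\Lambda^{2T}(\ep)$ and $\sum_j\mathbf 1_{B_\mathscr{L}(\rho_j,\delta)}$ is a Riemann-sum approximation of the characteristic function of $\supp\chi|_{_{\sa}}$.

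\smallskip

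Finally, combining the two factors via Cauchy--Schwarz in $j$,
\[
h^{\frac{k-1}{2}}\!\sum_j\|\gh\oph(\chi\chi_j)\phi_h\|_{L^2(H)}\|\oph(\tilde\beta_j)\psi_h\|_{L^2(H)}\!\leq\!\Bigl(\sum_j h^{k-1}\|\gh\oph(\chi\chi_j)\phi_h\|^2\Bigr)^{\!1/2}\!\Bigl(\sum_j\|\oph(\tilde\beta_j)\psi_h\|^2\Bigr)^{\!1/2},
\]
letting $h\to 0^+$ and then $\delta\to 0$ produces the stated bound. The hardest step remains the microlocal restriction estimate along a beam with the precise constant $C_{n,k}(1-|\xi'|^2_{g_{_H}})^{(k-2)/2}$; the partitioning and summing are routine once the cover is chosen fine enough that $\chi$, $\mua$-density and $(1-|\xi'|^2)^{(k-2)/2}$ are essentially constant on each tube.
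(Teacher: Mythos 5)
Your overall architecture---decompose $\supp\chi|_{\sa}$ into small tubes, prove a per-tube restriction bound for $\oph(\chi_j\chi)\phi_h$, apply Cauchy--Schwarz, and convert to the defect measures---is essentially the strategy the paper uses. But there is a concrete error in the claimed per-tube estimate, and it is not cosmetic: it is exactly the place where the bookkeeping has to balance out or the sum over $j$ diverges.

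The paper's per-tube restriction bound (Lemma~\ref{lem}) gives, for a cutoff $\chi_j\chi$ supported in the tube $\T^{3T}(B_\mathscr{L}(\rho_j,R_j))$,
\[
\limsup_{h\to 0^+} h^{k-1}\|\oph(\chi_j\chi)\phi_h\|^2_{L^2(H)}\leq \frac{C_k\,R_j^{k-1}}{2T\sqrt{1-|\xi'_j|^2_{g_{_H}}}}\int_{T^*M}|\chi_j\chi|^2\,d\mu,
\]
which, after converting $\mu$ of the tube to $\mua$ of its base via (\ref{eqn:mua}), scales like $R_j^{k-1}(1-|\xi'_j|^2)^{-1/2}|\chi(\rho_j)|^2\,\mua(B_\mathscr{L}(\rho_j,R_j))$. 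Your proposal instead asserts the bound $C_{n,k}(1-|\xi'_j|^2)^{(k-2)/2}|\chi(\rho_j)|^2\,\mua(B_\mathscr{L}(\rho_j,\delta))$, with no $\delta^{k-1}$. These agree only if $\delta^{k-1}\sim(1-|\xi'_j|^2)^{(k-1)/2}$, i.e.\ if the tube is as wide as the whole fibre sphere $S^{k-1}_{\sqrt{1-|\xi'_j|^2}}$. But a tube that wide would destroy the mechanism producing the $R^{k-1}$ in Lemma~\ref{lem}: that factor comes from the operators $Q_i=(hD_{\bar x_i}-\bar\xi_{0_i}(\bar x_1))^{k-1}$ vanishing to order $k-1$ on the central geodesic, so that $\|Q_i\oph(\tilde\chi)\|\lesssim R^{k-1}$, and this requires the tube to be narrow in all directions. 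So the core estimate as you state it is not true, and the proposal has dropped the $\delta^{k-1}$ factor that the argument cannot do without.

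This error is then compounded in the $\psi_h$-side bookkeeping. You claim $\sum_j\|\oph(\tilde\beta_j)\psi_h\|^2_{L^2(H)}\to\nua(\supp\chi|_\sa)$ because the $\tilde\beta_j$ ``account for every fibre direction exactly once.'' But the index $j$ runs over balls $B_\mathscr{L}(\rho_j,\delta)\subset\sa$, and each base ball in $A$ lies beneath $\sim(\sqrt{1-|\xi'_j|^2}/\delta)^{k-1}$ such balls. If the $\tilde\beta_j$ each live over the projected base ball, the sum overcounts by that factor and diverges as $\delta\to 0$. The paper's actual bookkeeping makes these two defects cancel: in (\ref{eqn:3}) one applies Cauchy--Schwarz in $j$ and moves the $R_j^{k-1}$ from the $\phi_h$ factor to the $\psi_h$ factor, simultaneously multiplying and dividing by $2(1-|\xi'_j|^2)^{(k-1)/2}$, so that the $\phi_h$-side sum becomes a Riemann sum for $\int(1-|\xi'|^2)^{(k-2)/2}|\chi|^2\,d\mua$ while the $\psi_h$-side sum becomes $\sum_j R_j^{k-1}(1-|\xi'_j|^2)^{-(k-1)/2}\nu(B_A(\rho'_j,\cdot))/2$, which is precisely what (\ref{eqn:cover}) compares with $\nua(\supp\chi|_\sa)$. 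The role of the paper's ``rectangle'' cover $U_j$ (a ball in $A$ crossed with a ball in the fibre sphere) is to make $\nua(U_j)\geq C_{n,k}\nu(B_A)R_j^{k-1}/\big(2(1-|\xi'_j|^2)^{(k-1)/2}\big)$ exact enough for this comparison; round balls $B_\mathscr{L}(\rho_j,\delta)$ alone are not sufficient to set up that inequality cleanly. In short: your approach is the right one, but you have merged two separate factors ($R^{k-1}$ per tube and the fibre normalization $(1-|\xi'|^2)^{(k-1)/2}$) prematurely, which both invalidates the stated per-tube estimate and breaks the summing step. Also note that the paper does not decompose $\psi_h$ itself; rather it uses Lemma~\ref{lem:oh} to insert $\oph(\theta_j)$ acting on $\gh\oph(\chi_j\chi)\phi_h$ at cost $O(h^\infty)$ and then moves $\oph(\theta_j)^*$ onto $\psi_h$---a cleaner route than the $\sum_j\oph(\tilde\beta_j)\psi_h+O(h^\infty)$ expansion, which would require your $\tilde\beta_j$ to form a genuine partition over $\wfs(\psi_h)$.
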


To use Proposition \ref{prop}, we need to work with cutoff functions $\chi\in\cci(T^*M)$ in which are flow invariant, meaning $H_p\chi\equiv 0$ on $\Lambda^{2T}(\ep)$. In the following lemma, we show that a cutoff $\tilde{\chi}\in\cci(\sa(\ep))$ can be extended to $T^*M$ in this way. 

\begin{lemma}\label{lem:flow}
For $\ep>0$ and $\tilde\chi\in\cci({\sa}(\ep);[0,1])$ there exists an extension $\chi\in\cci(T^*M,[0,1])$  such that $\supp \chi \subseteq {\Lambda^{3T}}(\ep)$ and $H_p \chi\equiv 0$ on $\Lambda^{2T}(\ep)$.
\end{lemma}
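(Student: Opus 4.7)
The plan is to exploit the transversality of $\mathscr{L}$ to the Hamilton flow in order to introduce flow-box coordinates on a neighborhood of $\sa(\ep)$, and then to define $\chi$ as a product of $\tilde\chi$ with a smooth one-dimensional cutoff in the flow-time variable. This makes flow invariance reduce to the cutoff being locally constant.

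First, since $\sa$ is compact, $\mathscr{L}$ is a smooth hypersurface containing $\sa$, and $H_p\notin T\mathscr{L}$ along $\sa$, we may (after shrinking $T_0$ and $R_0$ if necessary, compatibly with the constants in Proposition \ref{prop}) assume that the map
\[
\Phi:(-3T,3T)\times \sa(\ep) \longrightarrow T^*M, \qquad \Phi(s,\rho):=\varphi_s(\rho),
\]
is a diffeomorphism onto the open set $\Lambda^{3T}(\ep)$. The local diffeomorphism property follows from transversality, and injectivity over the compact support of $\tilde\chi$ is a standard flow-box argument valid for small times. In these coordinates $H_p=\partial_s$.

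Next, choose $\eta\in C_c^\infty(\R;[0,1])$ with $\eta\equiv 1$ on $[-2T,2T]$ and $\supp\eta\subset(-3T,3T)$, and define
\[
\chi(q):=\begin{cases}\tilde\chi(\rho)\,\eta(s), & q=\Phi(s,\rho)\in\Lambda^{3T}(\ep),\\ 0,& q\notin\Lambda^{3T}(\ep).\end{cases}
\]
On $\Lambda^{3T}(\ep)$ the function $\chi$ is smooth via $\Phi^{-1}$. Since $\tilde\chi$ is compactly supported in the open set $\sa(\ep)$ and $\eta$ vanishes near $|s|=3T$, the set $\supp\chi$ is a compact subset of the open set $\Lambda^{3T}(\ep)$, so the extension by zero is smooth globally, and clearly $0\le \chi\le 1$.

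Finally, verify the two conclusions. The containment $\supp\chi\subseteq\Lambda^{3T}(\ep)$ is immediate from the construction. For the flow invariance on $\Lambda^{2T}(\ep)$, pick any $q=\Phi(s,\rho)$ with $|s|\le 2T$ and $\rho\in\sa(\ep)$; then in the flow coordinates
\[
H_p\chi(q)=\partial_s\bigl(\tilde\chi(\rho)\eta(s)\bigr)=\tilde\chi(\rho)\,\eta'(s)=0,
\]
since $\eta'\equiv 0$ on $[-2T,2T]$. The main point requiring care is the injectivity of $\Phi$ on $(-3T,3T)\times\sa(\ep)$, i.e., ruling out that a flow line revisits $\sa(\ep)$ within time less than $6T$; this is what forces the smallness of $T_0$ and $R_0$ and is the only obstacle in the argument.
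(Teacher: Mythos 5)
Your proposal is essentially identical to the paper's proof: both use transversality of $\mathscr{L}$ to the Hamilton flow to introduce flow-box coordinates, write $\chi$ as the pullback of the product $\tilde\chi(\rho)\,\eta(s)$ with a one-dimensional bump $\eta$ that is $\equiv 1$ on $[-2T,2T]$ and supported in $(-3T,3T)$, and observe that $H_p$ becomes $\partial_s$ so that flow invariance on $\Lambda^{2T}(\ep)$ reduces to $\eta'\equiv 0$ there. Your remark that injectivity of the flow-box map is the only point requiring $T$ small is a reasonable observation that the paper passes over with "for $T$ small enough."
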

\begin{proof}

Since ${\sa}(\ep) \subseteq \mathscr{L}$ is transverse to the flow, for $T$ small enough, we can use the map $X: (-4T,4T)\times\mathscr{L} \to T^*M$ defined by
\[
X(t,\rho)=\varphi_t(\rho) 
\]
as coordinates. Let $f\in \cci(\R)$ with $\supp f\subset(-3T,3T)$ and  $f\equiv 1$ on $[-2T,2T]$. Then take $\chi=(X^{-1})^*(f(t)\tilde{\chi})$.
\end{proof}

We first prove Theorem \ref{thrm:6} assuming the proposition holds. The proof of Proposition \ref{prop} is saved for Section \ref{sec:proofs}.

\subsection{Proof of Theorem \ref{thrm:6}}\label{sec:pfth6}
Fix, $\delta>0$. Since $\nua$ and $\laa$ are mutually singular Radon measures on $\sa$ there exists $K_\delta \subseteq \sa$ compact and $U_\delta \subseteq \sa$ open and containing $K_\delta$ such that
\[
\nua(U_\delta)\leq \delta \qquad \text{and}\qquad \laa(\sa\setminus K_\delta) \leq \delta.
\]
Let $\tilde{\kappa}_\delta \in \cci(\sa;[0,1])$ such that
\[
\tilde{\kappa}_\delta \equiv 1 \text{ on } K_\delta \qquad \text{and} \qquad \supp \tilde{\kappa}_\delta \subseteq U_\delta.
\]
Furthermore, let $\kappa_\delta\in \cci(T^*M;[0,1])$ be a flow invariant extension of $\tilde{\kappa}_\delta$ as defined in Lemma \ref{lem:flow}. We split the inner product 
\begin{equation}\label{eqn:2terms}
\limsup_{h\to 0^+} h^{\frac{k-1}{2}} \left| \langle \phi_h, \psi_h \rangle \right| \leq \limsup_{h\to 0^+} h^{\frac{k-1}{2}}   \Big( \left|  \langle \oph(\kappa_\delta) \phi_h, \psi_h  \rangle_{L^2(H)}  \right| +   \left|  \langle \oph(1-\kappa_\delta) \phi_h, \psi_h  \rangle_{L^2(H)}  \right| \Big).
\end{equation}
Next, we use Proposition \ref{prop} with $\chi=\kappa_\delta$ on the first term to obtain 
\begin{equation}\label{eqn:kdelta1}
\limsup_{h\to 0^+} h^{\frac{k-1}{2}}   \left|  \langle \oph(\kappa_\delta) \phi_h, \psi_h  \rangle_{L^2(H)} \right|  \leq C_{n,k} \left( \nua(\supp \kappa_\delta|_{_\sa}) \int_{\sa}  (1-|\xi'|^2_{g_{_H}(x')})^\frac{k-2}{2}   |\kappa_\delta|^2 d\mua \right)^{1/2} \leq C\delta^{1/2}.
\end{equation}
The last inequality follows from the fact that $\nua(\supp \kappa_\delta|_{_\sa})=\nua(\supp \tilde{\kappa_\delta})\leq \nua(U_\delta)\leq \delta.$

Next, to bound the second term in (\ref{eqn:2terms}), we use Proposition \ref{prop} with $\chi=1-\kappa_\delta$ and the Radon-Nikodym decomposition of our measures, $\mua=f \nua+\lambda^A$. We have
\begin{align}
\limsup_{h\to 0^+} h^{\frac{k-1}{2}}  &  \left|  \langle \oph(1-\kappa_\delta) \phi_h, \psi_h  \rangle_{L^2(H)}  \right|  \nonumber \\
&\leq C_{n,k}  \nua(\supp (1-\kappa_\delta) \big|_{_\sa})^{1/2} \left( \int_{\sa} (1-|\xi'|^2_{g_{_H}(x')})^\frac{k-2}{2} |1-\kappa_\delta|^2 \big( f d\nua + d\laa \big) \right)^{1/2} \nonumber  \\
& \leq C_{n,k}  \nua(\sa)^{1/2} \left( \int_{\sa} (1-|\xi'|^2_{g_{_H}(x')})^\frac{k-2}{2}  f d\nua +  C\delta \right)^{1/2}, \label{eqn:kdelta}
\end{align}
where, in the last line, we used that $\tilde{\kappa}_\delta \equiv 1$ on $K_\delta$ and so $(1-\kappa_\delta)\big|_{_\sa}$ is supported on $\sa\setminus K_\delta$. Thus, since $\laa(\sa\setminus K_\delta)\leq \delta$, the $d\laa$ integral is bounded by  $C \delta$. 
Since $\nua(\sa)=1$, and (\ref{eqn:kdelta1}) and (\ref{eqn:kdelta}) hold for all $\delta>0$, combining the above we have
\[
\limsup_{h\to 0^+} h^{\frac{k-1}{2}} \left| \langle \phi_h, \psi_h \rangle \right| \leq C_{n,k}  \left( \int_{\sa} (1-|\xi'|^2_{g_{_H}(x')})^\frac{k-2}{2}  f d\nua \right)^{1/2} 
\]
giving the bound in (\ref{eqn:thrm6}) as desired.
$\hfill\square$

\subsection{Proof of Theorem \ref{thrm:rem}} Let $K_\delta, U_\delta,$ and $\kappa_\delta$ be as in the proof of Theorem \ref{thrm:6}. We similarly split the inner product:
\begin{equation*}
\limsup_{h\to 0^+} h^{\frac{k-1}{2}} \left| \langle \phi_h, \psi_h \rangle \right|  \leq \limsup_{h\to 0^+} h^{\frac{k-1}{2}}   \Big( \left|  \langle \oph(\kappa_\delta) \phi_h, \psi_h  \rangle_{L^2(H)}  \right| +   \left|  \langle \oph(1-\kappa_\delta) \phi_h, \psi_h  \rangle_{L^2(H)}  \right| \Big) =:I+I\!I. \\
\end{equation*}
Then applying Proposition \ref{prop} to $I$, we have
\[
\limsup_{h\to 0^+} h^{\frac{k-1}{2}} \left| \langle \phi_h, \psi_h \rangle \right| \leq C\delta^{1/2} + I\!I.
\]
By the Besicovitch-Federer Covering Lemma, there exists a constant $c_d>0$ depending only on $d$, the dimension of $\sa$  and $R$ so that for all $0<r<R$, there exist a cover of open balls $\{ B(\rho_1,r),\dots, B(\rho_{n(r)},r) \}=\{B_1,\dots, B_{n(r)}\}\subseteq \sa$ of radius $r$ centered at $\{\rho_1,\dots,\rho_{n(r)}\}$ with
\[
n(r)\leq c_d  r^{-d} \qquad \text{and} \qquad m(B_j) \leq c_d r^d
\]
 where $m$ is Lebesgue on $N\supseteq \sa$. Furthermore $\sa \subseteq \bigcup_{j=1}^{n(r)} B_j$ and each point in $\sa$ lies in at most $c_d$ balls. Then we let $\tilde{\theta}_j$ be a partition of unity associated to $B_j(\ep)$ and $\theta_j$ the flowed extensions into $T^*M$ such that $\supp H_p \theta_j \subseteq \T^{3T}(B_j(\ep)) \setminus \T^{2T}(B_j(\ep))$ and $\sum_{j=1}^{n(r)} \theta_j \equiv 1$ on $\Lambda^{2T}(\ep)$. Define $\Theta:=\sum_{j=1}^{n(r)} \theta_j$. Next we split $I\!I$:
\begin{align}
\limsup_{h\to 0^+} h^\frac{k-1}{2}|  \langle &\oph(1-\kappa_\delta) \phi_h,  \psi_h  \rangle_{L^2(H)}  | \nonumber \\
& \leq \limsup_{h\to 0^+}  h^\frac{k-1}{2} \Big( \left|  \langle \oph(\Theta(1-\kappa_\delta)) \phi_h, \psi_h  \rangle_{L^2(H)}  \right| +  \left|  \langle \oph((1-\Theta)(1-\kappa_\delta)) \phi_h, \psi_h  \rangle_{L^2(H)}  \right| \Big) \nonumber \\
& \leq   \limsup_{h\to 0^+} \sum_{j=1}^{n(r)}  h^\frac{k-1}{2} \left|  \langle \oph(\theta_j(1-\kappa_\delta)) \phi_h, \psi_h  \rangle_{L^2(H)}  \right|  \nonumber \\
& \qquad+  \limsup_{h\to 0^+}h^\frac{k-1}{2} \left|  \langle \oph((1-\Theta)(1-\kappa_\delta)) \phi_h, \psi_h  \rangle_{L^2(H)}  \right|.\label{eqn:splitagain}
\end{align}
Taking $h\to 0^+$ we can apply Proposition \ref{prop} to both terms. Using the support properties of $\Theta$, we find that the second term in (\ref{eqn:splitagain}) goes to $0$. For the first term in (\ref{eqn:splitagain}), we have
\begin{align}
\limsup_{h\to 0^+} \sum_{j=1}^{n(r)} & h^\frac{k-1}{2} \left|  \langle \oph(\theta_j(1-\kappa_\delta)) \phi_h, \psi_h  \rangle_{L^2(H)}  \right| \nonumber \\
&\leq C_{n,k} \sum_{j=1}^{n(r)} \nua( \supp (\theta_j (1-\kappa_\delta))|_{_\sa})^{1/2} \left(\int_\sa (1-|\xi'|^2_{g_{_H}(x')})^{\frac{k-2}{2}} | \theta_j (1-\kappa_\delta)|^2  d\mua  \right)^{1/2} \nonumber \\
& \leq C_{n,k}  \sum_{j=1}^{n(r)} \left( \int_{B_j} u \, dm  \right)^{1/2} \left(\int_\sa (1-|\xi'|^2_{g_{_H}(x')})^{\frac{k-2}{2}} | \theta_j (1-\kappa_\delta)|^2  (f u\, dm+ d\laa)   \right)^{1/2},  \label{eqn:rem} 
\end{align}
where we used that $\supp \theta_j|_{_\sa}\subseteq B_j$ and $\nua=u\, m$. As in the proof of Theorem \ref{thrm:6}, the $d\laa$ integral can be bounded by $C\delta$, and we thus focus on the $dm$ integral. Since $u$ is uniformly continuous on $\sa$, we can find an $R>0$ such that if $\rho\in B(\rho_j,R)$ then $|u(\rho)-u(\rho_j)|\leq\delta$. Therefore, 
\[
\int_{B_j} u \, dm \leq \int_{B_j}( u(\rho_j)+\delta) \, dm =( u(\rho_j)+\delta) m(B_j) \leq c_d r^d ( u(\rho_j)+\delta).
\] 
For each $B_j$ provided $r<R$ is small enough. Thus we can bound (\ref{eqn:rem}) by
\[
C_{n,k,d} \,r^{d/2} \sum_{j=1}^{n(r)} \left( \int_{\sa} (u(\rho_j)+\delta)  (1-|\xi'|^2_{g_{_H}(x')})^{\frac{k-2}{2}} | \theta_j (1-\kappa_\delta)|^2  f u\, dm  \right)^{1/2}+C \delta^{1/2}.
\]
Since $\supp \theta_j|_{_\sa}\subseteq B_j$ we can use the bound $u(\rho_j)+\delta \leq u(\rho)+2\delta$. Continuing, we find
\begin{align*}
C_{n,k,d} & \,r^{d/2} \sum_{j=1}^{n(r)}  \left( \int_{\sa} (u(\rho_j)+\delta)  (1-|\xi'|^2_{g_{_H}(x')})^{\frac{k-2}{2}} | \theta_j (1-\kappa_\delta)|^2  f u\, dm  \right)^{1/2} \\
&\leq C_{n,k,d} \sum_{j=1}^{n(r)} m(B_j)^{1/2}  \left( \int_{\sa} (1-|\xi'|^2_{g_{_H}(x')})^{\frac{k-2}{2}} | \theta_j |^2  f u^2 \, dm  \right)^{1/2} + C_{n,k,d}  \,r^{d/2} n(r)^{1/2}  \left( \int_{\sa} \delta f u\, dm \right)^{1/2} \\
& \leq C_{n,k,d} \int_{\sa}  \sum_{j=1}^{n(r)} \left( \frac{1}{m(B_j)} \int_{B_j}  (1-|\xi'|^2_{g_{_H}(x')})^\frac{k-2}{2} f u^2   \,  dm \right)^{1/2}  \mathds{1}_{B_j} \, dm + C \delta^{1/2}.
\end{align*}
Therefore, combining the above steps we have
\begin{equation} \label{eqn:dct}
\limsup_{h\to 0^+} h^\frac{k-1}{2} |\langle \phi_h,\psi_h \rangle_{L^2(H)} | \leq  C \delta^{1/2} + C_{n,k,d} \int_{\sa}  \sum_{j=1}^{n(r)} \left( \frac{1}{m(B_j)} \int_{B_j}  (1-|\xi'|^2_{g_{_H}(x')})^\frac{k-2}{2} f u^2   \,  dm \right)^{1/2}  \mathds{1}_{B_j} \, dm,
\end{equation}
and since the left side does not depend on $r$, we may bound $\limsup_{h\to 0^+} h^\frac{k-1}{2} |\langle \phi_h,\psi_h \rangle_{L^2(H)} | $ by the limit of the right side of (\ref{eqn:dct}) as $r\to 0$. We will use the Dominated Convergence Theorem to bring the limit inside the integral. To simplify our computations, we will write
\[
F(\rho):= (1-|\xi'|^2_{g_{_H}(x')})^\frac{k-2}{2} f u^2.
\]
First we calculate the limit of the integrand in (\ref{eqn:dct}). Using the Lebesgue Differentiation Theorem \cite[Theorem 3.21]{Fol} and that each point in $\sa$ lies in finitely many balls of the cover, we see that
\[
\limsup_{r\to 0}   \sum_{j=1}^{n(r)} \left( \frac{1}{m(B_j)} \int_{B_j}  F   \,  dm \right)^{1/2}  \mathds{1}_{B_j}   \leq C_{n,k,d} \sqrt{F}= C_{n,k,d} |u| \sqrt{ (1-|\xi'|^2_{g_{_H}(x')})^\frac{k-2}{2} f } \qquad m-a.e.
\]
Lastly, to justify the use of the Dominated Convergence Theorem we need to show that the integrand in (\ref{eqn:dct}) is dominated by an $L^1$ function. We note that
\[
\sum_{j=1}^{n(r)} \left( \frac{1}{m(B_j)} \int_{B_j}  F   \,  dm \right)^{1/2}  \mathds{1}_{B_j}  \leq \sum_{j=1}^{n(r)} \sqrt{HF(\rho_j)}  \mathds{1}_{B_j}  \leq C \sqrt{HF(\rho)} \qquad m-a.e.
\]
where $H$ denotes the Hardy-Littlewood Maximal Functional. Furthermore, by the Maximal Theorem \cite[Theorem 3.17]{Fol} there exists a constant $C$ so that for all $t>0$ 
\[
m\left( \left\{ \rho \in\sa : HF(\rho) \geq t \right\} \right) \leq \frac{C}{t}
\]
which implies that  $\sqrt{HF}\in L^1(\sa,m)$. To see this we compute
\begin{align*}
\int_{\sa} \sqrt{HF(\rho)} \, dm = \int_\sa \int_0^{\sqrt{HF(\rho)}} \, dt \, dm &= \int_{\sa} \int \mathds{1}_{\{0\leq t\leq 1\}} \, dt \, dm + \int_{\sa} \int \mathds{1}_{\{1\leq t\leq \sqrt{HF(\rho)}\}} \, dt \, dm \\
&\leq m(\sa) + \int_1^\infty \int_{\sa}  \mathds{1}_{\{ \sqrt{HF(\rho)} \geq t  \}}  \, dm \, dt \\
&= C + \int_1^\infty m( \rho\in\sa : \{\sqrt{HF(\rho)} \geq t   \} ) \, dt \\
&\leq C + \int_1^\infty \frac{C}{t^2} \, dt < \infty,
\end{align*}
where we use the Fubini-Tonelli Theorem to change the order of integration in the second line.
Therefore, we are justified in applying the Dominated Convergence Theorem and we conclude that
\[
\limsup_{h\to 0^+} h^{\frac{k-1}{2}} \left| \langle \phi_h, \psi_h \rangle \right| \leq C\delta^{1/2} + C_{n,k,d} \int_{\sa} |u| \sqrt{ (1-|\xi'|^2_{g_{_H}(x')})^\frac{k-2}{2} f}\, dm\]
which holds for all $\delta>0$ and hence we obtain (\ref{eqn:nuleb}).
$\hfill\square$

\section{Localizing to $\sa$}\label{sec:ap} 
We first present two technical results which will be needed in the proof of Proposition \ref{prop}. First, Lemma \ref{lem:oh} tells us how to construct a cutoff $\tilde\chi\in\cci(T^*H)$ such that $\oph(\tilde\chi) \gh \oph(\chi)\phi_h$ is $O(h^\infty)$. Next, Lemma \ref{lem:ohinf} shows that the contributions of the inner product are negligible away from $\sa$. This section can be omitted on a first read.
Once again, throughout this section we assume $\{\phi_h\}$ is a compactly microlocalized collection of quasimodes on $M$ satisfying (\ref{eqn:quasi}) with defect measure $\mu$. We also assume that the sequence of functions $\{\psi_h\}$  on $H$ have defect measure $\nu$ and satisfy (\ref{eqn:psi}) and (\ref{eqn:psi2}).


The following lemma gives a condition for which the composition $\oph(\tilde\chi) \gh \oph(\chi)\phi_h$ is $O(h^\infty)$ where $\tilde\chi\in\cci(T^*H)$.
\begin{lemma} \label{lem:oh}
Let $\ct\in C^\infty (T^*H;[0,1])$ and $\chi\in\cci(T^*M;[0,1])$. Then 
\[
\oph(\ct)\gh \oph(\chi) \phi_h =O_{L^\infty(H)}(h^\infty)
\]
provided $\{\rho\in T^*_H M : \rho\in\supp\chi, \pi_{_{T^*H}}\rho\in\supp\ct\}=\emptyset$. 
\end{lemma}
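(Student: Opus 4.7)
The plan is to analyze the Schwartz kernel of $\oph(\ct)\gh\oph(\chi)$ directly via non-stationary phase. In local coordinates $(x',\bar x)$ adapted to $H=\{\bar x=0\}$, with dual coordinates $(\xi',\bar\xi)$, the composition has kernel
\[
K(x',y) \;=\; \frac{1}{(2\pi h)^{2n-k}}\int e^{i\Phi/h}\,\ct(x',\xi')\,\chi(z',0,\eta',\bar\eta)\, dz'\,d\xi'\,d\eta'\,d\bar\eta,
\]
with phase $\Phi = (x'-z')\cdot\xi' + (z'-y')\cdot\eta' - \bar y\cdot\bar\eta$. Differentiating in the integration variables gives $\partial_{z'}\Phi = \eta'-\xi'$, $\partial_{\xi'}\Phi = x'-z'$, $\partial_{\eta'}\Phi = z'-y'$, $\partial_{\bar\eta}\Phi = -\bar y$; the critical set is $\{x'=y'=z',\ \xi'=\eta',\ \bar y=0\}$, on which the amplitude reduces to $\ct(x',\xi')\chi(x',0,\xi',\bar\eta)$. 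The hypothesis is precisely the statement that this product vanishes identically.

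Since $\phi_h$ is compactly microlocalized I may, at the cost of an $O(h^\infty)$ error, replace $\ct$ by $\ct\cdot\tilde\beta$ for some $\tilde\beta\in\cci(T^*H)$ supported slightly beyond the microsupport of $\gh\phi_h$, so that the effective amplitude becomes compactly supported. On this compact joint support, the hypothesis combined with continuity produces a constant $c>0$ with
\[
F(x',z',\xi',\eta') := |x'-z'|^2 + |\xi'-\eta'|^2 \;\ge\; c,
\]
since any $(z',\eta')$ in the compact set $\pi_{_{T^*H}}(\supp\chi\cap T^*_HM)$ has positive distance from $\supp\ct$. I then use the transport operator
\[
L := \frac{-ih}{F}\bigl((x'-z')\cdot\partial_{\xi'} + (\eta'-\xi')\cdot\partial_{z'}\bigr),
\]
which satisfies $Le^{i\Phi/h} = e^{i\Phi/h}$ and has $\xi'$-derivatives, $z'$-derivatives, and coefficients uniformly bounded on the joint support. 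Integrating by parts $N$ times produces $(L^T)^N(\ct\chi)$, each factor pulling out a power of $h$, with all derivatives of $\ct$, $\chi$, and $1/F$ remaining bounded; hence $|K(x',y)| \le C_N h^N$ uniformly on compact sets for every $N$.

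The final $L^\infty(H)$ bound follows by Cauchy-Schwarz: $|(\oph(\ct)\gh\oph(\chi)\phi_h)(x')| \le \|K(x',\cdot)\|_{L^2}\,\|\phi_h\|_{L^2}$, where $\|K(x',\cdot)\|_{L^2} = O(h^N)$ follows from the pointwise bound together with the fact that $\chi\in\cci$ forces $K(x',\cdot)$ to be Schwartz-decaying in $\bar y/h$ (via the $\bar\eta$ integral). The main technical obstacle is handling the a priori non-compactness of $\supp\ct$ in $\xi'$; this is resolved by the microlocalization reduction above, or, alternatively, by observing that for $|\xi'|$ large one has $F\ge|\xi'-\eta'|^2\gtrsim|\xi'|^2$, which after sufficiently many integrations by parts makes the $\xi'$-integral absolutely convergent uniformly in $h$. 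Everything is then patched together by a finite partition of unity on $H$.
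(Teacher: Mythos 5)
Your proof is correct and uses the same underlying mechanism as the paper's — non-stationary phase via repeated integration by parts in the composition formula — but the choice of transport operator is genuinely different, and this difference is forced by your quantization convention. The paper uses right-type quantization so that the symbol of $\oph(\ct)$ appears as $\ct(z',\eta')$, sharing the base variable $z'$ with $\chi(z',0,\xi',\xib)$; the hypothesis then yields directly a uniform lower bound $|\xi'-\eta'|\geq c>0$ on the joint support, so integrating by parts in $z'$ alone suffices. You instead use left quantization, so the symbol appears as $\ct(x',\xi')$ with the \emph{external} variable $x'$; the hypothesis then only yields $|x'-z'|^2+|\xi'-\eta'|^2\geq c^2$, and you correctly compensate by integrating by parts in both $z'$ and $\xi'$ with the operator $L$ built from that combined weight. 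Both approaches are valid and lead to the same conclusion. One remark worth making: your worry about non-compactness of $\supp\ct$ is not really the bottleneck — the uniform lower bound on $F$ follows from compactness of $\supp\chi\cap T^*_HM$ (not of $\supp\ct$), and the decay of $1/F\sim|\xi'|^{-2}$ for large $|\xi'|$ handles the $\xi'$-integral without any microlocalization reduction. That alternative you offer is the cleaner route; the microlocalization step is awkward because it is unclear what control one has on $\wfs(\gh\phi_h)$ without first dealing with the conormal directions, and in any case it is not needed. Finally, to finish the $L^\infty$ bound via Cauchy--Schwarz you do need to record decay of $K(x',\cdot)$ in $\bar y$ from separate integration by parts in $\bar\eta$, as you note in passing; the paper instead bounds directly through $\|\phi_h\|_{L^1(M)}$.
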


\begin{proof}
We write $\oph(\ct)\gh \oph(\chi) \phi_h$ in coordinates:
\begin{multline*}
\oph(\ct)\gh \oph(\chi)\phi_h \\
=(2\pi h)^{k-2n} \iiint e^{\frac{i}{h} \langle x',\eta'  \rangle} e^{-\frac{i}{h} \langle y,\xi \rangle} \phi_h(y) \left( \int  e^{\frac{i}{h} \langle z',\xi'-\eta'  \rangle}  \ct(z',\eta') \chi(z',0,\xi',\xib)  dz' \right) dy\, d\xi \, d\eta'.
\end{multline*}
Consider the operator
\[
\mathcal{L}:=\frac{h\langle \xi'-\eta',D_{z'} \rangle}{|\xi'-\eta'|^2}
\]
which satisfies
\[
\mathcal{L}\, e^{\frac{i}{h} \langle z',\xi'-\eta'  \rangle} = e^{\frac{i}{h} \langle z',\xi'-\eta'  \rangle}. 
\]
We use $\mathcal{L}$ to repeatedly integrate by parts in the inner most integral. This is only possible provided $\xi'\not=\eta'$ on the support of $\ct \chi \big|_{\xb=0}$. However, we assumed that there are no points such that $(z',0,\xi',\xib)\in\supp \chi$ and $(z',\xi')\in\supp \ct$. Thus integrating by parts $N$ times using $\mathcal{L}$ in the $dz'$ integral we have
\begin{align*}
\Big| \int  e^{\frac{i}{h} \langle z',\xi'-\eta'  \rangle} & \ct(z',\eta') \chi(z',0,\xi',\xib)  dz' \Big| \\
 &=\left|  \int  \mathcal{L}^N e^{\frac{i}{h} \langle z',\xi'-\eta'  \rangle}  \ct(z',\eta') \chi(z',0,\xi',\xib)  dz' \right| \\
 &=  \left( \frac{h}{|\xi'-\eta'|^2} \right)^N \left| \int  e^{\frac{i}{h} \langle z',\xi'-\eta'  \rangle} \sum_{i_1,\dots,i_N} (\xi'_{i_1}-\eta'_{i_1})  \dots (\xi'_{i_N}-\eta'_{i_N}) D_{z'_{i_1},\dots,z'_{i_N}} (\ct \chi)  dz' \right| \\
 &\leq  \left( \frac{h}{|\xi'-\eta'|^2} \right)^N  \int C_N |\xi'-\eta'|^N \big|D_{z'}^N(\ct \chi)\big| dz' = C_N \left( \frac{h}{|\xi'-\eta'|} \right)^N  \int  \big|D_{z'}^N(\ct \chi)\big| dz'.   \\
\end{align*}
Furthermore we have
\begin{align*}
\Big| \oph(\ct)\gh & \oph(\chi) \phi_h  \Big| \\
& \leq C_N h^{N+k-2n} \iiint \frac{|\phi_h(y)|}{|\xi'-\eta'|^N}  \left( \int  \big|D_{z'}^N(\ct(z',\eta') \chi(z',0,\xi',\xib))\big| dz'\, \right) dy \,d\xi \, d\eta' \\
&=  C_N h^{N+k-2n} \|\phi_h\|_{L^1(M)} \iint f_N(\xi,\eta') |\xi'-\eta'|^{-N} d\xi \,d\eta' 
\end{align*}
where $f_N=\int  \big|D_{z'}^N(\ct(z',\eta') \chi(z',0,\xi',\xib))\big| dz'$ is smooth and compactly supported in $\xi$ since $\chi\in\cci(T^*M)$. Furthermore, since $\ct\chi$ is supported away from $\xi'=\eta'$ so is $f_N$. Also, since $\ct \chi$ is smooth and compactly supported in $z'$, we know the $dz'$ integral is finite. Moreover, for $N$ large enough $|\xi'-\eta'|^{-N}$ is highly localized in $\{|\xi'-\eta'|\leq 1\}$. The compactness in $\xi$ and this localization is enough to see that the last integral is finite and hence we have 
\[
\Big| \oph(\ct)\gh  \oph(\chi) \phi_h  \Big|  \leq C_{N,M} h^{N+k-2n}
\]
and hence $\oph(\ct)\gh \oph(\chi) \phi_h =O(h^\infty)$ as desired.
\end{proof}

Next, we show that away from $\sa$ the contributions from the generalized Fourier coefficients are negligible.
\begin{lemma}\label{lem:ohinf}
Let $\chi_{_\ssm}\in\cci(T^*M)$ such that $\chi_{_\ssm}\equiv 1$ on a neighborhood of $\ssm$ and supported in a neighborhood of $\ssm$. Similarly let $\chi_{_A}\in\cci(T^*H)$ such that $\chi_{_A}\equiv 1$ on a neighborhood of $A$ and supported in a neighborhood of $A$. Then
\begin{equation}
  h^{\frac{k-1}{2}} \langle  \phi_h, \psi_h   \rangle_{L^2(H)} =  h^{\frac{k-1}{2}} \langle  \gh \oph(\chi_{_\ssm})\phi_h , \oph(\chi_{_A}) \psi_h \rangle_{L^2(H)} + o(1) \, \text{ as } \, h\to 0^+.
\end{equation}
\end{lemma}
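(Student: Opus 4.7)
The plan is to decompose both $\phi_h$ and $\psi_h$ using their respective cutoffs and control the resulting cross-terms. Writing $\gh\phi_h = \gh\oph(\chi_{_\ssm})\phi_h + \gh(I-\oph(\chi_{_\ssm}))\phi_h$ and $\psi_h = \oph(\chi_{_A})\psi_h + (I-\oph(\chi_{_A}))\psi_h$, expanding the inner product $\langle \gh\phi_h,\psi_h\rangle_{L^2(H)}$ produces the desired main term plus two error terms
\[
E_1 := \langle \gh(I-\oph(\chi_{_\ssm}))\phi_h,\,\psi_h\rangle_{L^2(H)}, \qquad E_2 := \langle \gh\oph(\chi_{_\ssm})\phi_h,\,(I-\oph(\chi_{_A}))\psi_h\rangle_{L^2(H)}.
\]
It suffices to prove $h^{(k-1)/2}|E_j| = o(1)$ for $j=1,2$, which I will do via Cauchy--Schwarz after controlling each factor.

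For $E_2$: the hypothesis $\wfs(\psi_h) = A \Subset B^*H$ confines $\wfs(\psi_h)$ to a compact subset of $T^*H$, so $\{\psi_h\}$ is compactly microlocalized and there exists $\kappa \in \cci(T^*H)$ with $\kappa\equiv 1$ near $A$ and $(I-\oph(\kappa))\psi_h = O_{L^2(H)}(h^\infty)$. Since $\chi_{_A}\equiv 1$ on a neighborhood of $A$, the symbol $(1-\chi_{_A})\kappa$ is compactly supported in $T^*H\setminus A$; a standard partition of unity argument directly from the definition of $\wfs$ then gives $\oph((1-\chi_{_A})\kappa)\psi_h = O_{L^2(H)}(h^\infty)$, and therefore $(I-\oph(\chi_{_A}))\psi_h = O_{L^2(H)}(h^\infty)$. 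Combined with a Burq--G\'erard--Tzvetkov-type restriction estimate $\|\gh\oph(\chi_{_\ssm})\phi_h\|_{L^2(H)} \leq C h^{-(k-1)/2}$ (which applies because $\chi_{_\ssm}$ is a compactly supported cutoff near $\ssm$), Cauchy--Schwarz gives $h^{(k-1)/2}|E_2| = O(h^\infty) = o(1)$.

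For $E_1$: using compact microlocalization of $\phi_h$, pick $\tilde\chi\in\cci(T^*M)$ with $(I-\oph(\tilde\chi))\phi_h = O_{C^\infty}(h^\infty)$, so that $(I-\oph(\chi_{_\ssm}))\phi_h = \oph((1-\chi_{_\ssm})\tilde\chi)\phi_h + O_{L^2}(h^\infty)$. Because $\chi_{_\ssm}\equiv 1$ on $\ssm = \{p=0\}$, the symbol $(1-\chi_{_\ssm})\tilde\chi$ is compactly supported in $\{p\neq 0\}$, where $p$ is elliptic; hence the elliptic parametrix applied to $P(h)\phi_h = o_{L^2}(h)$ gives $\oph((1-\chi_{_\ssm})\tilde\chi)\phi_h = O_{L^2(M)}(h)$, i.e., $\|(I-\oph(\chi_{_\ssm}))\phi_h\|_{L^2(M)} = O(h)$. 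The trace bound $\|\gh v\|_{L^2(H)}\leq C h^{-k/2}\|v\|_{L^2(M)}$ for compactly microlocalized $v$ (provable directly by Plancherel on the fibers normal to $H$, using that the frequencies of $v$ are bounded by $C/h$) then yields $\|\gh(I-\oph(\chi_{_\ssm}))\phi_h\|_{L^2(H)} = O(h^{1-k/2})$, and Cauchy--Schwarz with $\|\psi_h\|_{L^2(H)}=1$ gives $h^{(k-1)/2}|E_1| = O(h^{1/2}) = o(1)$. The main technical care lies in quoting the correct semiclassical trace and restriction estimates; fortunately any trace bound of the form $\|\gh v\|_{L^2(H)} = O(h^{-k/2 - \varepsilon})\|v\|_{L^2(M)}$ for $\varepsilon<1/2$ is enough, so a slightly non-optimal bound still closes the argument.
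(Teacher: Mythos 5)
Your proof is correct and reaches the same conclusion, but the treatment of the second error term differs from the paper's in an interesting way.

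For $E_1$ your argument is essentially the same as the paper's treatment of the analogous term $I\!I\!I$: compact microlocalization of $\phi_h$ followed by an elliptic parametrix and the $h^{-k/2}$ semiclassical trace estimate, landing at $O(h^{1/2})=o(1)$. You sidestep the commutator bookkeeping in the paper by multiplying $(1-\chi_{_\ssm})\tilde\chi$ before applying the parametrix, which is a mild streamlining.

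For $E_2$ you take a genuinely different route. You observe that $\wfs(\psi_h)=A\Subset B^*H$ forces $\{\psi_h\}$ to be compactly microlocalized, and from there argue directly that $(I-\oph(\chi_{_A}))\psi_h=O_{L^2(H)}(h^\infty)$. The paper instead first invokes Lemma~\ref{lem:oh} (a non-stationary phase argument) to insert a compactly supported cutoff $\oph(\tilde\chi)$ on the $H$-side before Cauchy--Schwarz, so that only the compound operator $\oph(\tilde\chi)^*\oph(1-\chi_{_A})$ --- which has compactly supported semiclassical wavefront set --- needs to be shown to annihilate $\psi_h$, and this is done with an elliptic parametrix against a microlocal partition of unity. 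Your shortcut is valid because the hypothesis $A\Subset B^*H$ really does imply compact microlocalization, but you should flag the step from $\oph((1-\chi_{_A})\kappa)\psi_h=O(h^\infty)$ to $(I-\oph(\chi_{_A}))\psi_h=O(h^\infty)$: the full symbol of $(I-\oph(\chi_{_A}))\oph(\kappa)$ has subleading terms, though all of them are supported in $\supp\kappa\setminus\{\chi_{_A}=1\}$, a compact set disjoint from $A$, so the conclusion holds. Your claimed restriction estimate $\|\gh\oph(\chi_{_\ssm})\phi_h\|_{L^2(H)}\leq Ch^{-(k-1)/2}$ is not what the paper proves at this stage (the paper only uses the crude $h^{-k/2}$ Sobolev trace bound here; Burq--G\'erard--Tzvetkov gives $h^{-1/4}$, not $O(1)$, when $k=1$), but as you note, any polynomial bound is absorbed by the $O(h^\infty)$ factor. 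Net effect: your route is shorter; the paper's route is more self-contained (it already needs Lemma~\ref{lem:oh} elsewhere and does not lean on the compact-microlocalization-of-$\psi_h$ observation).
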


\begin{proof}
First we use $\oph(\chi_{_\ssm}),\oph(1-\chi_{_\ssm}),\oph(\chi_{_A})$ and $\oph(1-\chi_{_A})$ to split up the inner product:
\begin{align}
 h^{\frac{k-1}{2}} \langle   \phi_h, &\psi_h   \rangle_{L^2(H)}  \nonumber \\
& =   h^{\frac{k-1}{2}}  \langle \gh  \oph(\chi_{_\ssm})\phi_h,  \psi_h   \rangle_{L^2(H)}  +  h^{\frac{k-1}{2}} \langle \gh \oph(1-\chi_{_\ssm})\phi_h, \psi_h  \rangle_{L^2(H)}  \nonumber  \\
& =   h^{\frac{k-1}{2}} \langle \gh  \oph(\chi_{_\ssm})\phi_h , \oph(\chi_{_A}) \psi_h \rangle_{L^2(H)}  +   h^{\frac{k-1}{2}} \langle  \gh \oph(\chi_{_\ssm})\phi_h  , \oph(1-\chi_{_A}) \psi_h\rangle_{L^2(H)} \nonumber \\
&\qquad +  h^{\frac{k-1}{2}}  \langle \gh  \oph(1-\chi_{_\ssm})\phi_h,  \psi_h   \rangle_{L^2(H)} \nonumber  \\
&=: I+I\!I+I\!I\!I. \label{eqn:III}
\end{align}
We just need to show that both $I\!I$ and $I\!I\!I$ are $o(1)$ as $h\to 0^+$. We begin with $I\!I\!I$. First, since $\phi_h$ is compactly microlocalized, there exists a cutoff $\chi\in \cci(T^*M)$ such that $\oph(1-\chi)\phi_h=O_{C^\infty}(h^\infty)$. Using $\chi$, we split $I\!I\!I$ once more,
\begin{align*}
I\!I\!I &=  h^{\frac{k-1}{2}}  \langle\gh \oph(1-\chi_{_\ssm}) \oph(\chi) \phi_h,  \psi_h   \rangle_{L^2(H)} + h^{\frac{k-1}{2}}  \langle \gh \oph(1-\chi_{_\ssm}) \oph(1-\chi) \phi_h,  \psi_h   \rangle_{L^2(H)} \\
& \leq h^{\frac{k-1}{2}}  \| \gh \oph(1-\chi_{_\ssm}) \oph(\chi) \phi_h \|_{L^2(H)} +  h^{\frac{k-1}{2}}  \| \gh \oph(1-\chi_{_\ssm}) \oph(1-\chi) \phi_h \|_{L^2(H)},
\end{align*}
where we also used that $\|\psi_h\|_{L^2(H)} =1$. Using that $\phi_h$ is compactly microlocalized, we observe that the term with $\oph(1-\chi)\phi_h$ is $O(h^\infty)$. Next, for the other term, we use an elliptic parametrix to rewrite
\[
\oph(1-\chi_{_\ssm})=R(h)\, P(h)+O(h^\infty)_{\Psi^{-\infty}}.
\]
To do this, we need verify that $\wfs(1-\chi_{_\ssm})\subseteq\ellp(P(h))$. Since $1-\chi_{_\ssm}$ does not depend on $h$, $\wfs(1-\chi_{_\ssm})=\esssupp(1-\chi_{_\ssm})\subseteq (\ssm)^c$. Moreover $\ellp(P(h))=\{p\not=0\}=(\ssm)^c$, and hence we have the inclusion necessary to use an elliptic parametrix. Therefore, we can write
\begin{align*}
h^{\frac{k-1}{2}} & \| \gh \oph(1-\chi_{_\ssm})  \oph(\chi) \phi_h\|_{L^2(H)}  \\
&= h^{\frac{k-1}{2}} \| \gh R(h) P(h) \oph(\chi) \phi_h\|_{L^2(H)}+ O(h^\infty) \\
& \leq  h^{\frac{k-1}{2}} \| \gh R(h)  \oph(\chi) P(h) \phi_h\|_{L^2(H)}+  h^{\frac{k-1}{2}} \| \gh R(h) \big(h \oph(H_p\chi)+O(h^2) \big) \phi_h\|_{L^2(H)}+  O(h^\infty) \\
& \leq C_k h^{-1/2}\| P(h) \phi_h\|_{L^2(M)} + C_k h^{1/2}  \|  \phi_h\|_{L^2(M)}+  O(h^\infty)
\end{align*}
where in the last line we used the standard restriction bound
\begin{equation}\label{eqn:stdrb}
\| \gh \oph(\kappa)u\|_{L^2(H)}\leq C_\gamma h^{-k/2}\| \oph(\kappa) u\|_{H_h^\gamma(M)} \leq C_k h^{-k/2} \|u\|_{L^2(M)}
\end{equation}
for $\gamma>k/2$, and $\kappa\in\cci(T^*M)$.   By (\ref{eqn:quasi}) we know $h^{-1}\|P(h)\phi_h\|_{L^2(M)}\to 0$ as $h\to 0^+$ and $\|\phi_h\|_L^2(M)=1$, and thus we obtain
\[
h^{\frac{k-1}{2}}  \| \gh \oph(1-\chi_{_\ssm})  \oph(\chi) \phi_h\|_{L^2(H)} =o(1) \, \text{ as } \, h\to 0^+
\]
as desired.

Next we show $I\!I$ is $O(h^\infty)$. To do so we first claim that there exists $\ct\in\cci(T^*H;[0,1])$ such that
\begin{equation}
\gh \oph(\chi_{_\ssm})\phi_h=\oph(\ct)\gh \oph(\chi_{_\ssm})\phi_h+O(h^\infty) \label{eqn:chit}.
\end{equation}
Using Lemma \ref{lem:oh} we find that we get (\ref{eqn:chit}) if we take $\ct(z',\xi')\equiv 1$ on a small neighborhood, $\mathcal{U}$, of $\{|\xi'|_{g_{_H}} \leq 1\}$ and supported in a small neighborhood of $\mathcal{U}$. Using  (\ref{eqn:chit}) we show $I\!I$ is $O(h^\infty)$. We rewrite
\begin{equation}\label{eqn:II}
 I\!I = \langle  \oph(\tilde{\chi}) \gh \oph(\chi_{_\ssm})\phi_h , \oph(1-\chi_{_A})\psi_h   \rangle_{L^2(H)} +  O(h^\infty) 
\end{equation}
Next observe
\begin{align*}
| \langle  \oph(\tilde{\chi}) & \gh \oph(\chi_{_\ssm})\phi_h , \oph(1-\chi_{_A})\psi_h   \rangle_{L^2(H)}  | \\
& \leq \| \gh \oph(\chi_{_\ssm})\phi_h \|_{L^2(H)} \|  \oph(\tilde{\chi})^*  \oph(1-\chi_{_A})\psi_h   \|_{L^2(H)}    \\
&\leq  C_k h^{-\frac{k}{2}}    \|  \oph(\tilde{\chi})^*  \oph(1-\chi_{_A})\psi_h   \|_{L^2(H)}
\end{align*}
where the last inequality follows from the standard restriction bound (\ref{eqn:stdrb}).
Recall $A=\wfs(\psi_h)$ and $\tilde\chi(x',\xi')$ is compactly supported in a neighborhood of $\{|\xi'|_{g_{_H}}\leq 1\}$. Let $K$ denote the support of $\tilde\chi$. There exists $\rho_j\in \overline{A^c\cap K}$ for $j=1,\dots N$ and $\theta_j\in\cci(T^*H;[0,1])$ supported sufficiently close to $\rho_j$ such that
\[
\|\oph(\theta_j)\psi_h\|_{L^2(H)}=O(h^\infty),
\]
and moreover 
\[
\Theta:=\sum_{j=1}^N \theta_j \equiv 1 \text{ on  } \overline{A^c\cap K}.
\]
We use an elliptic parametrix to rewrite
\[
\oph(\ct)^* \oph(1-\chi_{_A})=R(h) \, \oph(\Theta)+O(h^\infty)_{\Psi^{-\infty}}
\]
which we are allowed to do since $\wfs(\oph(\tilde\chi)^* \oph(1-\chi_{_A}))\subseteq \ellp(\Theta)$. To see this, note by properties of wavefront sets
\[
\wfs(\oph(\ct)^* \oph(1-\chi_{_A})) 
=\wfs(\oph(\ct)) \cap \wfs( \oph(1-\chi_{_A})) 
\subseteq K \cap A^c.
\]
Furthermore, $\ellp(\Theta) \supseteq \overline{A^c\cap K}$, and hence we have the inclusion needed to use the elliptic parametrix. Lastly, we have
\begin{align*}
\| \oph(\tilde\chi)^* \oph(1-\chi_{_A}) \psi_h \|_{L^2(H)}&= \| R(h)\, \oph(\Theta) \psi_h\|_{L^2(H)} +O(h^\infty) \\
& \leq  \| R(h) \|_{L^2\to L^2} \sum_{j=1}^N \| \oph(\theta_j) \psi_h \|_{L^2(H)}+O(h^\infty) =O(h^\infty).
\end{align*}
\end{proof}


\section{Localization to Geodesic Tubes: Proof of Proposition \ref{prop} } \label{sec:proofs}
In this section we finally present the proof of Proposition \ref{prop}.  Once again, throughout this section we assume $\{\phi_h\}$ is a compactly microlocalized collection of quasimodes on $M$ satisfying (\ref{eqn:quasi}) with defect measure $\mu$. We also assume that the sequence of functions $\{\psi_h\}$  on $H$ have defect measure $\nu$ and satisfy (\ref{eqn:psi}) and (\ref{eqn:psi2}). In the following we use coordinates $x=(x',\xb)$ such that $H=\{\xb=0\}$. Furthermore we write $\xb=(\xb_1,\xb_2,\dots,\xb_k)=(\xb_1,\xt)$.

We will need a few lemmas before proving the Proposition.

\subsection{A Technical Lemma}

\begin{lemma} \label{lem:13} Fix $\rho_0\in \sa$ and let $q\in\cci(\R_{\xb_1} \times \R_{\tilde{\xi}}^{k-1}  )$. 
There exists $T_0,R_0>0$ such that for all $0<T<T_0$ and $0<R<R_0$, if $\chi\in \cci(T^*M)$ is such that $\supp\chi\subseteq \T^{3T}(U)$ and $\supp H_p \chi \subset \T^{3T}(U)\setminus \T^{2T}(U)$, where $U\subseteq B_\mathscr{L}(\rho_0,R)$, then we have,
\begin{align*}
\|  \oph(q)  \oph(\chi) \phi_h(x',0,\xt)\|_{L^2_{x',\xt}} &\leq C\left( \frac{1}{\sqrt{T\hprho}} + \sqrt{2T} \right) \|\oph(\chi)\oph(q)\phi_h\|_{L^2_{x}} \\
&\qquad +  \frac{C \sqrt{2T}}{h} \Big( \|P\phi_h\|_{L^2_x}+\| \oph(\chi) [P,\oph(q)]\phi_h\|_{L^2_x}\Big)  +C_T h^{1/2} \| \phi_h \|_{L^2_x}.
\end{align*}
\end{lemma}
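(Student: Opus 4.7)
The plan is to execute a geodesic-beam style trace estimate, reducing the $L^2$ norm of $v := \oph(q)\oph(\chi)\phi_h$ on the non-characteristic hypersurface $\{\bar{x}_1 = 0\}$ to a bulk $L^2$ estimate on a strip, plus boundary corrections controlled through the quasimode equation and the hypothesis $\supp H_p\chi \subset \T^{3T}(U)\setminus\T^{2T}(U)$. Set $F(s) := \int |v(x', s, \tilde{x})|^2\, dx'\, d\tilde{x}$, so the target is $\sqrt{F(0)}$. Pick a non-negative cutoff $\chi_{T'} \in C_c^\infty([-T',T'])$ of total mass one with $\|\chi_{T'}\|_\infty \leq C/T'$, where I take $T' := 2T\,\hprho$; this matches the fact that a flow-tube of time-length $T$ emanating from $\mathscr{L}$ crosses the level set $\{\bar{x}_1=s\}$ transversally for $|s| \leq T\,\hprho$. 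The fundamental theorem of calculus in $\bar{x}_1$ then gives
\[
F(0) \;=\; \int \chi_{T'}(s)\, F(s)\, ds \;-\; \int \chi_{T'}(s) \int_0^s F'(\sigma)\, d\sigma\, ds.
\]

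The first (\emph{bulk}) integral is bounded by $(C/T')\|v\|_{L^2(M)}^2$. Reordering $\oph(q)\oph(\chi)$ as $\oph(\chi)\oph(q)$ costs an $O_{L^2\to L^2}(h)$ commutator that I absorb into the final $C_T h^{1/2}\|\phi_h\|_{L^2}$ tail, thereby producing the $(T\,\hprho)^{-1/2}\|\oph(\chi)\oph(q)\phi_h\|_{L^2}$ contribution.

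For the \emph{boundary} integral, $F'(\sigma) = 2\operatorname{Re}\int \bar{v}\,\partial_{\bar{x}_1} v\, dx'\, d\tilde{x}$. The key step is to trade $h\partial_{\bar{x}_1}v$ for a combination of $P(h)v$ and tangential terms. Expanding
\[
P(h) v \;=\; \oph(q)\oph(\chi) P(h)\phi_h \;+\; \oph(\chi)[P(h),\oph(q)]\phi_h \;+\; \oph(q)[P(h),\oph(\chi)]\phi_h \;+\; O_{L^2\to L^2}(h^2),
\]
the third commutator is $O(h)$ and microsupported in $\T^{3T}(U)\setminus \T^{2T}(U)$, so it contributes to the $C_T h^{1/2}\|\phi_h\|_{L^2}$ tail. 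For $R_0, T_0$ small, $\supp\chi$ lies close to $\rho_0$ where $\partial_{\bar{\xi}_1}p$ is bounded away from zero, so the principal part of $h\partial_{\bar{x}_1}$ can be expressed via $P(h)$ plus tangential derivatives whose $L^2$ norms are controlled by that of $v$. Cauchy–Schwarz over $(s,\sigma) \in [-T',T']^2$ contributes $\sqrt{T'}$, which pairs with the $\hprho^{-1/2}$ from the symbol inversion to leave $\sqrt{2T}$. The first two $P(h)$-pieces then produce the $\frac{C\sqrt{2T}}{h}(\|P\phi_h\|_{L^2_x} + \|\oph(\chi)[P,\oph(q)]\phi_h\|_{L^2_x})$ contributions, while the tangential correction yields the $C\sqrt{2T}\|\oph(\chi)\oph(q)\phi_h\|_{L^2_x}$ piece.

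The main technical obstacle is the symbol-calculus bookkeeping required to invert the principal symbol of $P$ in $\bar{\xi}_1$ on the microsupport of $\chi$ and to track where each remainder lands: tangential errors into the bulk-norm bucket, quasimode defects into $\|P\phi_h\|$, $[P,\oph(q)]$-commutators into the middle term, and thin-shell errors from $H_p\chi$ into the $C_T h^{1/2}\|\phi_h\|_{L^2}$ tail. The balance between the transversality factor $\hprho$ and the length of the $(s,\sigma)$ integration must be performed carefully so that the cancellation producing the bare $\sqrt{2T}$ factor (rather than $\sqrt{2T\,\hprho}$) goes through.
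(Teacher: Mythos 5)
Your FTC-in-$\bar{x}_1$ strategy is, at bottom, a repackaging of what the paper does (solving the same first-order ODE in $\bar{x}_1$ by an explicit integrating factor and averaging over an interval of length $\delta=T\hprho$); the ``bulk plus boundary'' split mirrors the paper's $I+I\!I$, and the quantitative ingredients --- transversality width $T\hprho$, Cauchy--Schwarz in the $\bar{x}_1$ integral, decomposing $P(h)v$ by commutators, routing the thin-shell $H_p\chi$ errors into the $C_T h^{1/2}$ tail --- all line up. So you are not taking a different route.

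There is, however, a genuine gap in how you handle the boundary integral, and it is precisely the one step that makes the lemma non-trivial. After writing
\[
F'(\sigma)=2\operatorname{Re}\int \bar v\,\partial_{\bar x_1}v\,dx'\,d\tilde x
\]
and trading $h\partial_{\bar{x}_1}v$ for ``$P(h)v$ plus tangential'', the tangential term is $\tfrac{2}{h}\operatorname{Re}\,i\langle\oph(a)v,v\rangle_{L^2_{\hat x}}$ with $a$ a real symbol, so naively it is of size $h^{-1}\|v\|^2$; after integrating in $\sigma$ and $s$ over an interval of length $\sim T\hprho$ this would contribute $\sim h^{-1}T\hprho\,\|v\|^2$ to $F(0)$, which ruins the estimate. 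The reason the lemma is true is that this leading $h^{-1}$ contribution \emph{cancels exactly}: because $a$ is real, $\oph(a)=\tilde A+h\tilde R$ with $\tilde A$ self-adjoint, so $\langle\tilde A v,v\rangle$ is real and $\operatorname{Re}\,i\langle\tilde A v,v\rangle=0$; in the paper's formulation this is the unitarity of the integrating factor $e^{\frac{i}{h}A(t,s,\hat x)}$ on $L^2_{\hat x}$. Your proposal never invokes this cancellation --- on the contrary, you attribute the $\sqrt{2T}\,\|\oph(\chi)\oph(q)\phi_h\|_{L^2_x}$ term in the conclusion to ``the tangential correction,'' which misidentifies its source: after the cancellation, that term comes from the $O(h)$ sub-principal remainders in the factorization $p=e(\bar\xi_1-a)$ and in the symmetrization of $\oph(a)$, not from the tangential derivative itself. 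As written, the argument would deliver a bound worse by a factor of $h^{-1}$, so the self-adjointness step must be made explicit for the proof to close.
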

The proof of Lemma \ref{lem:13} is very similar to \cite[Lemma 13]{CG}, but we include it for completeness.

\begin{proof}Fix $\rho_0\in \sa$. Then, as before, we have $\partial_{\xib_1}p(\rho_0)>0$. Let $\mathcal{O}$ be an open neighborhood of $\rho_0$ such that  $\partial_{\xib_1}p>0$ on $\mathcal{O}$. Furthermore, let  $\T^{3T}(U)$ be a tube contained in $\mathcal{O}$. Then we can write
\[
p(x,\xi)=e(x,\xi) \big( \xib_1-a(x,\xi',\xit) \big) \, \, \text{ for } \, \, (x,\xi)\in\mathcal{O}
\]
where $e$ is elliptic on $\mathcal{O}$. Thus for $\ct\equiv 1$ on $\T^{3T}(U)$ and supported in $\mathcal{O}$ we have
\[
p(x,\xi) \ct(x,\xi)=e(x,\xi) \big( \xib_1-a(x,\xi',\xit) \big) \ct(x,\xi).
\]
Using the notation $P=\oph(p)$, observe
\begin{align*}
P\oph(\chi) &=P\oph(\ct)\oph(\chi)+O(h^\infty)\\
&= \left( \oph(p \ct) + h \oph(r_1) \right) \oph(\chi)+O(h^\infty) \\ 
&= \left(  \oph( e) \oph( (\xib_1 -a(x,\xi',\xit))  \ct ) + h \oph(r_2) +h \oph(r_1)\right) \oph(\chi) +O(h^\infty) \\ 
&= \oph(e) \Big( h D_{\xb_1} -\oph\big(a(x,\xi',\xit) \big) \Big) \oph(\chi)+h \oph(r)\oph(\chi) +O(h^\infty). 
\end{align*}
Thus
\[
\Big( h D_{\xb_1} -\oph\big(a(x,\xi',\xit) \big) \Big) \oph(\chi)\oph(q) \phi_h = \oph(e)^{-1} \Big( P\oph(\chi)\oph(q)-h\oph(r)\oph(\chi) \oph(q)  \Big) \phi_h
\]
where $\oph(e)^{-1}$ denotes a microlocal parametrix for $\oph(e)$ near $\supp\chi$. Since $a$ is a real symbol, we know that $\oph\big(a(x,\xi',\xit) \big)$ is an error of order $h$ away from being self adjoint. Therefore we can replace $\oph(a)$ with $\tilde{A}+h \tilde{R}$ where $\tilde{A}$ is self adjoint. Therefore we have
\[
\Big( h D_{\xb_1} -\tilde{A}\Big) \oph(\chi)\oph(q) \phi_h = \oph(e)^{-1} \Big( P\oph(\chi)\oph(q)-h\oph(r)\oph(\chi) \oph(q)  \Big) \phi_h +h\tilde{R} \oph(\chi)\oph(q) \phi_h
\]
We set 
\begin{align*}
u &:= \oph(\chi)\oph(q) \phi_h \\
f &:= \oph(e)^{-1} \Big( P\oph(\chi)\oph(q)-h\Big(\oph(r)-\oph(e)\tilde{R}   \Big)\oph(\chi) \oph(q)  \Big) \phi_h. 
\end{align*}
To later utilize the fact that $P\phi_h=o_{L^2(M)}(h)$ we rewrite $f$ as
\begin{align*}
f &= \oph(e)^{-1} \Big( \oph(\chi) \oph(q ) P  + [P,\oph(\chi)]\oph(q)   + \oph(\chi)[ P, \oph(q)]  \\
&\qquad -h \big( \oph(r)   -\oph(e)\tilde{R} \big) \oph(\chi)\oph(q)  \Big)\phi_h 
\end{align*}
Thus we have a differential equation for $u$:
\[
\Big( \partial_{\xb_1} - \frac{i}{h}\tilde{A}  \Big) u=\frac{i}{h}f
\]
To simplify notation, we write $\hat{x}$ to denote both $x'$ and $\tilde{x}$ and similarly $\hat{\xi}$ for $\xi',\xit$. First we define
\[
A(t,s,\hat{x}):=\int_t^s \tilde{A}(\xb_1,\hat{x},\hat{\xi}) d\xb_1
\]
We obtain
\begin{equation}\label{eqn:usol}
u(s,\xh)=e^{\frac{i}{h} A(t,s,\xh)}u(t,\xh)+\frac{i}{h}  \int_t^s  e^{-\frac{i}{h} A(s,\tau,\xh)}  f(\tau,\xh) d\tau.
\end{equation}
Next, define $\delta:=T \hprho$ and note for $T>0$
\[
0< \delta = T |\partial_{\xib}p(\rho_0)|= 2 T\sqrt{1-|\xi'_0|^2_{g_{_H}(x'_0)}} <2 T \qquad \text{where } \, \rho_0=(x_0',0,\xi_0',\xib_0)\in\sa.
\]
Further, let $\Phi(t)\in\cci(\R;[0,2\delta^{-1}])$ with $\supp \Phi\subseteq[0,\delta]$ and $\|\Phi\|_{L^1_t}=1$. Multiplying (\ref{eqn:usol}) through by $\Phi(t)$ and integrating in $t$ we have
\begin{align*}
u(s,\xh) &=\int_\R \Phi(t) u(s,\xh) dt  \\
&= \int_\R \Phi(t) e^{\frac{i}{h} A(t,s,\xh)}u(t,\xh) dt +\frac{i}{h} \int_\R \Phi(t) \int_t^s  e^{-\frac{i}{h} A(s,\tau,\xh)}  f(\tau,\xh) d\tau \, dt. 
\end{align*}
Next, taking the $L^2_{\xh}$ norm
\begin{align*}
\| u(s,\xh) \|_{L^2_{\xh}} & \leq \int_\R \Phi(t)  \left\| e^{\frac{i}{h} A(t,s,\xh)}u(t,\xh) \right\|_{L^2_{\xh}} dt +\frac{1}{h} \int_\R \Phi(t) \int_t^s  \left\|e^{-\frac{i}{h} A(s,\tau,\xh)}  f(\tau,\xh)  \right\|_{L^2_{\xh}} d\tau \, dt  \\
&= \int_\R \Phi(t)  \left\| u(t,\xh) \right\|_{L^2_{\xh}} dt +\frac{1}{h} \int_\R \Phi(t) \int_t^s  \left\|  f(\tau,\xh)  \right\|_{L^2_{\xh}} d\tau \, dt  
=: I + I\!I
\end{align*}
where the last line follows from
\[
\partial_s \left\| e^{\frac{i}{h} A(t,s,\xh)} u(t,\xh)  \right\|_{L^2_{\xh}}^2=2 \re \left\langle  \frac{i}{h} \tilde{A} e^{\frac{i}{h} A(t,s,\xh)} u(t,\xh) , e^{\frac{i}{h} A(t,s,\xh)} u(t,\xh)\right\rangle_{L^2_{\xh}}=0
\]
since $\tilde{A}$ is self adjoint. So $\| e^{\frac{i}{h} A(t,s,\xh)} \|_{L^2_{\xh}}=\| e^{\frac{i}{h} A(t,t,\xh)} \|_{L^2_{\xh}} = 1$.
Using H\"{o}lder's inequality and properties of $\Phi$ we bound $I$:
\[
I\leq \|\Phi\|_{L^2_t} \|u(t,\xh)\|_{L^2_{\xh,t}} \leq \frac{4}{\sqrt{\delta}} \|u(t,\xh)\|_{L^2_{\xh,t}} .
\]
To find a bound for $I\!I$, we first take the $L^\infty$ norm in $s$ and apply H\"{o}lder's inequality to get
\begin{equation}\label{eqn:IIf}
I\!I   \leq \frac{1}{h} \int \left\|  \mathds{1}_{[0,\delta]}(t) \mathds{1}_{[s,t]}(\tau) \right\|_{L^\infty_{t,s}} \|  f(\tau,\xh)\|_{L^2_{\xh}}  d \tau. 
\end{equation}
Splitting $f$ up into its components in (\ref{eqn:IIf}) we see that the first term is
\[
\frac{1}{h}  \int \left\|  \mathds{1}_{[0,\delta]}(t) \mathds{1}_{[s,t]}(\tau) \right\|_{L^\infty_{t,s}} \|  \oph(e)^{-1} \oph(\chi) \oph(q)P \phi_h(\tau,\xh)\|_{L^2_{\xh}}  d \tau 
\]
which is bounded by $C \sqrt{\delta} h^{-1}\| P \phi_h \|_{L_x^2}$.  We also have $\tau\leq t \leq \delta < 2T$, and recall that $\supp H_p\chi \equiv 0$ on $\{|\xb_1|\leq 2T\}$. Thus we can bound the second term by, 
\[
\| \oph(e)^{-1} [P,\oph(\chi)] \oph(q)\phi_h(\tau,\xh) \|_{L^2(\tau\in[-2T,2T],{\xh})} \leq C_T h^2 \| \phi_h \|_{L^2_{\xh}}.
\]
Continuing we obtain
\begin{align*}
I\!I 
&\leq \frac{C \sqrt{\delta}}{h} \Big( \|P\phi_h\|_{L^2_x}+ C_T h^2 \| \phi_h\|_{L^2_{\xh}} + \|\oph(e)^{-1} \oph(\chi) [P,\oph(q)]\phi_h\|_{L^2_x}  \\
&\qquad + h \| \oph(e)^{-1} \Big( \oph(r)-\oph(e)\tilde{R} \Big)\oph(\chi)\oph(q)\phi_h \|_{L^2_x} \Big) \\
& \leq \frac{C \sqrt{\delta}}{h}\|P \phi_h\|_{L^2_x}+ C_\delta \sqrt{\delta} h^{1/2} \| \phi_h\|_{L^2_{x}}+ \frac{C \sqrt{\delta}}{h} \| \oph(\chi) [P,\oph(q)]\phi_h\|_{L^2_x} + C \sqrt{\delta} \| \oph(\chi)\oph(q)\phi_h \|_{L^2_x} 
\end{align*}
where we used the standard estimate $\| \phi_h \|_{L^2_{\xh}} \leq C h^{-1/2} \| \phi_h \|_{L^2_x}$ in the last line.
So finally, combining the bounds for $I$ and $I\!I$ and rewriting $u$ as $\oph(\chi)\oph(q)\phi_h$ we have
\begin{align}
 \| \oph(\chi)  \oph(q)\phi_h(x',0,\xt)\|_{L^2_{x',\xt}} & \leq C\left( \frac{1}{\sqrt{T\hprho}} + \sqrt{2T} \right) \|\oph(\chi)\oph(q)\phi_h\|_{L^2_{x}} \nonumber \\
&  \qquad +  \frac{C \sqrt{2T}}{h} \Big( \|P\phi_h\|_{L^2_x}+ \| \oph(\chi) [P,\oph(q)]\phi_h\|_{L^2_x} \Big) + C_T  h^{1/2} \| \phi_h\|_{L^2_{x}}.  \label{eqn:chiq}
\end{align}
Therefore, using a commutator and the bound in (\ref{eqn:chiq}) we have
\begin{align*}
\| \oph(q)  \oph(\chi) \phi_h(x',0,\xt)\|_{L^2_{x',\xt}} & \leq  \|  \oph(\chi) \oph(q) \phi_h(x',0,\xt)\|_{L^2_{x',\xt}}  +  \| [ \oph(q), \oph(\chi) ]   \phi_h(x',0,\xt)\|_{L^2_{x',\xt}}  \\
&\leq C\left( \frac{1}{\sqrt{T\hprho}} + \sqrt{2T} \right) \|\oph(\chi)\oph(q)\phi_h\|_{L^2_{x}} \\
& \qquad  +  \frac{C \sqrt{2T}}{h} \Big( \|P\phi_h\|_{L^2_x}+\| \oph(\chi) [P,\oph(q)]\phi_h\|_{L^2_x}\Big)  +C_T h^{1/2} \| \phi_h \|_{L^2_x}, 
\end{align*}
where the estimate on the commutator term comes from the Sobolev embedding estimate:
\begin{align*}
  \| [ \oph(q), \oph(\chi) ]   \phi_h(x',0,\xt)\|_{L^2_{x',\xt}} & \leq h \| \oph(H_q \chi) \phi_h(x',0,\xt) \|_{L^2_{x',\xt}}  + O(h^2)\|\phi_h(x',0,\xt) \|_{L^2_{x',\xt}} \\
  & \leq C h \|  \phi_h(x',0,\xt) \|_{L^2_{x',\xt}}  \leq C h^{1/2} \| \phi_h \|_{L^2_x}
\end{align*}
which we regroup with the existing $O(h^{1/2})$ term.
\end{proof}

\subsection{Further localizing to Tubes}

The proof of Proposition \ref{prop} relies on decomposing $\supp \chi|_{_\sa}$ into many small "rectangles." Using the geodesic flow, we then extend the rectangles to create a collection of geodesic tubes covering $\supp \chi|_{_{\sa}}$. We get a much finer estimate on these tubes, which is given in the lemma below.

\begin{lemma} \label{lem} Fix $\rho_0=(x_0,0,\xi'_0,\xib_0)\in \sa$. There exist $T_0,R_0>0$ such that for all $0<T<T_0$ and $0<R<R_0$, if $U\subseteq \mathscr{L}$ is a neighborhood of $\rho_0$ contained in $B_\mathscr{L}(\rho_0,R)$, and $\chi\in \cci(T^*M)$ is such that $\supp\chi\subseteq \T^{3T}(U)$ and $\supp H_p \chi \subset \T^{3T}(U)\setminus \T^{2T}(U)$, then there exists a constant $C_k$ depending only on $k$ for which
\[
\limsup_{h\to 0^+} h^{k-1} \|\oph(\chi)\phi_h \|^2_{L^2(H)} \leq  \frac{C_k R^{k-1}}{ 2 T\sqrt{1-|\xi'_0|^2_{g_{_H}(x'_0)}}} \int_{T^*M} |\chi|^2 d\mu.
\]
\end{lemma}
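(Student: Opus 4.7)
The plan is to reduce $\|\oph(\chi)\phi_h\|^2_{L^2(H)}$ to an interior $L^2$ norm on the hyperplane $\{\bar x_1 = 0\}$ via Fourier inversion in the remaining normal variables $\tilde x = (\bar x_2, \ldots, \bar x_k)$, apply Lemma \ref{lem:13} to transfer that norm to an $L^2(M)$ norm of a microlocal cutoff of $\phi_h$, and pass to the limit $h\to 0^+$ using the defect measure $\mu$. The Plancherel step will produce the factors $C_k R^{k-1}$ and $h^{-(k-1)}$; Lemma \ref{lem:13} will contribute the factor $(T|\partial_{\bar\xi_1}p(\rho_0)|)^{-1}$, which equals $(2T\sqrt{1-|\xi'_0|^2_{g_{_H}(x'_0)}})^{-1}$ in Fermi-type coordinates adapted to $H$; the defect-measure limit will turn the remaining $L^2(M)$ norm into $\int|\chi|^2\, d\mu$.

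Concretely, first pick Fermi-type coordinates at $x'_0$ and rotate the normal coordinates $\bar x$ so that $\tilde\xi_0=0$ and $\bar\xi_{0,1}>0$; then $|\partial_{\bar\xi_1}p(\rho_0)|=2\bar\xi_{0,1}=2\sqrt{1-|\xi'_0|^2_{g_{_H}(x'_0)}}$. Choose a symbol $q(\bar x_1,\tilde\xi)=\phi(\bar x_1)q_0(\tilde\xi)\in\cci$ with $\phi\equiv 1$ on $|\bar x_1|\le CT$, $q_0\equiv 1$ on an open neighborhood of $\pi_{\tilde\xi}(\supp\chi)$, and $|\supp q_0|\le C_k R^{k-1}$; this is possible because $\supp\chi\subseteq\T^{3T}(B_\mathscr{L}(\rho_0,R))$ projects into a box of size $O(T)\times O(R)^{k-1}$ in $(\bar x_1,\tilde\xi)$-space when $T,R$ are small. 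Since $q$ is identically $1$ together with all its derivatives on $\supp\chi$, the composition formula yields $\oph(q)\oph(\chi)=\oph(\chi)+O_{\Psi^{-\infty}}(h^\infty)$. Combining Fourier inversion in $\tilde x$ with Cauchy--Schwarz on $\supp q_0$ for $\oph(q)\oph(\chi)\phi_h$ evaluated at $\bar x=0$ then produces
\[
h^{k-1}\|\oph(\chi)\phi_h\|^2_{L^2(H)} \le C_k R^{k-1}\|\oph(\chi)\phi_h(\,\cdot\,,0,\,\cdot\,)\|^2_{L^2_{x',\tilde x}} + O(h^\infty).
\]
Invoking Lemma \ref{lem:13} with this $q$ then bounds the right-hand side. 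Its three error terms all vanish in the limit: $h^{-1}\|P\phi_h\|_{L^2}\to 0$ by (\ref{eqn:quasi}); $C_T h^{1/2}\|\phi_h\|_{L^2}$ is $o(1)$; and since $\chi\cdot H_p q\equiv 0$ (because $q$ is locally constant on $\supp\chi$), the composition formula gives $\oph(\chi)[P,\oph(q)]=O_{L^2\to L^2}(h^2)$, so $h^{-1}\|\oph(\chi)[P,\oph(q)]\phi_h\|_{L^2}=O(h)$. The defect-measure identity $\lim_{h\to 0^+}\|\oph(\chi)\oph(q)\phi_h\|^2_{L^2(M)}=\int|\chi q|^2\,d\mu=\int|\chi|^2\,d\mu$ (using $q\equiv 1$ on $\supp\chi$), together with the inequality $\bigl(\tfrac{1}{\sqrt{T|\partial_{\bar\xi_1}p(\rho_0)|}}+\sqrt{2T}\bigr)^2\le \tfrac{C_k}{T|\partial_{\bar\xi_1}p(\rho_0)|}$ valid for $T\le T_0$ small enough, then yields the claimed bound.

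The main technical obstacle is the simultaneous triple constraint on $q$: it must (a) act as the identity on $\oph(\chi)\phi_h$ modulo $O(h^\infty)$, so that the Plancherel step controls $\|\oph(\chi)\phi_h\|^2_{L^2(H)}$; (b) have $\tilde\xi$-support of volume $\lesssim R^{k-1}$ so as to produce the correct power of $R$; and (c) make $\oph(\chi)[P,\oph(q)]$ of order $h^2$ rather than merely $h$, since Lemma \ref{lem:13} divides this term by $h$ and a genuinely $O(h)$ commutator would prevent the error from vanishing. Taking $q\equiv 1$ on a neighborhood of $\pi_{\bar x_1,\tilde\xi}(\supp\chi)$ simultaneously secures (a) and (c), and is compatible with the volume bound (b) precisely because $\pi_{\tilde\xi}(\supp\chi)$ has diameter $O(R)$ when $R,T$ are sufficiently small.
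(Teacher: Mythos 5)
Your overall strategy — Plancherel in $\tilde x$ plus Cauchy–Schwarz to produce the $R^{k-1}$ factor, then Lemma \ref{lem:13} to move to an $L^2(M)$ norm and pass to the defect measure — is in the right spirit, but there is a genuine gap in the justification of the volume bound $|\supp q_0|\le C_k R^{k-1}$.

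You choose $q_0\in\cci(\R^{k-1}_{\tilde\xi})$ with $q_0\equiv 1$ on a neighborhood of $\pi_{\tilde\xi}(\supp\chi)$ and claim $\pi_{\tilde\xi}(\supp\chi)$ has diameter $O(R)$. That is not true in general. Since $\supp\chi\subseteq\T^{3T}(U)$ with $U\subseteq B_{\mathscr{L}}(\rho_0,R)$, a point $\rho$ in the tube lies within distance $O(R)$ of some point $\gamma_{\rho_0}(t)$ of the central geodesic, but the geodesic itself drifts: from Hamilton's equations $\dot{\tilde\xi}=-\partial_{\tilde x}p$ is $O(1)$ (unless $H$ happens to be totally geodesic in a very favorable way), so $\tilde\xi(\gamma_{\rho_0}(t))$ can move by $O(T)$ across the tube's length. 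Hence $\pi_{\tilde\xi}(\supp\chi)$ has diameter $O(R+T)$, not $O(R)$, and $|\supp q_0|$ is only bounded by $C_k(R+T)^{k-1}$. The lemma claims the bound with $R^{k-1}$ for all $0<T<T_0$, $0<R<R_0$, without any constraint $T\lesssim R$; indeed the paper explicitly notes that $T_0$ may be chosen independently of $R$, and in the regime $T\gg R$ your $q_0$ has support far larger than $C_k R^{k-1}$. This is precisely the obstruction the paper's proof is designed to handle: instead of a fixed $\tilde\xi$-cutoff, it uses the moving operators $Q_i=(hD_{\bar x_i}-\bar\xi_{0_i}(\bar x_1))^{k-1}$, whose symbols vanish to order $k-1$ on the geodesic $\gamma_{\rho_0}$; since the quantity $\bar\xi_i-\bar\xi_{0_i}(\bar x_1)$ is genuinely $O(R)$ throughout the tube regardless of $T$, one gets $\sup|\sigma(Q_i\oph(\tilde\chi))|\le 2R^{k-1}$, and the $R^{k-1}$ factor emerges via the Sobolev embedding estimate rather than Plancherel. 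Your approach could likely be repaired by allowing $q_0=q_0(\bar x_1,\tilde\xi)$ to be a bump of width $O(R)$ centered at $\tilde\xi_0(\bar x_1)$ — which is still compatible with your use of Lemma \ref{lem:13}, and at $\bar x_1=0$ it reduces to the fixed cutoff of volume $O(R^{k-1})$ you need — but as written the fixed $q_0(\tilde\xi)$ does not suffice.

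A smaller, non-fatal remark: after the Plancherel step you invoke Lemma \ref{lem:13} "with this $q$" to bound $\|\oph(\chi)\phi_h(\cdot,0,\cdot)\|_{L^2_{x',\tilde x}}$, whereas the lemma controls $\|\oph(q)\oph(\chi)\phi_h(\cdot,0,\cdot)\|$. Since your $q\equiv 1$ near $\supp\chi$ gives $\oph(q)\oph(\chi)=\oph(\chi)+O_{\Psi^{-\infty}}(h^\infty)$, the two agree up to $O(h^\infty)$; the paper instead simply applies the lemma with $q\equiv 1$ to get this term (equation (\ref{eqn:q1})), which is cleaner. Your treatment of the commutator term ($\chi\cdot H_p q\equiv 0$ forcing $\oph(\chi)[P,\oph(q)]=O(h^2)$, so the $h^{-1}$-weighted term is $O(h)$) is correct and mirrors the paper's use of the analogous vanishing of $H_p(\sigma(Q_i))$ to order $k-1$ on the geodesic.
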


To prove Lemma \ref{lem} we will strategically pick the $q$'s from Lemma \ref{lem:13} to be functions which vanish to order $k-1$ on the geodesic emanating from $\rho_0$. This will allow us to get a much better estimate on the geodesic tubes. 

\begin{proof} We choose local Fermi coordinates near $\rho_0\in\sa$ with respect to $H$, $(x',\bar{x})$ such that $H=\{\bar{x}=0\}$ and
\[
|\xi|^2_g=|\bar{\xi}|^2+f(x',\bar{x})|\xi'|^2.
\]
Thus note for $\rho_0$ we have $|\partial_{\bar{\xi}} p(\rho_0)|=2|\bar{\xi}|(\rho_0)>0 $ since $\bar{\xi}\in S^{k-1}_{\sqrt{1-|\xi'|^2_{g_{_H}}}}.$ We note the importance of the assumption that $A\Subset \{(x',\xi'):|\xi'|<1\}$ since otherwise we cannot assume $|\bar{\xi}|>0$ on $\sa$. Next, since $|\partial_{\bar{\xi}} p(\rho_0)|>0$ there exists a neighborhood $\mathcal{O}$ of $\rho_0$ such that $|\partial_{\bar{\xi}}p|>0$ on $\mathcal{O}$. Without loss of generality we assume that $\partial_{\xib_1}p(\rho_0)=|\partial_{\xib} p(\rho_0)|>0$ where $\bar{\xi}=(\bar{\xi}_1, \xib_2 \dots, \xib_{k})= (\bar{\xi}_1,\tilde{\xi})$. Furthermore, in these coordinates we have $\|u\|_{L^2_x}\leq 2 \|u\|_{L^2(M)}.$

\begin{figure}
\begin{center}
	\includegraphics[scale=0.25]{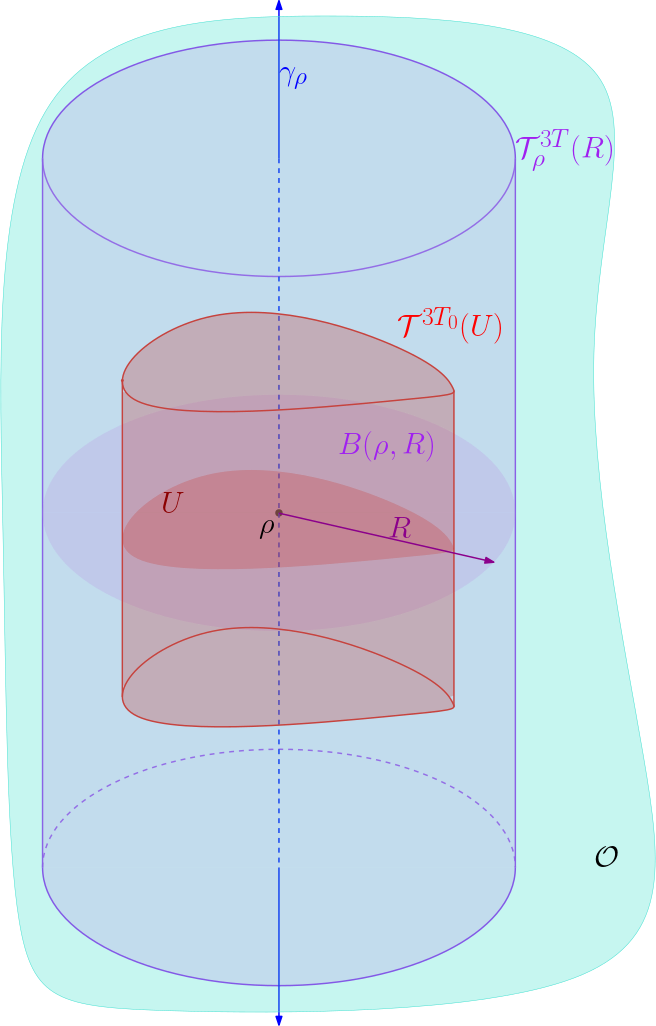}
	\end{center}
	\caption{}\label{fig:tubso}
\end{figure} 

Let $T_0,R_0>0$ be such that $\T_{\rho_0}^{3T_0}(R_0)\subset \mathcal{O}$, where the tube $\T$ is as defined in (\ref{eqn:tubes}). Note that for all $0<R<R_0$ we still have $\T^{3T_0}_{\rho_0}(R)\subseteq \mathcal{O}$. Therefore, the "flowout" time $T_0$ is independent of the tube width $R$, for $R<R_0$ small enough.   Let $\gamma_{\rho_0}(t)=(X(t),\Xi(t))$ denote a geodesic through $\rho_0$. From Hamilton's equations, we know the geodesic flow must satisfy
\[
\dot{\bar{x}}_1=p_{\bar{\xi}_1} \implies \frac{d}{dt} \bar{x}_1(t)=\partial_{\bar{\xi}_1}p(X(t),\Xi(t))=\partial_{\bar{\xi}_1} p(\gamma_{\rho_0}(t))>0 \, \text{  for  } |t|<3T_0 
\]
as $\partial_{\bar{\xi}_1} p(\rho)>0$ in $\mathcal{O}$. Thus for all $|t|\leq 3T_0$ we have   $\frac{d}{dt} \bar{x}_1(t)>0$. By the Inverse Function Theorem we can locally write $t=t(\bar{x}_1)$ and further we have
\[
\bar{x}_1 \big( \gamma_{\rho_o}(t(s)) \big)=s.
\]
We define
\begin{align*}
&X'(\xb_1):=x'\big(\gamma_{\rho_o}(t(\xb_1)) \big), \qquad \bar{X}_1(\xb_1):=\bar{x}_1\big( \gamma_{\rho_o}(t(\xb_1)) \big) =\xb_1     \\
&\tilde{X}(\xb_1):=\tilde{x} \big( \gamma_{\rho_o}(t(\xb_1)) \big), \qquad  \qquad \Xi(\xb_1):=\xi \big( \gamma_{\rho_o}(t(\xb_1)) \big).
\end{align*}
Therefore the geodesic through $\rho_0$ is parametrized by
\[
\xb_1 \mapsto (X'(\xb_1),\xb_1,\tilde{X}(\xb_1),\Xi(\xb_1))=(x' \big( \gamma_{\rho_o}(t(\xb_1)) \big), \xb_1, \tilde{x} \big( \gamma_{\rho_o}(t(\xb_1)) \big), \xi\big( \gamma_{\rho_o}(t(\xb_1)) \big) ).
\]
Moreover, we note that on the geodesic $\xi=\xi\big( \gamma_{\rho_0}(t(\xb_1))\big)=:\xi_0(\xb_1)$. This will be crucial in getting the improved estimate on the tubes. 

In what follows, we write $\tilde{x}$ to denote the normal coordinates to $H$ which are not $\bar{x}_1$, so $\bar{x}=(\xb_1,\xb_2,\dots \xb_k)=(\xb_1,\tilde{x})$. We first use a version of the Sobolev Embedding Theorem (see \cite[Lemma 6.1]{GalDM} or\cite[Corollary 8]{snap}):
\begin{align}
\|\oph(\chi) &\phi_h(x',\xb_1,\xt)\|_{L^\infty_{\xt}} \nonumber \\
& \leq C_k h^{\frac{1-k}{2}} \| \oph(\chi) \phi_h(x',\xb_1,\xt) \|^{1/2}_{L^2_{\xt}} \left(\sum_{i=2}^k \| (h D_{\xb_i}-\bar{\xi}_{0_i}(\xb_1))^{k-1} \oph(\chi) \phi_h(x',\xb_1,\xt)  \|^2_{L^2_{\xt}}  \right)^{1/4}. \nonumber
\end{align}
Squaring both sides, integrating with respect to $x'$ and applying H\"{o}lder's Inequality we have
\begin{align*}
\|\oph(\chi) &\phi_h(x',\xb_1,\xt) \|_{L^2_{x'}}^2  \leq \int \|\oph(\chi)\phi_h(x',\xb_1,\xt)\|^2_{L^\infty_{\xt}} dx' \\
&\leq C_k h^{1-k} \| \oph(\chi) \phi_h(x',\xb_1,\xt) \|_{L^2_{x',\xt}} \left( \sum_{i=2}^k \| (h D_{\xb_i}-\bar{\xi}_{0_i}(\xb_1))^{k-1} \oph(\chi) \phi_h(x',\xb_1,\xt) \|^2_{L^2_{x',\xt}} \right)^{1/2}.
\end{align*}
Setting $\xb_1=0$ and $\xt=0$ on the left we have
\begin{multline} \label{eqn:postse}
h^{k-1}\|\oph(\chi)\phi_h\|^2_{L^2(H)} \\
 \leq C_k \| \oph(\chi) \phi_h(x',0,\xt) \|_{L^2_{x',\xt}} \left( \sum_{i=2}^k \| (h D_{\xb_i}-\bar{\xi}_{0_i}(\xb_1))^{k-1} \oph(\chi) \phi_h(x',0,\xt) \|^2_{L^2_{x',\xt}} \right)^{1/2}.
\end{multline}
Next, we will use Lemma \ref{lem:13} to bound the $L^2$ norms on the right side of (\ref{eqn:postse}). We denote $T_{\rho_0}:=T \hprho = 2T\sqrt{1-|\xi'_0|^2_{g_{_H}(x'_0)}}$. For $q\in \cci(\R_{\xb_1}\times \R^{k-1}_{\xit})$ and $0<T<T_0$ we have 
\begin{align*} 
\| q(\xb_1,hD_{\xt}) \oph(\chi) & \phi_h(x',0,\xt) \|_{L^2_{x',\xt}}  \leq  C\left( \frac{1}{\sqrt{T_{\rho_0}}} + \sqrt{T} \right) \|\oph(\chi) q(\xb_1,hD_{\xt}) \phi_h\|_{L^2(M)}   \\
& \qquad   +  \frac{C \sqrt{T}}{h} \Big( \|P\phi_h\|_{L^2(M)} +  \| \oph(\chi) [P, q(\xb_1,hD_{\xt}) ]\phi_h\|_{L^2(M)} \Big)  +C_Th^{1/2}\|\phi_h\|_{L^2(M)}    
\end{align*}
where we have used that in our coordinates $\|u\|_{L^2_x}\leq 2 \|u\|_{L^2(M)}$. Next, since $P\phi_h=o_{L^2(M)}(h)$ we know $h^{-1}\|P\phi_h\|_{L^2(M)}\to 0$ as $h\to 0^+$. We also have that $C_T h^{1/2}\|\phi_h\|_{L^2(M)}\to 0$ as $h\to 0^+$ since $\|\phi\|_{L^2(M)}=1$. We regroup these two terms in a $o(1)$ error. Further, reordering the operators, we add an $O(h)$ error which we regroup with the $o(1)$ term to get
\begin{align}\label{eqn:q}
\| q(\xb_1,hD_{\xt}) \oph(\chi)  \phi_h(x',0,\xt) \|_{L^2_{x',\xt}} & \leq  C\left( \frac{1}{\sqrt{T_{\rho_0}}} + \sqrt{T} \right) \| q(\xb_1,hD_{\xt}) \oph(\chi) \phi_h\|_{L^2(M)}  \nonumber \\
& \qquad   +  \frac{C \sqrt{T}}{h}   \|  [P, q(\xb_1,hD_{\xt}) ] \oph(\chi) \phi_h\|_{L^2(M)}   + o(1)
\end{align}
First, taking $q=1$ in (\ref{eqn:q}) we get
\begin{equation}\label{eqn:q1}
\|  \oph(\chi) \phi_h(x',0,\xt) \|_{L^2_{x',\xt}} \leq C\left( \frac{1}{\sqrt{T_{\rho_0}}} + \sqrt{T} \right)  \|\oph(\chi) \phi_h \|_{L^2(M)} + o(1).
\end{equation}
Next, define $Q_i:=(h D_{\xb_i}-\bar{\xi}_{0_i}(\xb_1))^{k-1}=\oph(q_i)$ where $q_i=(\xib_i- \bar{\xi}_{0_i}(\xb_1))^{k-1}+O(h)$. Then, using this $q_i$ in (\ref{eqn:q}) we have
\begin{multline}\label{eqn:q2}
\| (h D_{\xb_i}-\bar{\xi}_{0_i}(\xb_1))^{k-1} \oph(\chi) \phi_h(x',0,\xt) \|_{L^2_{x',\xt}}  \\
\leq C\left( \frac{1}{\sqrt{T_{\rho_0}}} + \sqrt{T} \right) \| Q_i  \oph(\chi) \phi_h\|_{L^2(M)}  +  \frac{C \sqrt{T}}{h} \| [P, Q_i ]  \oph(\chi) \phi_h\|_{L^2(M)}  + o(1).
\end{multline}
Next define $\tilde{\chi}\in\cci(T^*M)$ such that $\supp\tilde{\chi}\subseteq \T^{3T}(U)$ and $\tilde{\chi}\equiv 1$ on $\supp\chi$.
We rewrite
\[
 Q_i  \oph(\chi)\phi_h=  Q_i   \oph(\tilde\chi)  \oph(\chi)  \phi_h + O_{}(h^\infty).
\]
Recall that on the geodesic $\gamma_{\rho_0}$ we have $\xi= \xi_{0}(\xb_1)$. Therefore, the principal symbol of $Q_i$, $\sigma(Q_i),$ vanishes to order $k-1$ on the geodesic. Furthermore, since $\tilde{\chi}$ is supported in the tube $\T^{3T}(U)$ where $U\subseteq B_\mathscr{L}(\rho_0,R)$, the distance between any point in $\supp\tilde\chi$ and the geodesic is approximately at most $R$. Thus we have
\[
\sup|\sigma(Q_i \oph(\tilde{\chi})  )|\leq 2 R^{k-1}.
\]
This implies that $\|Q_i \oph(\tilde\chi)  \|_{L^2\to L^2(M)}\leq 2 R^{k-1}+O(h)$ and in particular that
\begin{align*}
 \| Q_i \oph(\chi)   \phi_h \|_{L^2(M)} &=\| Q_i \oph(\tilde\chi)\oph(\chi)\phi_h \|_{L^2(M)} +O(h^\infty)\|\phi_h\|_{L^2(M)} \\
 &\leq (2 R^{k-1}+O(h))\|\oph(\chi)\phi_h\|_{L^2(M)}+O(h^\infty)\|\phi_h\|_{L^2(M)}.
\end{align*}
We also have that $H_p(\sigma(Q_i))=H_p( (\xib_i- \bar{\xi}_{0_i}(\xb_1))^{k-1} )=(k-1)( \xib_i- \bar{\xi}_{0_i}(\xb_1) )^{k-2}H_p( \xib_i- \bar{\xi}_{0_i}(\xb_1) )$ vanishes to order $k-1$ on the geodesic $\gamma_{\rho_0}$.  Since $\sigma([P,Q_i])=\frac{h}{i} H_p((\xib_i- \bar{\xi}_{0_i}(\xb_1))^{k-1}) $, we similarly have
\[
 \|[P,Q_i] \oph(\chi) \phi_h \|_{L^2(M)}\leq h (C_p R^{k-1}+O(h))\|\oph(\chi)\phi_h\|_{L^2(M)} +O(h^\infty)\|\phi_h\|_{L^2(M)}
\]
where $C_p$ is a constant which depends on $p$.
Using these estimates in (\ref{eqn:q2}) we have
\begin{multline}\label{eqn:qrest}
\| (h D_{\xb_i}-\bar{\xi}_{0_i}(\xb_1))^{k-1} \oph(\chi) \phi_h(x',0,\xt) \|_{L^2_{x',\xt}}  \\
\leq C\left( \frac{1}{\sqrt{T_{\rho_0}}} +\sqrt{T}\right) (2 R^{k-1}+O(h))\|\oph(\chi)\phi_h\|_{L^2(M)}  + C \sqrt{T} (C_p R^{k-1}+O(h))\|\oph(\chi)\phi_h\|_{L^2(M)}  +o(1).
\end{multline}
Finally, using (\ref{eqn:q1}) and (\ref{eqn:qrest}) in (\ref{eqn:postse}) and taking $h$ to zero, we have
\[
\limsup_{h\to 0^+} h^{k-1}  \|\oph(\chi)\phi_h\|^2_{L^2(H)} 
\leq \limsup_{h\to 0^+} C_k R^{k-1}  \left( \frac{1}{\sqrt{T_{\rho_0}}} + \sqrt{T} \right) \left( \frac{2}{\sqrt{T_{\rho_0}}}+C_p \sqrt{T}  \right) \|\oph(\chi) \phi_h \|^2_{L^2(M)}. 
\]
Using the defect measure $\mu$ associated to $\{\phi_h\}$ and that $T\ll 1$ we obtain the desired bound:
\[
\limsup_{h\to 0^+} h^{k-1}  \|\oph(\chi)\phi_h\|^2_{L^2(H)} \leq C_k \frac{R^{k-1}}{T_{\rho_0}}  \int_{T^*M} |\chi|^2 d\mu.
\]
\end{proof}

\subsection{Key Quantitative Estimate: Proof of Proposition \ref{prop}}
The main estimate used in the proof of Theorem \ref{thrm:6} lets us control terms of the form $|\langle \oph(\chi) \phi_h,\psi_h \rangle_{L^2(H)}|$. To prove it we first cover $\supp \chi|_{_\sa}$ with tubes and apply Lemma \ref{lem}. After localizing to the tubes, we will need to estimate $\langle \oph (\chi_j \chi) \phi_h, \psi_h \rangle_{L^2(H)}$ where $\chi_j$ is a cutoff localizing to a tube as in Lemma \ref{lem}. If we use Cauchy-Schwarz to bound this by the $L^2$ norms,
\[
| \langle \oph (\chi_j \chi) \phi_h, \psi_h \rangle_{L^2(H)} | \leq \| \oph(\chi_j \chi) \phi_h\|_{L^2(H)} \|\psi_h\|_{L^2(H)} = \| \oph(\chi_j \chi) \phi_h\|_{L^2(H)} 
\]
we lose all the information from $\psi_h$ since they are $L^2$-normalized on $H$. Thus we need to maintain the localization information of $\psi_h$ too. To do this, we will use $\chi_j \chi\in \cci (T^*M)$ and Lemma \ref{lem:oh} to find a new cutoff $\theta_j \in \cci(T^*H)$ such that
\[
\gh \oph (\chi_j \chi) \phi_h = \oph(\theta_j) \gh \oph (\chi_j \chi) \phi_h + O(h^\infty)
\]
and thus
\begin{align*}
 \langle \oph (\chi_j \chi) \phi_h, \psi_h \rangle_{L^2(H)} & =\langle \oph (\chi_j \chi) \phi_h,   \oph(\theta_j)^* \psi_h \rangle_{L^2(H)} + O(h^\infty) \\
 &\leq \| \oph (\chi_j \chi) \phi_h \|_{L^2(H)} \|  \oph(\theta_j)^* \psi_h\|_{L^2(H)}.
 \end{align*}
Then we will be able to apply Lemma \ref{lem} to the first term and use the defect measure for $\nu$ in the second.


\begin{proof}[Proof of Proposition \ref{prop}]
Let $\chi\in\cci(T^*M)$ with $H_p \chi\equiv 0$ on $\Lambda^{2T}(\ep)$. 
Consider sets of the form:
\[
U_j:=\{(x',0,\xi',\xib)\in\sa :  (x',\xi')\in B_{A}(\rho_j',R_j), (x',0,\xi',\xib)\in B_{\sa} (\sigma_j(x',\xi'),R_j) \}
\]
where $R_j>0$, $\rho_j'=(x_j',\xi_j')\in A$ and $\sigma_j$ is a smooth section, that is $\sigma_j:A\to\sa$ and $\pi (\sigma_j(\rho'))=\rho'$. These are essentially "rectangles" in $\sa$ constructed by crossing a ball in $A$ with balls in the spheres $S^{k-1}$.  We note that the $\nu^A$ measure of these rectangles satisfy
\[
\nua(U_j) 
\geq C_{n,k} \int_{(x',\xi')\in A} \frac{\mathds{1}_{B_A(\rho_j',R_j)}}{\big(1-|\xi'|_{g_{_H}(x')}^2  \big)^{\frac{k-1}{2}}  } R_j^{k-1} d\nu(x',\xi')
\]
provided $R_j$ is small compared to $\sqrt{1-|\xi'|_{g_{_H}(x')}}$.
Furthermore by uniform continuity of $\log(1-|\xi'|^2_{g_{_H}(x')})$ on $\{|\xi'|^2_{g_{_H}(x')}<c<1\}$, there exists an $R>0$ independent of $\rho_j'$ such that if $(x',\xi')\in B_A(\rho_j',R)$ then
\[
\frac{1-k}{2} \log(1-|\xi_j'|_{g_{_H}(x_j')}^2)-\log(2) \leq \frac{1-k}{2} \log(1-|\xi'|_{g_{_H}(x')}^2) \leq \frac{1-k}{2} \log(1-|\xi_j'|_{g_{_H}(x_j')}^2)+\log(2).
\]
Thus for $R_j<R$, we have
\[
\nua(U_j) \geq  \frac{C_{n,k} \, \nu(B_A(\rho_j',R_j)) R_j^{k-1}}{ 2 (1-|\xi_j'|_{g_{_H}(x_j')}^2)^{\frac{k-1}{2} } } .
\]
Fix $\delta>0$. By outer regularity of $\nua$ there exist $\{U_j\}_{j=1}^{N(\delta)}$ covering $\supp\chi|_{_\sa}$ such that
\begin{equation}\label{eqn:cover}
\nua(\supp\chi|_{_\sa}) + \delta \geq  \sum_{j=1}^{N(\delta)} \nua(U_j)\geq C_{n,k} \sum_{j=1}^{N(\delta)}  \frac{  \nu(B_A(\rho_j',R_j))R_j^{k-1} }{ 2 (1-|\xi_j'|_{g_{_H}(x_j')}^2)^{\frac{k-1}{2} } }.
\end{equation}
To construct the cover of tubes, we first "thicken" the $U_j$'s into $U_j(\ep) \subseteq \mathscr{L}$ as defined in (\ref{eqn:thicc}). Finally, we flow $U^{\ep}_j$'s to form the collection of tubes $\{ \T^{3T}(U_j(\ep))\}$ where 
\begin{equation}\label{eqn:TR}
T\leq \min_j\{T_{0_j} \}=:T_0  \qquad \text{and} \qquad  3R_j\leq \min_j(R_{0_j})=:R_0  
\end{equation}
where the $T_{0_j}$'s are the "$T_0$'s" in the proof of lemma \ref{lem} and the $R_{0_j}$'s are  the "$R_0$'s" in the proof of lemma \ref{lem}. We note that $U_j(\ep)\subseteq B_\mathscr{L}(\rho_j,3R_j)$ where $\rho_j=\sigma_j(\rho'_j)$.
 By lemma \ref{lem:flow} (or  \cite[Lemma 3.5]{snap}), for each $j$, we can take $\chi_j\in\cci(T^*M;[0,1])$ supported in $\T^{3T}(U_j(\ep))$ such that $\supp H_p\chi_j \subseteq \T^{3T}(U_j(\ep)) \setminus \T^{2T}(U_j(\ep))$ and furthermore that 
\[
\sum_{j=1}^{N(\delta)} \chi_j\equiv 1 \text{ on } \bigcup_{|t|\leq 2T} \varphi_t ((\supp\chi|_{_\sa})(\ep/2)).
\]
Next we split the inner product into pieces localized to these tubes. We have
\begin{align*}
h^\frac{k-1}{2} \left| \left\langle \oph(\chi)\phi_h,\psi_h \right\rangle_{L^2(H)} \right| &\leq h^\frac{k-1}{2} \Big| \Big\langle \oph\Big(\sum_{j=1}^{N(\delta)} \chi_j \chi \Big)\phi_h,\psi_h \Big\rangle_{L^2(H)} \Big| +  h^\frac{k-1}{2}\Big| \Big\langle \oph\Big(\Big(1- \sum_{j=1}^{N(\delta)} \chi_j\Big) \chi \Big)\phi_h,\psi_h \Big\rangle_{L^2(H)} \Big| \\
&=:I+I\!I.
\end{align*}
We claim $I\!I=o(1)$ as $h\to 0^+$. We leave the proof of this to Lemma \ref{lem:ii} at the end of this section. The rest of this proof is dedicated to controlling $I$. By Lemma \ref{lem:oh} there exits $\theta_j\in\cci(T^*H)$ such that
\begin{equation} \label{eqn:thetaj}
\gh \oph(\chi_j \chi) \phi_h=\oph(\theta_j) \Gamma_H \oph(\chi_j\chi)\phi_h+O(h^\infty).
\end{equation}
Particularly, we need to take $\theta_j$ equal to  $1$ on $B_{T^*H}(\rho'_j,R_j+\ep)$ and $\supp\theta_j\subseteq B_{T^*H}(\rho'_j,R_j+2\ep)$.
Thus we have
\begin{multline}
I\leq h^\frac{k-1}{2} \sum_{j=1}^{N(\delta)} \left| \left\langle \oph\left( \chi_j \chi \right)\phi_h,\psi_h \right\rangle_{L^2(H)} \right| \leq  h^\frac{k-1}{2}\sum_{j=1}^{N(\delta)} \left| \left\langle \oph(\theta_j) \gh \oph\left( \chi_j \chi \right)\phi_h,\psi_h \right\rangle_{L^2(H)} \right| +O(h^\infty) \\
\leq h^\frac{k-1}{2} \sum_{j=1}^{N(\delta)} \| \oph\left( \chi_j \chi \right)\phi_h \|_{L^2(H)} \| \oph(\theta_j)^*  \psi_h\|_{L^2(H)} + O(h^\infty).
\end{multline}
We are now in position to apply Lemma \ref{lem} for "$\chi$"$=\chi_j \chi$.
\begin{align}
\limsup_{h\to 0^+} h^{\frac{k-1}{2}} & \left| \left\langle \oph(\chi)\phi_h,\psi_h \right\rangle_{L^2(H)} \right| = \limsup_{h\to 0^+} h^{\frac{k-1}{2}} (I+I\!I) \nonumber \\
&\leq \limsup_{h\to 0^+} h^{\frac{k-1}{2}}  \sum_{j=1}^{N(\delta)} \| \oph\left( \chi_j \chi \right)\phi_h \|_{L^2(H)} \| \oph(\theta_j)^*  \psi_h\|_{L^2(H)} \nonumber \\
&\leq   \sum_{j=1}^{N(\delta)} \left( \frac{C_k R_j^{k-1}}{ 2 T \sqrt{1-|\xi'_j|^2_{g_{_H}(x'_j)}}} \int_{T^*M} |\chi_j \chi|^2 d\mu \right)^{1/2} \left( \int_{T^*H} |\theta_j|^2 d \nu  \right)^{1/2} \label{eqn:3}
\end{align}
where we used that $\nu$ is a defect measure associated to $\{\psi_h\}$. Next, to get $\nua(\supp\chi|_{_\sa})$ to appear, we work to make the second term in (\ref{eqn:3}) look like the right side of (\ref{eqn:cover}). Moving the $R_j^{k-1}$ over, multiplying and dividing by $2 (1-|\xi_j'|^2_{g_{_H}(x_j')})^{\frac{k-1}{2}}$ and applying Cauchy-Schwarz we find that (\ref{eqn:3}) is bounded by
\[C_k \left(\frac{1}{T} \int_{T^*M}   \sum_{j=1}^{N(\delta)} \frac{  2 (1-|\xi_j'|^2_{g_{_H}(x_j')})^{\frac{k-1}{2}}  |\chi_j\chi|^2}{2 \sqrt{1-|\xi'_j|^2_{g_{_H}(x'_j)}}} d\mu   \right)^{1/2} \left(  \sum_{j=1}^{N(\delta)} \frac{R_j^{k-1}}{2 (1-|\xi_j'|^2_{g_{_H}(x_j')})^{\frac{k-1}{2}}} \int_{T^*H} \mathds{1}_{B_{T^*H}(\rho'_j,R_j+2\ep)} d \nu   \right)^{1/2}. \\
\]
Next, since the $\chi_j$'s are supported in the tubes, the first integral can be rewritten as an integral over ${\Lambda^{3T}}(\ep)$. Further, since the left side does not depend on $\ep$ we can take the limit as $\ep \to 0$ on the right side. Using the dominated convergence theorem to bring the limit inside we have
\begin{align*}
\limsup_{h\to 0^+} h^{\frac{k-1}{2}} & \left| \left\langle \oph(\chi)\phi_h,\psi_h \right\rangle_{L^2(H)} \right| \\
& \leq C_k \left(\frac{1}{T} \int_{\Lambda^{3T}}  |\chi |^2  \sum_{j=1}^{N(\delta)}   (1-|\xi_j'|^2_{g_{_H}(x_j')})^{\frac{k-2}{2}}  \left|\chi_j \right|^2 d\mu   \right)^{1/2} \left(  \sum_{j=1}^{N(\delta)} \frac{R_j^{k-1}  \nu\big( B_{A}(\rho'_j,R_j) \big)}{2 (1-|\xi_j'|^2_{g_{_H}(x_j')})^{\frac{k-1}{2}}}    \right)^{1/2} \\
\end{align*}
where $\Lambda^{3T}$ denotes $\T^{3T}(\sa)$. Next, since the second term is what we had in (\ref{eqn:cover}), we can replace it with $\left(  \nua(\supp\chi|_{_\sa})   +\delta \right)^{1/2}$. Noticing that the left side does not depend on $T$, we take the limit as $T\to 0$ and use the definition of $\mua$ from (\ref{eqn:mua}) to get
\begin{align*}
\limsup_{h\to 0^+} h^{\frac{k-1}{2}} & \left| \left\langle \oph(\chi)\phi_h,\psi_h \right\rangle_{L^2(H)} \right| \\
&\leq  C_{n,k} \left( \int_{\sa} |\chi|^2  \sum_{j=1} ^{N(\delta)} (1-|\xi_j'|^2_{g_{_H}(x_j')})^{\frac{k-2}{2}} \left|\chi_j \right|^2 d\mua \right)^{1/2}  \left(  \nua(\supp\chi|_{_\sa})   +\delta \right)^{1/2} .
\end{align*}
Finally, since $\{\chi_j|_{_\sa}\}$ formed a partition of unity for $\supp \chi|_{_\sa}$, $|\chi_j|\leq1$, $(1-|\xi'|^2_{g_{_H}(x')})^\frac{k-1}{2}  $ is continuous, and since $\delta>0$ was arbitrary, we have
\[
\limsup_{h\to 0^+} h^{\frac{k-1}{2}} \left| \left\langle \oph(\chi)\phi_h,\psi_h \right\rangle_{L^2(H)} \right| \leq C_{n,k} \left( \int_{\sa} |\chi|^2 (1-|\xi'|^2_{g_{_H}(x')})^\frac{k-2}{2}  d\mua \right)^{1/2}  \left(  \nua(\supp\chi|_{_\sa})  \right)^{1/2} 
\]
as desired.
\end{proof}

Finally, we show that term $I\! I=h^\frac{k-1}{2} \Big|  \Big\langle \oph \Big( \Big( 1-\sum_j \chi_j \Big) \chi \Big) \phi_h,\psi_h  \Big\rangle_{L^2(H)} \Big|$ in the proof of proposition \ref{prop} is $o(1)$ as $h\to 0^+$ as claimed above. 
\begin{lemma}\label{lem:ii} For $\chi,\chi_j$ defined in the proof of proposition \ref{prop} we have
\[
h^\frac{k-1}{2} \Big|  \Big\langle \oph \Big( \Big( 1-\sum_j \chi_j \Big) \chi \Big) \phi_h,\psi_h  \Big\rangle_{L^2(H)} \Big|\to 0, \, \text{ as } \, h\to 0^+
\]
\end{lemma}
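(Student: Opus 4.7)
The plan is to exploit the fact that $\tilde\chi := (1 - \sum_j \chi_j)\chi$ has support disjoint from $\sa$, and then leverage the wavefront constraints ($\phi_h$ microlocalized near $S^*M$, $\wfs(\psi_h)=A$) to show the inner product is negligible. By construction, $\sum_j \chi_j \equiv 1$ on the open set $\bigcup_{|t|\leq 2T}\varphi_t((\supp\chi|_{\sa})(\ep/2))$, which is an open neighborhood of $\supp\chi \cap \sa$; hence $\tilde\chi$ vanishes on that open set and $\supp\tilde\chi \cap \sa = \emptyset$. Since $\sa = S^*_H M \cap \pi_{T^*H}^{-1}(A)$, the compact set $\supp\tilde\chi \cap S^*_H M$ projects onto a compact subset of $T^*H$ disjoint from $A$. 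By compactness and continuity, one can then choose open neighborhoods $V \supseteq S^*M$ in $T^*M$ and $W \supseteq A$ in $T^*H$ satisfying
\[
\pi_{T^*H}\bigl(\supp\tilde\chi \cap V \cap T^*_H M\bigr) \cap W = \emptyset,
\]
and fix cutoffs $\eta \in \cci(T^*M;[0,1])$ with $\eta \equiv 1$ near $S^*M$, $\supp\eta \subseteq V$, and $\theta \in \cci(T^*H;[0,1])$ with $\theta \equiv 1$ near $A$, $\supp\theta \subseteq W$.

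Next I would split $\oph(\tilde\chi)\phi_h = \oph(\tilde\chi\eta)\phi_h + \oph(\tilde\chi(1-\eta))\phi_h + O_{L^2}(h^\infty)$ and handle the two pieces separately. For the off-$S^*M$ piece the argument mirrors the proof of Lemma \ref{lem:ohinf}: since $\supp(\tilde\chi(1-\eta)) \subseteq \ellp(P(h))$, an elliptic parametrix gives $\oph(\tilde\chi(1-\eta)) = R(h)P(h) + O(h^\infty)_{\Psi^{-\infty}}$, and combining this with the quasimode bound $P\phi_h = o_{L^2(M)}(h)$, a commutator estimate, and the standard restriction bound \eqref{eqn:stdrb} (which loses a factor $h^{-k/2}$) yields
\[
h^{\frac{k-1}{2}}\bigl|\langle \gh \oph(\tilde\chi(1-\eta))\phi_h,\ \psi_h\rangle_{L^2(H)}\bigr| = o(h^{1/2}) = o(1).
\]

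For the remaining piece I would further split $\psi_h = \oph(\theta)\psi_h + (I - \oph(\theta))\psi_h$. Since $\theta \equiv 1$ on a neighborhood of $\wfs(\psi_h) = A$, we have $(I-\oph(\theta))\psi_h = O_{L^2(H)}(h^\infty)$; paired against $\gh\oph(\tilde\chi\eta)\phi_h$, which is $O_{L^2(H)}(h^{-k/2})$ by \eqref{eqn:stdrb}, this yields an $O(h^\infty)$ contribution. For the $\oph(\theta)\psi_h$ piece, pass the cutoff to the left slot: $\langle \gh\oph(\tilde\chi\eta)\phi_h, \oph(\theta)\psi_h\rangle_{L^2(H)} = \langle \oph(\theta)^* \gh\oph(\tilde\chi\eta)\phi_h, \psi_h\rangle_{L^2(H)}$. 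The choice of $V$ and $W$ in the first paragraph ensures $\{\rho \in T^*_H M : \rho \in \supp(\tilde\chi\eta),\ \pi_{T^*H}\rho \in \supp\theta\} = \emptyset$, so Lemma \ref{lem:oh} (adapted to $\oph(\theta)^*$, whose symbol differs from $\theta$ only by lower-order terms still supported in $\supp\theta$) gives $\oph(\theta)^*\gh\oph(\tilde\chi\eta)\phi_h = O_{L^2(H)}(h^\infty)$, and this contribution is also $O(h^\infty)$. Adding everything, $h^{(k-1)/2}|\langle \oph(\tilde\chi)\phi_h, \psi_h\rangle_{L^2(H)}| \to 0$ as $h \to 0^+$. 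The main technical point is the coupled choice of the neighborhoods $V$ of $S^*M$ and $W$ of $A$ at the outset, which relies crucially on the compactness of $\supp\tilde\chi$ and the strict separation from $\sa$ established in the first paragraph; once that setup is in place, everything reduces to standard elliptic/wavefront manipulations already used in Lemma \ref{lem:ohinf}.
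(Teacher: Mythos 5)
Your proposal is correct and follows essentially the same route as the paper's own proof: reduce to the piece of the inner product localized near $S^*M$ on the left and near $A$ on the right (the paper cites Lemma~\ref{lem:ohinf} for this, you re-derive the same reduction with explicit cutoffs $\eta$ and $\theta$), then invoke Lemma~\ref{lem:oh} to kill the remaining term, the key point in both being the separation $\supp\tilde\chi\cap\sa=\emptyset$ forced by $\sum_j\chi_j\equiv 1$ on a flowed-out neighborhood of $\supp\chi|_{\sa}$. The only stylistic difference is that you set up the neighborhoods $V$ of $S^*M$ and $W$ of $A$ via a clean compactness argument up front, whereas the paper verifies the hypothesis of Lemma~\ref{lem:oh} by a direct contradiction involving the thickened set $\sa(\alpha)$ — substantively equivalent.
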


\begin{proof}
First, using Lemma \ref{lem:ohinf} we obtain
\begin{align*}
h^\frac{k-1}{2} & \Big|  \Big\langle \oph \Big( \Big( 1-\sum_j \chi_j \Big) \chi \Big) \phi_h,\psi_h  \Big\rangle_{L^2(H)} \Big|   \\
& \leq  h^\frac{k-1}{2} \Big| \Big\langle \oph\Big( \Big(1-\sum_{j=1}^{N(\delta)} \chi_j   \Big)  \chi \Big) \oph(\chi_{_{S^*M}})\phi_h, \oph(\chi_{_A}) \psi_h  \Big\rangle  \Big| +  o(1) \\
 & \leq h^\frac{k-1}{2} \left\| \oph(\chi_{_A})^* \gh  \oph\Big( \Big(1-\sum_{j=1}^{N(\delta)} \chi_j   \Big)  \chi \Big) \oph(\chi_{_{S^*M}})\phi_h  \right\|_{L^2(H)} \|\psi_h\|_{L^2(H)} + o(1)
\end{align*}
where $\chi_{_{S^*M}}$ and $\chi_{_A}$ are defined in the statement of Lemma \ref{lem:ohinf}. We show that
\begin{equation}\label{eqn:gross}
\oph(\chi_{_A})^* \gh  \oph\Big( \Big(1-\sum_{j=1}^{N(\delta)} \chi_j   \Big)  \chi \Big) \oph(\chi_{_{S^*M}})\phi_h =O(h^\infty).
\end{equation}
To do so, we employ Lemma \ref{lem:oh}. We just need to verify the hypothesis of the lemma. For contradiction, suppose there is a point $(z_0',\xi_0')\in\supp \chi_{_A}$ and also $(z_0',0,\xi_0',\xib_0)\in \supp \Bigl( \Big(1-\sum_j \chi_j \Big) \chi \chi_{_{S^*M}} \Bigr)$. First, we note that $(z_0,0,\xi_0',\xib_0)\not\in (\supp\chi\big|_{_\sa})(\ep/2)$. However, since also $(z'_0,0,\xi_0',\xib_0)\in \supp \chi_{_\ssm}$ and $(z_0',\xi_0')\in\supp\chi_{_A}$ we know that $(z'_0,0,\xi'_0,\xib_0)\in\sa({\alpha})=\bigcup_{\rho\in\sa} B_\mathscr{L}(\rho,\alpha)$ where $\alpha>0$ is small and depends on how tightly $\chi_{_\ssm}$ and $\chi_{_A}$ are localized. Furthermore $(z'_0,0,\xi'_0,\xib_0)\in\supp\chi$ and so we have
\begin{equation}\label{eqn:silly}
(z'_0,0,\xi_0',\xib_0)\not\in (\supp\chi\big|_{_\sa})(\ep/2) \qquad \text{and} \qquad (z'_0,0,\xi'_0,\xib_0)\in \supp\chi\big|_{_{{\sa}(\alpha)}}
\end{equation}
but by taking $\chi_{_A}$ and $\chi_{_\ssm}$ supported sufficiently close to $A$ and $\ssm$, we can find $\alpha$ such that $\supp\chi\big|_{_{\sa(\alpha)}}\subseteq  (\supp\chi\big|_{_\sa})(\ep/2) $ which contradicts (\ref{eqn:silly}). Thus use of Lemma \ref{lem:oh} is justified and we have (\ref{eqn:gross}).

\end{proof}


\section{Recurrence: Proof of Theorem \ref{thrm:rec}}\label{sec:rec}
In this section we prove Theorem \ref{thrm:rec} which gives the behavior of $| \langle \phi_h, \psi_h \rangle_{L^2(H)}|$ as $h\to 0^+$ when the recurrent set of $\sa$ is $\nua$-measure zero.
First, we define the recurrent set and introduce some notation. Although the following can be defined more generally, we stick to defining loop set, recurrent set, etc., for $\sa$ only. First for each point $\rho\in\sa$ we define the first return time $T_A:\sa\to \R \cup\{ \infty\}$ by 
\[
T_A(\rho)=\inf\{t>0: \gamma_\rho(t) \in \sa\}
\]
where $\gamma_\rho(t)$ is the geodesic emanating from $\rho$. This gives us the first time in which the geodesic $\gamma_\rho(t)$ returns to $\sa$. If the geodesic never returns to $\sa$, the return time will be infinite. We will primarily be interested in the points which return to $\sa$ in finite time. We call the collection of such points the loop set, denoted
\[
\mathcal{L}_A=\{\rho\in\sa:T_A(\rho)<\infty\}.
\]
Since points in the loop set return to $\sa$ in finite time, we denote the point in which $\rho\in\mathcal{L}_A$ returns to by $\eta(\rho)$ defined by $\eta:\mathcal{L}_A\to \sa$,
\[\eta(\rho)=\gamma_\rho(T_A(\rho)).\]
Next, define the infinite loop sets
\[
\mathcal{L}_A^{+\infty}=\bigcap_{k\geq 0} \eta^{-k}(\mathcal{L}_A) \qquad \text{and} \qquad  \mathcal{L}_A^{-\infty}=\bigcap_{k\geq 0} \eta^{k}(\mathcal{L}_A) 
\]
which are essentially the loop set points that return to $\sa$ infinitely often forward and backward in time, respectively. Finally, the recurrent set $\mathcal{R}_A:=\mathcal{R}_A^+ \cap \mathcal{R}_A^-$ where
\[
\mathcal{R}_A^{\pm}:= \left\{  \rho\in \mathcal{L}_A^{\pm \infty} : \rho\in\bigcap_{N>0} \overline{ \bigcup_{k\geq N} \eta^{\pm k}(\rho) } \right\},
\]
which is essentially the collection of points $\rho\in\sa$ which return infinitely often and eventually get arbitrarily close to $\rho$.

\begin{proof}[Proof of Theorem \ref{thrm:rec}]
Suppose for contradiction that there is a sequence $h_j\to 0$ such that
\begin{equation}\label{eqn:cont}
|\langle \phi_{h_j}, \psi_{h_j} \rangle | \geq C h_j^{\frac{1-k}{2}}.
\end{equation}
Taking a subsequence if necessary, there exists defect measure $\mu$ for $\{\phi_{h_j}\}$. Further note that $\nu$ is still a defect measure for $\{\psi_{h_j}\}$. Defining $\mua$ as in (\ref{eqn:mua}) we decompose $\mua=f \nua+\lambda^A$. Then applying Theorem \ref{thrm:6} we have
\begin{align*}
\lim_{j\to\infty} h_j^{\frac{k-1}{2}} | \langle \phi_{h_j}, \psi_{h_j} \rangle | &\leq C_{n,k} \left( \int_{\sa} f (1-|\xi'|^2)^{\frac{k-2}{2}} d \nua \right)^{1/2} \\
& = C_{n,k} \left( \int_{\sa \cap \mathcal{R}_A} f (1-|\xi'|^2)^{\frac{k-2}{2}} d \nua   + \int_{\sa\setminus \mathcal{R}_A} f (1-|\xi'|^2)^{\frac{k-2}{2}} d \nua  \right)^{1/2}\\
&=  C_{n,k} \left(  \int_{\sa\setminus \mathcal{R}_A} f (1-|\xi'|^2)^{\frac{k-2}{2}} d \nua  \right)^{1/2} 
\end{align*}
where the last line follows from the fact that $\nua(\mathcal{R}_A)=0$. Next, since $\nua$ and $\lambda^A$ are mutually singular there exists $V$ and $W$ such that $\nua(V)=\lambda^A(W)=0$ and $\sa=V \sqcup W$. Therefore we have
\begin{align*}
\lim_{j\to\infty} h_j^{\frac{k-1}{2}} | \langle \phi_{h_j}, \psi_{h_j} \rangle | & \leq  C_{n,k} \left(  \int_{\sa\cap \mathcal{R}_A^c} f (1-|\xi'|^2)^{\frac{k-2}{2}} d \nua  \right)^{1/2}  \\
&= C_{n,k} \left(  \int_{(\sa\setminus \mathcal{R}_A) \cap V} f (1-|\xi'|^2)^{\frac{k-2}{2}} d \nua +  \int_{(\sa\setminus \mathcal{R}_A) \cap W} f (1-|\xi'|^2)^{\frac{k-2}{2}} d \nua  \right)^{1/2} \\
& = C_{n,k} \left(   \int_{(\sa\setminus \mathcal{R}_A) \cap W}  (1-|\xi'|^2)^{\frac{k-2}{2}} d \mua  \right)^{1/2} \leq C \mua(\sa\setminus \mathcal{R}_A)^{1/2}
\end{align*}
since $\nua(V)=0$ and since $\laa(W)=0$ on $W$, so we can rewrite $\mua=f\nua$ on $W$. Next, we use that Lemma \ref{lem:Ra} below gives $\mua(\sa\setminus \mathcal{R}_A)=0$. Thus
\[
\lim_{j\to\infty} h_j^{\frac{k-1}{2}} | \langle \phi_{h_j}, \psi_{h_j} \rangle | =0
\]
which contradicts (\ref{eqn:cont}).
\end{proof}

Finally, we show that $\sa\setminus\mathcal{R}_A$ is $\mua$-measure zero, which will complete the proof of Theorem \ref{thrm:rec}.

\begin{lemma}\label{lem:Ra}
Let $H\subseteq M$ and suppose that $\{\phi_h\}$ is a sequence of eigenfunctions with defect measure $\mu$. Then
\[
\mua(\mathcal{R}_A)=\mua(\sa).
\]
\end{lemma}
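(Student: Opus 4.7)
The plan is to prove $\mu^A(\Sigma^A \setminus \mathcal{R}_A)=0$ by applying the classical Poincar\'e recurrence theorem to the first-return map $\eta$ on $\Sigma^A$ equipped with the measure $\mu^A$. The defect measure $\mu$ of the Laplace eigenfunctions $\{\phi_h\}$ is a finite, flow-invariant Radon measure supported on $S^*M$, and from the definition (\ref{eqn:mua}), the transverse measure $\mu^A$ inherits invariance under the geodesic flow in the appropriate sense.

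First I would show that $\mu^A$-almost every point of $\Sigma^A$ lies in $\mathcal{L}_A^{+\infty}\cap\mathcal{L}_A^{-\infty}$. Given a Borel set $\Omega\subseteq\Sigma^A$ on which $T_A=+\infty$, take $T>0$ small enough that $\mathcal{T}^T(\Omega)$ is a genuine flow box. Since no trajectory starting in $\Omega$ ever re-crosses the transversal, the translates $\varphi_{2kT}(\mathcal{T}^T(\Omega))$, $k\in\mathbb{N}$, are pairwise disjoint. Flow invariance of $\mu$ gives each the same $\mu$-measure, and $\mu(T^*M)<\infty$ forces $\mu(\mathcal{T}^T(\Omega))=0$ for every $T>0$, hence $\mu^A(\Omega)=0$. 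Applying the same argument to $\eta^{-k}(\Sigma^A\setminus\mathcal{L}_A)$ for each $k\geq 0$ gives $\mu^A(\Sigma^A\setminus\mathcal{L}_A^{+\infty})=0$, and the time-reversed version handles $\mathcal{L}_A^{-\infty}$.

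Next I would verify that $\eta$ preserves $\mu^A$. For a Borel set $E\subseteq\mathcal{L}_A^{+\infty}$ and $T$ sufficiently small, the flow box $\mathcal{T}^T(E)$ is carried by $\varphi_{T_A}$ essentially onto $\mathcal{T}^T(\eta(E))$, up to a set which is negligible in the $T\to 0^+$ limit used to define $\mu^A$ in (\ref{eqn:mua}). Combining this with the flow invariance of $\mu$, one obtains $\mu^A(\eta^{-1}(E))=\mu^A(E)$. With these two facts, $\eta$ becomes a measure-preserving transformation on the finite measure space $(\Sigma^A,\mu^A)$ modulo a null set. The classical Poincar\'e recurrence theorem then implies: for any Borel $E\subseteq\Sigma^A$ with $\mu^A(E)>0$, $\mu^A$-a.e.\ point of $E$ returns to $E$ infinitely often under $\eta$. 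Fixing a countable basis $\{U_n\}_{n\in\mathbb{N}}$ of the topology on $\Sigma^A$ and applying Poincar\'e recurrence to each $U_n$ separately, the union of exceptional sets is still $\mu^A$-null, so $\mu^A$-a.e.\ $\rho\in\Sigma^A$ has the property that every neighborhood of $\rho$ is visited infinitely often by its forward $\eta$-orbit, i.e., $\rho\in\mathcal{R}_A^+$. The symmetric backward argument gives $\mathcal{R}_A^-$, and intersecting produces $\mu^A(\Sigma^A\setminus\mathcal{R}_A)=0$, which is equivalent to the stated identity.

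The main obstacle will be the rigorous verification that $\eta$ preserves $\mu^A$. The return time $T_A$ is not uniform on $\Sigma^A$, so the flow box $\mathcal{T}^T(E)$ gets distorted under $\varphi_{T_A}$, and one must either control this distortion uniformly in the $T\to 0^+$ limit defining $\mu^A$, or recast the argument through a suspension construction that directly converts flow invariance of $\mu$ into $\eta$-invariance of $\mu^A$. A secondary technical point is justifying disjointness of flow boxes in the infinite-return-time argument; this needs $\Omega$ to be first cut down so that $\mathcal{T}^T(\Omega)$ is diffeomorphic to $(-T,T)\times\Omega$, which can be arranged by a covering argument using the transversality of $\mathscr{L}$ to $H_p$.
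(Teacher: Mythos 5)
Your proposal is correct in outline, but it takes a genuinely different and longer route than the paper, and the hard step you flag is precisely the one the paper's argument is engineered to avoid. You propose to discretize the problem: establish that the first-return map $\eta$ is a $\mua$-measure-preserving transformation on $\sa$ and then invoke classical Poincar\'e recurrence for maps, plus a countable basis of neighborhoods. This is the Poincar\'e-section approach, and it does work, but proving $\eta_*\mua=\mua$ rigorously (the nonuniform return time, distortion of flow boxes, or a suspension construction) is a substantial piece of classical ergodic theory that would need to be carried out in full. The paper instead stays at the level of the continuous flow: it notes that $(S^*M,\mu,\varphi_t)$ is a measure-preserving system (automatic from $P(h)\phi_h=o(h)$, no extra work), applies Poincar\'e recurrence there to the thickened set $B_\delta=\bigcup_{|t|<3\delta}\varphi_t(B)$, and then transfers the conclusion down to $\mua$ using the identity $\mu|_{B_\delta}=\mua\,dt$ (from \cite[Lemma 6]{CGT}), arguing by contradiction that a positive-$\mua$-measure set of non-recurrent points in $B$ would force a positive-$\mu$-measure set of non-recurrent points in $B_\delta$. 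Finally, it also runs through a countable basis $\{B_k\}$ as you do. The upshot is that both arguments use the same transverse-measure structure, but the paper uses it only in the passive direction (restricting $\mu$ to a flow box) rather than the active one ($\eta$-invariance), which is considerably less to check. Your preliminary step on $\mua(\sa\setminus\mathcal{L}_A^{\pm\infty})=0$ is correct and its disjoint-translates argument is sound, though note it is not actually separate work: the paper's flow-level application of Poincar\'e recurrence subsumes it, since $\mu$-a.e.\ recurrence to $B_\delta$ already forces infinitely many returns to $B\subseteq\sa$.
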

\begin{proof}
Let $B\subseteq \sa$ be an open set. For $\delta>0$ sufficiently small, define
\[
B_{\delta}=\bigcup_{|t|<3 \delta} \varphi_t (B).
\]
Since $(S^*M,\mu,\varphi_t)$ forms a measure preserving system, the Poincar\'{e} Recurrence Theorem implies that for $\mu$-a.e. $\rho\in B_{\delta}$ there exists $t_n^\pm \to \pm \infty$ such that $\varphi_{t_n^\pm}(\rho) \in B_\delta$. Moreover by definition of $B_\delta$ there exists $s_n^\pm$ such that $|s_n^\pm-t_n^\pm | <2 \delta$ and $\varphi_{s_n^\pm}(\rho)\in B\subseteq \sa$. Therefore, for $\mu$-a.e. $\rho\in B_\delta$ we have
\begin{equation}\label{eqn:prob}
\bigcap_{T>0} \overline{\bigcup_{t\geq T} \varphi_t(\rho)\cap B} \not=\emptyset \qquad \text{and} \qquad \bigcap_{T>0} \overline{\bigcup_{t\geq T} \varphi_{-t}(\rho)\cap B} \not=\emptyset
\end{equation}
since the sets, $\overline{\bigcup_{t\geq T} \varphi_{\pm t}(\rho)\cap B}$ are non-empty, compact, and nested as $T$ increases. 

Next, we show that (\ref{eqn:prob}) also holds for $\mua$-a.e. point in $B \subseteq \sa$. For contradiction, suppose that there is a set $B'\subseteq B$ with $\mua(B')>0$ and for each $\rho\in B'$, there exists a $T>0$ such that
\[
\bigcup_{t\geq T} \varphi_t(\rho)\cap B =\emptyset \qquad \text{or} \qquad \bigcup_{t\geq T} \varphi_{-t}(\rho)\cap B =\emptyset.
\]
Similarly to \cite[Lemma 6]{CGT}, we have $\mu|_{_{B_\delta}}=\mua dt$. Therefore, extending $B'$ to $B'_{\delta/3}=\bigcup_{|t|\leq \delta} \varphi_t (B')$ we have that
\[
\mu(B'_{\delta/3})=2 \delta \cdot \mua(B')>0.
\]
However, this implies (\ref{eqn:prob}) does not hold on $B'_{\delta/3} \subseteq B_\delta$ which is a set of positive $\mu$ measure, which is a contradiction.

Finally, let $\{B_k\}$ be a countable basis for topology on $\sa$. For all $k$ there exists a $B'_k \subseteq B_k$ of full $\mua$ measure such that for all $\rho\in B'_k$ (\ref{eqn:prob}) holds (with $B$ replaced with $B_k$). Let $X_k:= B'_k \cup (\sa \setminus B_k)$. Following the same argument as in \cite[Lemma 15]{CG} we find that $\bigcap_k X_k \subseteq \mathcal{R}_A$. However, we note $\mua(X_k)=\mua(B'_k)+\mua(\sa\setminus B_k)= \mua(B_k)+\mua(\sa\setminus B_k)=\mua(\sa)$. So each $X_k$ has full measure and thus $\bigcap_k X_k$ has full measure too. Therefore $\mathcal{R}_A$ has full measure too, and we have
\[
\mua(\mathcal{R}_A)=\mua(\sa)
\]
as desired.
\end{proof}

\vspace{.2in}
\bibliographystyle{alpha}
\bibliography{ricekrispies}

\end{document}